\theoremstyle{plain}
\newtheorem{theorem}{Theorem}[section]
\newtheorem{lemma}[theorem]{Lemma}
\newtheorem{definition}[theorem]{Definition}
\newtheorem{proposition}[theorem]{Proposition}
\newtheorem{remark}[theorem]{Remark}
\numberwithin{equation}{section}
\newcommand{\ra}{\rightarrow}
\newcommand{\Z}{\mathbb{Z}}
\newcommand{\R}{\mathbb{R}}
\newcommand{\sheaf}[1]{\mathcal{#1}}
\newcommand{\psheaf}[1]{\mathcal{#1}_\bullet}
\newcommand{\pchi}[1]{\chi_\mathrm{par}(#1)}
\newcommand{\pH}[1]{\mathrm{par}\text{-}P_{#1}}
\newcommand{\rphilb}[1]{\mathrm{par}\text{-}p_{#1}}
\newcommand{\pdeg}[1]{\mathrm{p}\text{-}\deg(#1)}
\newcommand{\pslope}[1]{\mu_{\mathrm{par}}(#1)}
\newcommand{\id}{\mathbf{1}}
\newcommand{\Hom}[3][]{\mathrm{Hom}_{#1}(#2, #3)}
\newcommand{\struct}[1]{\ensuremath{\mathcal{O}_{#1}}}
\newcommand{\Coh}{\mathbf{Coh}(X)}
\newcommand{\chCoh}{\mathbf{Coh}^\mathrm{ch}(X)}
\newcommand{\nCoh}{\mathbf{Coh}^{n\text{-}\mathrm{reg}}(X)}
\newcommand{\KM}{A\text{-}\mathbf{mod}}
\newcommand{\nKM}{A\text{-}\mathbf{mod}^{n\text{-}\mathrm{reg}}}
\newcommand{\Sch}{\mathbf{Sch}}
\newcommand{\Set}{\mathbf{Set}}
\newcommand{\PSh}{\mathbf{Coh}^{\mathrm{par}}(X, D)}
\newcommand{\RPSh}{\mathbf{Coh}^{\mathrm{\R\text{-}fil}}(X, D)}
\newcommand{\PKM}{A\text{-}\mathbf{mod}^{\mathrm{par}}_D}
\newcommand{\RPKM}{A\text{-}\mathbf{mod}^{\mathrm{\R\text{-}fil}}_{D}}
\newcommand{\FKM}{A\text{-}\mathbf{mod}^{\mathrm{fil}}}
\newcommand{\CKM}{A\text{-}\mathbf{mod}^{\mathrm{ch}}}
\newcommand{\RepKAl}{\mathrm{Rep}(\KAl)}
\newcommand{\FPSh}[1]{\mathbf{Coh}^{\mathrm{par}}_{\mathrm{flat}}(#1, D)}
\newcommand{\FlKM}[1]{A\text{-}\mathbf{mod}^{\mathrm{fil}}_{\mathrm{flat}}(#1)}
\newcommand{\FPKM}[1]{A\text{-}\mathbf{mod}^{\mathrm{par}}_{\mathrm{flat}}(#1, D)}
\newcommand{\family}[1]{\mathsf{#1}}
\newcommand{\KAl}{\mathbf{A}_{\ell+1} \times \mathbf{K}}
\newcommand{\dv}{\mathbf{d}}
\newcommand{\dimv}[1]{\ensuremath{\mathbf{#1}}}
\newcommand{\mf}{\mathcal{M}}
\newcommand{\msp}{\mathbf{M}}
\newcommand{\mff}{\mathcal{M}^\mathrm{ss}_{\mathrm{fil}}}
\newcommand{\git}{\mathbin{
  \mathchoice{/\mkern-6mu/}
    {/\mkern-6mu/}
    {/\mkern-5mu/}
    {/\mkern-5mu/}}}
\newcommand{\mfs}{\mathscr{M}}
\newcommand{\mffs}{\mathscr{M}^\mathrm{ss}_{\mathrm{fil}}}
\newcommand{\rs}{\mathcal{R}}
\newcommand{\rsc}{\mathcal{R}_{\mathrm{rel}}}
\newcommand{\rsf}{\mathcal{R}_{\mathrm{fil}}}
\newcommand{\msf}{M^\mathrm{ss}_{\mathrm{fil}}}
\newcommand{\mstf}{M^\mathrm{s}_{\mathrm{fil}}}
\newcommand{\fp}[1]{\underline{#1}}
\begin{document}
\baselineskip=15.5pt

\title{Moduli of Parabolic sheaves and filtered Kronecker modules}
 \author{Sanjay~Amrutiya}
 \address{Department of Mathematics, IIT Gandhinagar,
 Near Village Palaj, Gandhinagar - 382355, India}
 \email{samrutiya@iitgn.ac.in}
 \author{Umesh~V.~Dubey}
 \address{Harish-Chandra Research Institute, HBNI,
Chhatnag Road, Jhunsi, Allahabad - 211019, India}	
 \email{umeshdubey@hri.res.in}
 \subjclass[2000]{Primary: 14D20}
 \keywords{Filtered Kronecker modules, Functorial moduli construction, Moduli of parabolic sheaves, Representations of quivers}
\date{}

\begin{abstract}
\noindent 
We give functorial moduli construction of pure parabolic sheaves, in the sense of \'{A}lvarez-C\'{o}nsul 
and A. King, using the moduli of filtered Kronecker modules which we introduced in our earlier work. We also 
use a version of S. G. Langton's result due to K. Yokogawa to deduce the projectivity of moduli of pure parabolic  sheaves of maximal dimension. As an application of functorial moduli construction, we can get the morphisms at the level of moduli 
stacks.
\end{abstract}
\maketitle
\section{Introduction}
C. S. Seshadri first introduced the notion of parabolic structure on vector bundles over a compact Riemann 
surface, and their moduli constructed by V. B. Mehta and C. S. Seshadri \cite{MS80}. The notion of parabolic 
bundles and several other related concepts and techniques have been generalized from curves to 
higher dimensional varieties by M. Maruyama and K. Yokogawa \cite{MY92}, and later for pure parabolic sheaves 
by M. Inaba \cite{In00}. There have been other moduli constructions given by U. Bhosle \cite{Bh92, Bh96} 
generalizing the notion of the parabolic vector bundles. 

In \cite{MY92, In00}, the authors consider the moduli of stable parabolic sheaves, which they constructed as 
an inductive limit of quasi-projective schemes due to lack of strong boundedness result. In the thesis \cite{Sc11},
D. Schl\"{u}ter  built the moduli construction for pure parabolic sheaves (with some modification in the parabolic 
Hilbert polynomial) by following the method of C. Simpson closely. 

The Biswas-Seshadri correspondence \cite{Bi97} relates parabolic vector bundles to orbifold bundles up to some 
choice of Kawamata cover, and this correspondence was later extended by N. Borne and A. Vistoli \cite{BV12}. 
In \cite{AD15}, we have given functorial moduli construction in the sense of \cite{AK07} of $\Gamma$-sheaves 
on higher dimensional varieties, for a finite group $\Gamma$, by introducing Kronecker McKay modules. 

In this article, we avoid Biswas-Seshadri correspondence and use the filtered object description of parabolic 
sheaves to give the construction of moduli similar to \'{A}lvarez-C\'{o}nsul and A. King \cite{AK07}. We introduce 
the notion of filtered Kronecker module to provide the functorial moduli construction of pure parabolic sheaves. 
We use the structure of moduli of filtered representations done in our earlier work \cite{AD20} for this purpose. 
The moduli construction of filtered representations was done by adopting the moduli  construction of A. King 
in the quasi-abelian category setup with the help of some results of Y. Andr\'e \cite{An09}.
We will now briefly describe the contents of each Section.

We recall the basic notions of parabolic sheaves and its 
reinterpretations in terms of filtered sheaves in Section \ref{prelim} following
\cite{MY92, In00}. We also recall the parabolic Hilbert polynomial which is 
used to give the Gieseker type notion of the stability condition on parabolic sheaves. 

We introduce the notion of parabolic filtered Kronecker module and describe the adjoint functors 
which give the embedding of regular parabolic sheaves in the category of parabolic filtered 
Kronecker modules. We also extend this embedding to the case of flat families 
of these objects in Section \ref{sec-embedding}.

The notion of stability on filtered Kronecker modules is introduced in Section
\ref{sec-ss-analysis} so that the embedding given in the Section \ref{sec-embedding}
takes the parabolic semistable sheaves to the semistable filtered Kronecker 
modules, once we fix the parabolic type.

In Section \ref{sec-construction}, we consider the notion of moduli functors for 
both parabolic sheaves of fixed type and moduli functor of filtered Kronecker 
module. We follow the method of \'{A}lvarez-C\'{o}nsul and King \cite{AK07} to 
give the functorial moduli construction in this Section and describe the closed 
points of moduli space (see Theorem \ref{main_thm:FMC}). More precisely, the
closed point of the coarse moduli scheme are in bijection with the $S$-equivalence 
classes of semistable parabolic sheaves of given parabolic type. This answer the 
question raised in \cite[p. 82]{Sc11} that how strictly semistable orbits in the 
GIT quotients are identified. We also prove the projectivity, using S. G. Langton's result, of moduli space of 
pure parabolic sheaves of fixed type $\tau_p$ such that $\mathrm{deg}(P) = \dim X$.

We extend the functorial relation between moduli functors at the level of moduli 
stacks of parabolic sheaves and filtered Kronecker modules in the last Section 
\ref{sec-stack-embedding}. 
\section{Preliminaries}\label{prelim}
Let $X$ be a projective scheme over an algebraically closed field $\Bbbk$ of arbitrary characteristic, 
let $\struct{X}(1)$ a very ample invertible sheaf on $X$ and $D$ an effective divisor on $X$. By a sheaf on 
$X$, we shall mean a coherent $\struct{X}$-module. 

\begin{definition}\label{MY-defn}\cite{MY92, In00}\rm{
Let $\sheaf{E}$ be a pure sheaf of dimension $d$ such that 
$\dim (D\cap \mathrm{Supp}(\sheaf{E})) < \dim \mathrm{Supp}(\sheaf{E})$.
A \emph{quasi-parabolic structure} on $\sheaf{E}$ with respect to $D$ is a filtration
$$
\sheaf{E} = F_1(\sheaf{E})\supset F_2(\sheaf{E})\supset \cdots \supset F_\ell(\sheaf{E})\supset 
F_{\ell+1}(\sheaf{E}) = \sheaf{E}(-D)\,,
$$
where $l$ is called the length of the filtration. We denote a quasi-parabolic structure on $\sheaf{E}$ by 
$(\sheaf{E}, F_\bullet(\sheaf{E}))$ and this pair is referred as a quasi-parabolic sheaf. 

A parabolic sheaf on $X$ is a quasi-parabolic sheaf $(\sheaf{E}, F_\bullet(\sheaf{E}))$ together with a  sequence of real 
numbers (called weights)
$0\leq \alpha_1 < \alpha_2 < \cdots < \alpha_\ell <  1$. 
}
\end{definition}

We shall often denote the parabolic sheaf $(\sheaf{E}, F_\bullet(\sheaf{E}), \alpha_\bullet)$ by 
$(\psheaf{E},  \alpha_\bullet)$ or simply by $\psheaf{E}$, when it causes no confusion.

Let $(\psheaf{F},  \beta_\bullet)$ and $(\psheaf{E},  \alpha_\bullet)$ be two parabolic sheaves. We say that an $\struct{X}$-module 
homomorphism $f\colon \sheaf{F}\ra \sheaf{E}$ is a parabolic homomorphism if 
$f(F_i(\sheaf{F}))\subseteq F_{j + 1}(\sheaf{E})$ whenever $\beta_i > \alpha_j$ (cf. \cite{MS80}). We denote by $\PSh$ the 
category of parabolic sheaves on $X$ with parabolic structure on $D$.

A parabolic sheaf $(\psheaf{F},  \beta_\bullet)$ is called a parabolic subsheaf of $(\psheaf{E},  \alpha_\bullet)$, if $\sheaf{F} \subseteq \sheaf{E}$
 and $F_i(\sheaf{F}) \subseteq F_{j + 1}(\sheaf{E})$  whenever $\beta_i > \alpha_j$.

Let $(\psheaf{E},  \alpha_\bullet)$ be a parabolic sheaf on $X$ and $\sheaf{F}$ be a non-zero subsheaf of $\sheaf{E}$ such 
that the quotient $\sheaf{E}/\sheaf{F}$ is pure of dimension $d$. Put $F_i(\sheaf{F}) := F_i(\sheaf{E}) \cap \sheaf{F}$. 
After discarding the equalities and by choosing the maximal index among equalities from the following filtration
$$
\sheaf{F} = F_1(\sheaf{F})\supseteq F_2(\sheaf{F})\supseteq \cdots \supseteq F_\ell(\sheaf{F})\supseteq 
F_{\ell+1}(\sheaf{F}) = \sheaf{F}(-D)\,,
$$
we get the (strict) filtration on $\sheaf{F}$. The resulting filtration along with the corresponding weights, say $\tilde{\alpha}_{\bullet}$,
will give the parabolic structure on the subsheaf $\sheaf{F}$, and we call the parabolic sheaf $(\psheaf{F},  \tilde{\alpha}_{\bullet})$ 
the induced parabolic subsheaf of $(\psheaf{E},  \alpha_\bullet)$.

Let $(\psheaf{E},  \alpha_\bullet)$ be a parabolic sheaf. For a real number $\alpha$, take an integer $i$ such that 
$\alpha_{i-1} < \alpha - \lfloor \alpha \rfloor \leq \alpha_i$, where $\lfloor \alpha \rfloor$ is the largest integer 
with $\alpha - \lfloor \alpha \rfloor \geq 0$, and then put 
$\sheaf{E}_\alpha = F_i(\sheaf{E})(- \lfloor \alpha \rfloor D)$. This way one obtain an 
$\R$-filtration $\sheaf{E}_*$ of sheaves on $X$ satisfying the following:
\begin{enumerate}
\item[(i)] $\sheaf{E}_0 = \sheaf{E}$,
\item[(ii)] For all $\alpha < \beta$, $\sheaf{E}_\beta$ is a subsheaf of $\sheaf{E}_\alpha$,
\item[(iii)] For sufficiently small $\epsilon$, $\sheaf{E}_{\alpha - \epsilon} = \sheaf{E}_\alpha$ for all $\alpha$,
\item[(iv)] For all $\alpha$, $\sheaf{E}_{\alpha+1} = \sheaf{E}_\alpha(-D)$.
\end{enumerate}

Conversely, given an $\R$-filtration $\sheaf{E}_*$ of sheaves on $X$ satisfying the above properties, 
then the sheaf $\sheaf{E} := \sheaf{E}_0$ has a unique parabolic structure giving the $\R$-filtration 
$\sheaf{E}_*$ \cite{MY92}.

\begin{remark}\cite{Yo95, Bo07}\rm{
Let us regard $\R$ as an index category, whose objects are real numbers and a unique morphism 
$\beta \ra \alpha$ exists, by definition, precisely when $\beta \leq \alpha$. Consider a functor 
$E_*\colon \R^\mathrm{op}\ra \Coh$, where $\Coh$ is the category of coherent $\struct{X}$-modules. 
If $\alpha \in \R$, then we simply write $E_\alpha$ for $E_*(\alpha)$ and $i_{\alpha, \beta}$ for the morphism 
$E_\alpha \ra E_\beta$ given by the functor $E_*$ when $\alpha \geq \beta$. Given $E_*$ as above and 
$\gamma \in \R$, one can define a new functor $E[\gamma]_* \colon \R^\mathrm{op}\ra \Coh$ as follows:
$$
E[\gamma]_\alpha := E_{\alpha + \gamma},
$$
together with obvious definition on morphisms. If $\gamma \geq 0$, then there is a natural transformation 
$E[\gamma]_* \ra E_*$. The functor $E_*$ is called a \emph{$\R$-parabolic sheaf} if it comes with the following:
\begin{enumerate}
\item each $E_\alpha$ is pure satisfying 
$\dim (D\cap \mathrm{Supp}(E_\alpha)) < \dim \mathrm{Supp}(E_\alpha)$,
\item there is an isomorphism of functors 
$j\colon E_*\otimes \struct{X}(-D)\ra E[1]_*$ such that the diagram
\[
\xymatrix{
E_*\otimes \struct{X}(-D)\ar[r]^-j \ar[rd]_{k_{X, E_*}} & E[1]_* \ar[d] \\
& E_*
}
\]
commutes, where $k_{X, E_*} = \id_{E_*} \otimes i_D$.
\item there is a finite sequence of real numbers $0\leq \alpha_1 < \alpha_2 < \cdots < \alpha_\ell <  1$
such that if $\alpha \in (\alpha_{i-1}, \alpha_i]$, then $i_{\alpha_i, \alpha}\colon E_{\alpha_i}\ra E_\alpha$
is the identity map.
\end{enumerate} 

A homomorphism $f_* : F_* \ra E_*$ between $\R$-parabolic sheaves is nothing but a natural transformation between these two functors.
This is also equivalent to $\struct{X}$-module 
homomorphism $f\colon F_0 \ra E_0$ such that $f(F_\alpha)\subseteq E_\alpha$ for all real numbers $\alpha$. 
We shall denote by $\RPSh$ the category of $\R$-parabolic sheaves on $X$.

It is clear that Maruyama-Yokogawa defintions \cite{MY92} of parabolic sheaf on $X$ (Definition \ref{MY-defn}) 
is equivalent to the notion of $\R$-parabolic sheaf.

}
\end{remark}

\subsection{The parabolic Hilbert polynomial and semistability} 
The parabolic Euler characteristic of a parabolic sheaf $\psheaf{E}$ is defined as
$$
\pchi{\psheaf{E}} := \chi(\sheaf{E}(-D)) + \sum_{i = 1}^\ell \alpha_i \chi(G_i),
$$
where $G_i := F_i(\sheaf{E})/F_{i+1}(\sheaf{E})$.

The parabolic Hilbert polynomial of $\psheaf{E}$ is the polynomial with rational coefficients given by
$$
\pH{\psheaf{E}}(m) := \pchi{\psheaf{E}(m)},
$$
where $\psheaf{E}(m) := \psheaf{E}\otimes \struct{X}(m)$.

Recall that if $\sheaf{E}$ has Hilbert polynomial $P$ and $\sheaf{E}/F_i(\sheaf{E})$ has Hilbert 
polynomial $P_i$ for $2\leq i\leq \ell+1$, then we have
\[
\begin{array}{lll}
\pH{\psheaf{E}}(m) & = P_{\sheaf{E}(-D)}(m) + \sum_{i=1}^\ell \alpha_i P_{G_i}(m)\\
&\\
& = \alpha_{\ell+1}P_{F_{\ell+1}(\sheaf{E})}(m) + \sum_{i=1}^\ell \alpha_i [(P_{F_i(\sheaf{E})}(m) - P_{F_{i+1}(\sheaf{E})}(m)]\\
&\\
& = \alpha_1 P_{F_1(\sheaf{E})}(m) + \sum_{i=2}^{\ell+1} (\alpha_i - \alpha_{i-1}) P_{F_i(\sheaf{E})}(m)\\
&\\
& = \alpha_{\ell+1} P_{\sheaf{E}}(m) + \sum_{i=2}^{\ell+1} (\alpha_i - \alpha_{i-1}) [P_{F_i(\sheaf{E})}(m) - P_{\sheaf{E}}(m)]\\
&\\
& = P_{\sheaf{E}}(m) - \sum_{i=2}^{\ell+1} \varepsilon_i [P_{\sheaf{E}}(m) - P_{F_i(\sheaf{E})}(m)]\\
&\\
& = P(m) - \sum_{i=2}^{\ell+1} \varepsilon_i P_i(m)
\end{array}
\]
where $\varepsilon_i = \alpha_i - \alpha_{i-1}$ and $\alpha_{\ell+1}:= 1$.

We also note that
\[
\begin{array}{lll}
\pchi{\psheaf{E}} & = \chi(\sheaf{E}(-D)) + \sum_{i = 1}^\ell \alpha_i [\chi(F_i(\sheaf{E})) - \chi(F_{i+1}(\sheaf{E}))]\\
&\\
& = \alpha_{\ell+1} \chi(\sheaf{E}(-D)) + \sum_{i = 1}^\ell \alpha_i [\chi(F_i(\sheaf{E})) - \chi(F_{i+1}(\sheaf{E}))]\\
&\\
& = \alpha_1\chi(F_1(\sheaf{E})) + \sum_{i = 2}^{\ell+1} (\alpha_i - \alpha_{i-1}) \chi(F_i(\sheaf{E}))\\
&\\
& = \displaystyle \int_0^1 \chi(\sheaf{E}_\alpha)d\alpha
\end{array}
\]

The reduced parabolic Hilbert polynomial $\rphilb{\psheaf{E}}$ of a parabolic sheaf $\psheaf{E}$ of 
dimension $d$ is defined by
$$
\rphilb{\psheaf{E}}(m) := \frac{\pH{\psheaf{E}}(m)}{a_d(\sheaf{E})}\,,
$$
where $a_d(\sheaf{E})$ is the leading coefficient of the Hilbert polynomial of $\sheaf{E}$.

Recall that the parabolic degree
$$
\pdeg{\psheaf{E}} = a_{d-1}(\sheaf{E}(-D)) + \sum_{i=1}^\ell \alpha_i a_{d-1}(G_i)\,.
$$

The parabolic slope of $\psheaf{E}$ is defined as 
$$
\pslope{\psheaf{E}}:= \frac{\pdeg{\psheaf{E}}}{a_d(\sheaf{E})}\,.
$$
It is easy to see that
$$
\pslope{\psheaf{E}} = \int_0^1 \mu(\sheaf{E}_\alpha) d\alpha\,,
$$
where $\mu(\sheaf{E}_\alpha) = \frac{a_{d-1}(\sheaf{E}_\alpha)}{a_d(\sheaf{E}_\alpha)}$.

\begin{definition}\rm{
We say that a parabolic sheaf $\psheaf{E}$ is parabolic semistable  if for every parabolic subsheaf 
$\psheaf{F}$ of $\psheaf{E}$, we have
\begin{equation}\label{eq:pss-ineq}
\rphilb{\psheaf{F}} \leq \rphilb{\psheaf{E}}\,. 
\end{equation}
We say that a parabolic sheaf $\psheaf{E}$ is parabolic stable  if the inequality \ref{eq:pss-ineq} is strict 
for every proper parabolic subsheaf $\psheaf{F}$ of $\psheaf{E}$.
}
\end{definition}

\begin{remark}\cite[p. 121]{In00}\rm{
Recall that a subsheaf $\sheaf{F}$ of a pure sheaf $\sheaf{E}$ of dimension $d$ is called saturated if 
the quotient sheaf $\sheaf{E}/\sheaf{F}$ is pure of dimension $d$. To check the stability of $\psheaf{E}$, 
it suffices to consider the saturated subsheaves of a parabolic sheaf $\psheaf{E}$ with their induced 
parabolic structures. To see this, let $\psheaf{F}'$ be any parabolic subsheaf of $\psheaf{E}$ and 
$\sheaf{F}$ be the subsheaf of $\sheaf{E}$ containing $\sheaf{F}'$ such that $\dim(\sheaf{F}/\sheaf{F}') < d$ 
and $\sheaf{E}/\sheaf{F}$ is pure of dimension $d$. Let $\tilde{\psheaf{F}}$ be the induced parabolic 
subsheaf of $\psheaf{E}$. Then for sufficiently large integer $m$, we have
\[
\begin{array}{lcl}
\rphilb{\psheaf{F}'}(m) & = \frac{1}{a_d(\sheaf{F}')}\displaystyle \int_0^1 \chi(\sheaf{F}'_\alpha(m))d\alpha \\
& \leq \frac{1}{a_d(\sheaf{F})} \displaystyle \int_0^1 \chi(\sheaf{F}_\alpha(m))d\alpha
& = \rphilb{\sheaf{F}}(m)
\end{array}
\]
}
\end{remark}
\section{A functorial embedding}\label{sec-embedding}
In this section, we first recall an embbeding of the category of regular sheaves
into the category of representations of a Kronecker quiver \cite{AK07}. Then, 
we will extend such result to parabolic case.

For integers $m > n$, let $T := \struct{X}(-n)\oplus \struct{X}(-m)$, and 
\[
A := \begin{pmatrix} \Bbbk & H \\
\empty \\
 0 & \Bbbk \end{pmatrix}
\]
be the path algebra of the Kronecker quiver $K: 1 \stackrel{H}{\longrightarrow} 2$, 
where $H := H^0(\struct{X}(m-n))$ is the multiplicity space for arrows.
Recall that a representation $M$ of $K$ can be described as the decomposition
$M = M_1 \oplus M_2$ together with a linear map 
$\alpha\colon M_1\otimes_{\Bbbk} H\ra M_2$.
Then, $M$ can also be considered as an $A$-module, which will be referred as \emph{Kronecker module}.
For any coherent sheaf $\sheaf{E}$, we have 
$\Hom[X]{T}{\sheaf{E}} = H^0(\sheaf{E}(n))\oplus H^0(\sheaf{E}(m))$ together with
the multiplication map 
$\alpha_{\sheaf{E}}\colon H^0(\sheaf{E}(n))\otimes H \ra H^0(\sheaf{E}(m))$.
Thus, we obtained a functor 
$$
\Phi := \Hom[X]{T}{-} \colon \Coh \ra \KM
$$
given by $\sheaf{E} \mapsto \Hom[X]{T}{\sheaf{E}}$. The functor $\Phi$ has a left adjoint
$$
\Phi^\vee := -\otimes_A T : \KM \ra \Coh
$$
Let $\varepsilon \colon \Phi^\vee \circ \Phi \ra \id_\Coh$ and 
$\eta \colon \id_{\KM} \ra \Phi \circ \Phi^\vee$ be the \emph{co-unit} and 
\emph{unit} of the adjunction between $\Phi$ and $\Phi^\vee$, respectively. 
Let $\nCoh$ be the full subcategory of $\Coh$ consisting of $n$-regular sheaves 
on $X$. Let $\nKM$ be the full subcategory of $\KM$ consisting of $A$-modules $M$ for 
which $\eta_M$ is an isomorphism and $\Phi^\vee(M)$ is $n$-regular.

\begin{proposition}\cite[Theorem 3.4]{AK07}\label{prop-usual-embed}
If $\struct{X}(m-n)$ is regular, then we have an embedding
$$
\Phi \colon \nCoh \ra \KM \,.
$$
Further, $\Phi$ gives an equivalence between  $\nCoh$ and $\nKM$.
\end{proposition}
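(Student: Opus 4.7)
The plan is classical adjoint-functor formalism: since $\Phi$ admits the left adjoint $\Phi^\vee$, to show that $\Phi$ is fully faithful when restricted to $\nCoh$ it suffices to prove that the counit $\varepsilon_{\sheaf{E}}\colon \Phi^\vee\Phi(\sheaf{E}) \to \sheaf{E}$ is an isomorphism for every $n$-regular sheaf $\sheaf{E}$. Granting this, the equivalence with $\nKM$ is essentially formal, since by definition $\nKM$ is cut out by the conditions that $\eta_M$ be an isomorphism and that $\Phi^\vee(M)$ be $n$-regular.

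The first concrete step is to unwind $\Phi^\vee\Phi(\sheaf{E})$. Using the standard projective presentation of a Kronecker module over $A$ and tensoring with $T$, one obtains
\[
H^0(\sheaf{E}(n)) \otimes_\Bbbk H \otimes_\Bbbk \struct{X}(-m) \xrightarrow{\delta} \bigl(H^0(\sheaf{E}(n)) \otimes_\Bbbk \struct{X}(-n)\bigr) \oplus \bigl(H^0(\sheaf{E}(m)) \otimes_\Bbbk \struct{X}(-m)\bigr) \twoheadrightarrow \Phi^\vee\Phi(\sheaf{E}),
\]
where $\delta$ is assembled from the multiplication $\mu\colon H \otimes \struct{X}(-m) \to \struct{X}(-n)$ and the structure map $\alpha_{\sheaf{E}}$ of $\Phi(\sheaf{E})$, and $\varepsilon_{\sheaf{E}}$ is induced on the middle term by the two evaluation maps $\mathrm{ev}_n$ and $\mathrm{ev}_m$ into $\sheaf{E}$. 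The $n$-regularity of $\sheaf{E}$ forces $\mathrm{ev}_n$ to be surjective, so $\varepsilon_{\sheaf{E}}$ is already surjective. Castelnuovo--Mumford regularity gives in addition that $\sheaf{E}$ is $m$-regular and that $H^0(\sheaf{E}(n)) \otimes H \to H^0(\sheaf{E}(m))$ is surjective; the hypothesis that $\struct{X}(m-n)$ is regular yields the surjectivity of $\mu$, since a regular sheaf is globally generated.

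The heart of the argument is to identify the kernel. I would chase a commutative diagram with exact rows to match $\sheaf{K} := \ker(\mathrm{ev}_n)$ with the image of $\delta$ modulo what is already captured by $\mathrm{ev}_m$: because $\sheaf{K}$ fits in a short exact sequence with $m$-regular outer terms, it is itself $m$-regular and hence $\sheaf{K}(m)$ is globally generated, while every section of $\sheaf{K}(m)$ lifts via $\mu$ and the multiplication on global sections to an element of $H^0(\sheaf{E}(n)) \otimes H$, so $\ker \varepsilon_{\sheaf{E}} = 0$. Once the counit is an isomorphism on $\nCoh$, fully faithfulness of $\Phi$ follows from the adjunction identity $\Hom[X]{\sheaf{E}}{\sheaf{F}} \cong \Hom[A]{\Phi(\sheaf{E})}{\Phi(\sheaf{F})}$.

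For the equivalence with $\nKM$, the triangle identity $\Phi(\varepsilon_{\sheaf{E}}) \circ \eta_{\Phi(\sheaf{E})} = \id$ together with $\varepsilon_{\sheaf{E}}$ being an isomorphism forces $\eta_{\Phi(\sheaf{E})}$ to be an isomorphism, placing $\Phi(\sheaf{E})$ inside $\nKM$; conversely, for $M \in \nKM$ the very definition gives $\eta_M$ iso and $\Phi^\vee(M)$ $n$-regular, so $\Phi^\vee$ restricts to the quasi-inverse. I expect the main obstacle to be the kernel computation in the previous paragraph, where the interplay between the $n$-regularity of $\sheaf{E}$ and the regularity of $\struct{X}(m-n)$ must be exploited carefully.
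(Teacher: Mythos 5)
The paper gives no proof of this proposition---it is quoted directly from \cite[Theorem 3.4]{AK07}---so the only comparison available is with the argument in that reference, which your sketch reconstructs faithfully: reduce to the counit $\varepsilon_{\sheaf{E}}$ being an isomorphism, present $\Phi^\vee\Phi(\sheaf{E})$ via the standard projective resolution of the Kronecker module, obtain surjectivity from global generation, obtain injectivity by identifying $\ker(\mathrm{ev}_n)$ with the relevant part of the image of $\delta$ via its $m$-regularity, and then deduce the equivalence with $\nKM$ formally from the triangle identities. The one loose phrase is that a sheaf appearing as the kernel in a short exact sequence with $m$-regular outer terms is not automatically $m$-regular---one also needs surjectivity of $H^0(\sheaf{E}(n))\otimes H^0(\struct{X}(k))\to H^0(\sheaf{E}(n+k))$ for $k\geq 0$, which here is supplied by the $n$-regularity of $\sheaf{E}$---but this is the standard Castelnuovo--Mumford lemma and the rest of your argument is sound.
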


Consider the linear quiver $\mathbf{A}_{\ell+1}$:
$$
1 \longleftarrow 2 \longleftarrow \cdots \longleftarrow \ell \longleftarrow \ell+1
$$
Let $\CKM$ (respectively, $\chCoh$) denote the category whose objects are representations of a 
linear quiver $\mathbf{A}_{\ell+1}$ in the category of Kronecker modules (respectively, coherent sheaves on $X$).
More precisely, an object $M_\bullet$ (respectively, $E_\bullet$) of $\CKM$ (respectively, $\chCoh$) 
consists of a family of Kronecker modules $M_i$ (respectively, coherent sheaves $E_i$) indexed by the 
vertices of $\mathbf{A}_{\ell+1}$ together with a family of morphisms $M_{i+1}\ra M_i$ (respectively, $E_{i+1}\ra E_i)$
indexed by the arrows in $\mathbf{A}_{\ell+1}$. The morphisms are defined in the usual sense.

Let $\FKM$ be the full subcategory of $\CKM$ consisting of those representations of $\mathbf{A}_{\ell+1}$ 
for which each morphism $M_{i+1}\ra M_i$ is injective. The objects of the category $\FKM$ are called  
\emph{filtered Kronecker modules}.

\begin{remark}\rm{
We get a functor
$
\Phi_{\mathrm{ch}} \colon \chCoh \ra \CKM
$
defined by $\Phi_{\mathrm{ch}}(E_\bullet):= \Phi (E_\bullet)$.
The functor $\Phi_{\mathrm{ch}}$ has a left adjoint 
$$
\Phi^\vee_{\mathrm{ch}} \colon \CKM \ra \chCoh\,.
$$
We also denote by $\varepsilon$ and $\eta$ the \emph{co-unit} and \emph{unit} 
of the adjunction between $\Phi_{\mathrm{ch}}$ and $\Phi^\vee_{\mathrm{ch}}$, respectively.
}
\end{remark}

\begin{definition}\label{PFKM-defn}\rm{
A quasi-parabolic Kronecker module $M_\bullet$ of length $\ell$ is a filtration 
$$
M_\bullet : M = M_1 \supset M_2 \supset \cdots \supset M_\ell \supset 
M_{\ell+1} = M(-D)
$$
in $\nKM$, where $M(-D):= \Phi(\Phi^\vee(M)(-D))$.

A parabolic filtered Kronecker module is a pair $(M_\bullet, \alpha_\bullet)$ consisting of a 
quasi-parabolic filtered Kronecker module $M_\bullet$ and a sequence of real numbers 
$\alpha_\bullet: 0\leq \alpha_1 < \alpha_2 < \cdots < \alpha_\ell <  1$ (called weights associated to a filtration). 
We will denote  the parabolic filtered Kronecker module $(M_{\bullet}, \alpha_\bullet)$ simply by $M_\bullet$,
when it causes no confusion.
}
\end{definition}

Note that any homomorphism $f: N \to M$ of Kronecker modules in $\nKM$ induces a homomorphism 
$f(-D) :=  \Phi(\Phi^\vee(f)(-D))\colon N(-D)\ra M(-D)$, using the functoriality. A homomorphism 
$f_{\bullet} : (N_{\bullet}, \beta_\bullet) \to (M_{\bullet}, \alpha_\bullet)$ of parabolic filtered Kronecker 
modules is defined as a homomorphism 
$f: N \to M$ of Kronecker modules satisfying $f(N_i) \subset M_{j + 1}$ whenever $\beta_i > \alpha_j$. 
We denote by $\PKM$ the category of all parabolic filtered Kronecker modules.

\begin{remark}\rm{
If weights in the parabolic filtered Kronecker modules $N_{\bullet}$ and $M_{\bullet}$ are same, then a 
homomorphism $f_{\bullet} : N_{\bullet} \to M_{\bullet}$ is same as a homomorphism $f: N \to M$ which 
preserves the quasi-parabolic structures, i.e. $f (N_i) \subset M_i$, for $2 \leq i \leq \ell+1$. In particular, 
there is a faithful functor from the category of parabolic Kronecker modules of length $\ell$ having fixed 
weights to the category $\CKM$.
}
\end{remark}

A parabolic sheaf $\psheaf{E}$ is called $n$-regular if each $F_i(\sheaf{E})$ is Castelnuovo-Mumford 
$n$-regular in the usual sense for all $i = 1, 2, \dots, \ell + 1$. We denote by $\PSh^{\mathrm{n\text{-}reg}}$
the category of $n$-regular parabolic sheaves on $X$ with the parabolic structure over $D$.

\emph{Now onwards in this section, we will assume that $\struct{X}(m-n)$ is regular.}

Let $\psheaf{E}$ be an $n$-regular parabolic sheaf. Then, we have a filtration
\begin{equation}\label{eq-par-fil}
\sheaf{E} = F_1(\sheaf{E})\supset F_2(\sheaf{E})\supset \cdots \supset F_\ell(\sheaf{E})\supset 
F_{\ell+1}(\sheaf{E}) = \sheaf{E}(-D)\,,
\end{equation}
together with the weights $0\leq \alpha_1 < \alpha_2 < \cdots < \alpha_\ell <  1$. 

Since $\struct{X}(m-n)$ is regular, we have $\Phi^\vee\Phi(\sheaf{E}) \cong \sheaf{E}$, and hence
$$\Phi(\sheaf{E})(-D) := \Phi(\Phi^\vee(\Phi(\sheaf{E}))(-D)) = \Phi(\sheaf{E}(-D))\,.$$
We can define a quasi-parabolic structure on $\Phi(\psheaf{E})$ as follows by applying the functor $\Phi$ to
\eqref{eq-par-fil}:
\begin{equation}\label{eq-Phi-fil}
\Phi(\sheaf{E}) = \Phi(F_1(\sheaf{E}))\supset \Phi(F_2(\sheaf{E}))\supset \cdots \supset \Phi(F_\ell(\sheaf{E}))\supset 
\Phi(F_{\ell+1}(\sheaf{E})) = \Phi(\sheaf{E}(-D))\,,
\end{equation}
where $\Phi(\sheaf{E})(-D) = \Phi(\sheaf{E}(-D))$. By assigning the same weights $\alpha_\bullet$ to the filtration \eqref{eq-Phi-fil}, 
we get a structure of parabolic filtered Kronecker module on $\Phi(\sheaf{E})$, which we denote by $\Psi(\psheaf{E})$. 
Now, if $f\colon \psheaf{F}\ra \psheaf{E}$ is a morphism of parabolic sheaves, then 
$\Psi(f) = \Phi(f) \colon \Phi(\sheaf{F})\ra \Phi(\sheaf{E})$ is morphism of corresponding parabolic 
Kronecker modules. To see this, if $\beta_i > \alpha_j$, then $f$ being a morphism of parabolic sheaves,
we have $f(F_i(\sheaf{F})) \subseteq F_{j+1}(\sheaf{E})$. Since $\Phi$ preserves monomorphism, we have
$\Psi(f)(\Phi(F_i(\sheaf{F})) \subseteq \Phi(F_{j+1}(\sheaf{E}))$. Therefore, we get a functor
$\Psi \colon \PSh^{\mathrm{n\text{-}reg}} \ra \PKM$. 

\begin{proposition}\label{prop-psh-emb}
The functor $\Psi \colon \PSh^{\mathrm{n\text{-}reg}} \ra \PKM$ is an embedding of categories.
\end{proposition}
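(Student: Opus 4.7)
The plan is to show $\Psi$ is fully faithful, which is the sense of ``embedding of categories'' used in Proposition~\ref{prop-usual-embed}. Since $\Psi$ acts on morphisms by $\Phi$ (recording the additional filtration and weight data only on objects), both faithfulness and fullness will reduce to the equivalence $\Phi\colon\nCoh\xrightarrow{\sim}\nKM$ of Proposition~\ref{prop-usual-embed}, applied not only to the underlying sheaves but also to each term $F_i(\sheaf{E})$ of the parabolic filtration (each of which is $n$-regular by the definition of an $n$-regular parabolic sheaf).

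Faithfulness is immediate: two parabolic homomorphisms $f_1, f_2\colon \psheaf{F}\to\psheaf{E}$ with $\Psi(f_1)=\Psi(f_2)$ satisfy $\Phi(f_1)=\Phi(f_2)$ as Kronecker-module homomorphisms, so $f_1=f_2$ by the faithfulness of $\Phi$ on $n$-regular sheaves.

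For fullness, I would begin with a morphism $h_\bullet\colon \Psi(\psheaf{F})\to\Psi(\psheaf{E})$ of parabolic filtered Kronecker modules. Its underlying Kronecker-module homomorphism $h\colon \Phi(\sheaf{F})\to\Phi(\sheaf{E})$ is a morphism in $\nKM$, so Proposition~\ref{prop-usual-embed} produces a unique $f\colon \sheaf{F}\to\sheaf{E}$ with $\Phi(f)=h$. It then remains to check that $f$ respects the parabolic structures. Fix $i,j$ with $\beta_i>\alpha_j$. The parabolic hypothesis on $h$ says $h(\Phi(F_i(\sheaf{F})))\subseteq \Phi(F_{j+1}(\sheaf{E}))$, i.e.\ the composition $\Phi(F_i(\sheaf{F}))\hookrightarrow \Phi(\sheaf{F})\xrightarrow{h} \Phi(\sheaf{E})$ factors through the monomorphism $\Phi(F_{j+1}(\sheaf{E}))\hookrightarrow\Phi(\sheaf{E})$ via some $h'\colon \Phi(F_i(\sheaf{F}))\to\Phi(F_{j+1}(\sheaf{E}))$. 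Since $F_i(\sheaf{F})$ and $F_{j+1}(\sheaf{E})$ both lie in $\nCoh$, Proposition~\ref{prop-usual-embed} lifts $h'$ uniquely to a morphism $f'\colon F_i(\sheaf{F})\to F_{j+1}(\sheaf{E})$, and the faithfulness of $\Phi$ promotes the factorization identity over $\Phi$ back to the original categories, yielding $f(F_i(\sheaf{F}))\subseteq F_{j+1}(\sheaf{E})$.

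The main (modest) subtlety is this last transfer: carrying the ``factors through a subobject'' statement from $\nKM$ back to $\nCoh$. What makes this go through is the $n$-regularity of \emph{every} term of the parabolic filtration of both $\psheaf{F}$ and $\psheaf{E}$, which places each subobject $F_i(\sheaf{F})$ and $F_{j+1}(\sheaf{E})$ inside the range of the equivalence of Proposition~\ref{prop-usual-embed}; without this uniform regularity the above lifting step would not be available.
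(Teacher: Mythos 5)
Your proof is correct, but it is organized differently from the paper's. You verify full faithfulness directly: faithfulness from the faithfulness of $\Phi$ on $\nCoh$, and fullness by first lifting the underlying Kronecker-module map $h$ to a sheaf map $f$ via Proposition~\ref{prop-usual-embed}, then transporting each factorization $h(\Phi(F_i(\sheaf{F})))\subseteq \Phi(F_{j+1}(\sheaf{E}))$ back to $f(F_i(\sheaf{F}))\subseteq F_{j+1}(\sheaf{E})$ using the equivalence applied to the (uniformly $n$-regular) filtration terms and the faithfulness of $\Phi$ to identify the two composites $F_i(\sheaf{F})\ra\sheaf{E}$. The paper instead isolates a full subcategory $\mathcal{C}\subseteq\PKM$ of modules whose image under $\Phi^\vee$ is still a filtration, constructs an explicit quasi-inverse $\Psi^\vee\colon\mathcal{C}\ra\PSh^{\mathrm{n\text{-}reg}}$, and deduces that $\Psi$ is an equivalence onto a subcategory from the unit and co-unit being isomorphisms. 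Your route is more elementary and makes visibly explicit why $n$-regularity of \emph{every} term $F_i$ is needed (so that each subobject lies in the range of the equivalence $\nCoh\simeq\nKM$); the paper's route does more work up front but the functor $\Psi^\vee$ it produces is not incidental --- it is reused verbatim in the flat-family statement (Proposition~\ref{s2-prop-eqiv-familiy}) and in the stratification argument (Proposition~\ref{s2-prop-stratification}), so a purely full-faithfulness proof would leave those later constructions to be set up separately.
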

\begin{proof}
Let $\mathcal{C}$ be a full subcategory of $\PKM$ consisting of those parabolic filtered Kronecker modules
$M_\bullet$ for which we have a filtration
\begin{equation}\label{Psi-chek}
\Phi^\vee(M) \supset \Phi^\vee(M_2) \supset \cdots \Phi^\vee(M_\ell) \supset \Phi^\vee(M(-D))\,.
\end{equation}
Note that for an object $M$ of $\nKM$, we have 
$$
\Phi^\vee(M(-D)) = \Phi^\vee(\Phi((\Phi^\vee M)(-D))) = (\Phi^\vee M)(-D)
$$
For an object $M_\bullet$ of $\mathcal{C}$, by assigning the same weights as in $M_\bullet$ to the filtration 
\eqref{Psi-chek}, we get a parabolic structure on $\Phi^\vee(M)$. Let $\Psi^\vee(M_\bullet)$ denote the 
parabolic sheaf corresponding to an object $M_\bullet$ of $\mathcal{C}$. By definition of $\PKM$, the
parabolic sheaf $\Psi^\vee(M_\bullet)$ is $n$-regular. Hence, we get a functor
$\Psi^\vee \colon \mathcal{C}\ra \PSh^{\mathrm{n\text{-}reg}}$.

Let $\sheaf{E}_\bullet$ be an $n$-regular parabolic sheaf. Using Proposition \ref{prop-usual-embed},
we can check that $\Psi(\sheaf{E}_\bullet)$ is an object of $\mathcal{C}$ and 
$\Psi^\vee (\Psi(\sheaf{E}_\bullet)) \cong \sheaf{E}_\bullet$.
In other words, the functor $\Psi$ factors through the 
category $\mathcal{C}$. By the above construction, we have a left adjoint functor 
$\Psi^{\vee}: \mathcal{C} \ra \PSh^\mathrm{n\text{-}reg}$. Since both unit and co-unit of the adjunction are 
isomorphisms, we get that the functor $\Psi$ factors through an equivalence to a subcategory of $\PKM$.
\end{proof}

\begin{remark}\rm{
We can define the parabolic filtered Kronecker module using $\R$-filtration following the notion of 
$\R$-parabolic sheaves from Section \ref{prelim} and the functorial emebdding. We shall denote this 
category by $\RPKM$ whose objects and morphisms are described as follows:
  
\begin{description}
\item[Objects]  All functors $M_*\colon \R^\mathrm{op}\ra \nKM$ satisfying the 
following:
\begin{enumerate}
\item[(a)] there is an isomorphism of functors 
$j_{A, M_*} \colon M_*(-D)\ra M[1]_*$ such that the diagram
\[
\xymatrix{
M_*(-D)\ar[r]^-{j_{A, M_*}} \ar[rd]_{k_{A, M_*}} & M[1]_* \ar[d]^{i_{M_*}^{[0, 1]}} \\
& M_*
}
\]
commutes, where $M_*(-D) := \Phi(\Phi^\vee(M_*)(-D))$ and 
$$k_{A, M_*} = \eta_{M_*} \circ \Phi(k_{X, \Phi^\vee(M_*)}) \colon M_*(-D)\ra M_* \,.$$
\item[(b)] there is a finite sequence of real numbers $0\leq \alpha_1 < \alpha_2 < \cdots < \alpha_\ell <  1$
such that if $\alpha \in (\alpha_{i-1}, \alpha_i]$, then $i_{\alpha_i, \alpha}\colon M_{\alpha_i}\ra M_\alpha$
is the identity map.
\end{enumerate}

\item[Morphisms] A morphism between two objects $(N_*, j_{A, N_*})$ and $(M_*, j_{A, M_*})$ is a functor
$h_*\colon N_*\ra M_*$ making the following diagram
\[
\xymatrix{
**[l] N_*(-D) \ar[r]^{j_{A, N_*}} \ar[d]_{h_*(-D)} & **[r] N_*[1] \ar[d]^{h_*[1]} \\
**[l] M_*(-D) \ar[r]_{j_{A, M_*}} & **[r] M_*[1]
}
\]
commutative, where $h_*(-D) := \Phi(\Phi^\vee(h_*)(-D))$.
\end{description}
The objects of $\RPKM$ are called $\R$-parabolic filtered Kronecker modules.
Similar to the case of parabolic sheaves, we can check that the category $\RPKM$  is equivalent to the 
category of parabolic filtered Kronecker modules.
}
\end{remark}

\subsection{Flat families and stratification}

Let $S$ be a scheme. We say that $(\sheaf{E}, F_\bullet(\sheaf{E}),\alpha_\bullet)$ is a flat family over $S$
of parabolic sheaves on $X$ with the parabolic structure over $D$ if $\sheaf{E}$ is a $S$-flat coherent 
$\struct{X_S}$-module such that for every geometric point $s$ of $S$, $\sheaf{E}_s$ is of pure dimension 
$d$, $\dim (D_s \cap \mathrm{Supp} (\sheaf{E}_s)) < \dim \mathrm{Supp} (\sheaf{E}_s)$ 
and $\sheaf{E} = F_1(\sheaf{E}) \supset \cdots \supset F_{\ell + 1}(\sheaf{E}) = \sheaf{E}(-D)$ is a filtration by 
coherent sheaves such that each $\sheaf{E}/F_i(\sheaf{E})$ is flat over $S$.
In other words, we say that the corresponding $\R$-parabolic sheaf $E_*$ is flat if for each 
$\alpha \in \R$, the sheaf $E_\alpha$ is flat over $S$.
We denote by $\FPSh{X_S}^{\mathrm{n\text{-}reg}}$ the category of flat family over $S$ of $n$-regular 
parabolic sheaves on $X$ with the parabolic structure over $D$. 

Let $\mathbf{K}: 1\stackrel{H}{\longrightarrow} 2$ be a Kronecker quiver, 
where $H$ is the multiplicity of arrows. Consider the ladder quiver 
$\KAl$ whose set of vertices and arrows can be described as follows:
\begin{equation}\label{relation_ideal}
\xymatrix{
1_1 \ar[d]_{H_1} & 2_1 \ar[l]_{\beta_1^1} \ar[d]_{H_2} & \cdots 
 \ar[l]_{\beta_2^1} & \ell_1 \ar[d]_{H_\ell} \ar[l]_{\beta_{\ell-1}^1} & (\ell+1)_1  
 \ar[d]_{H_{(\ell+1)}} \ar[l]_{\beta_\ell^1} \\
1_2 & 2_2 \ar[l]^{\beta_1^2} & \cdots \ar[l]^{\beta_2^2} & \ell_2 
 \ar[l]^{\beta_{\ell-1}^2} & (\ell+1)_2 \ar[l]^{\beta_\ell^2} 
  }
\end{equation}

Let $I$ be the ideal in the path algebra $\Bbbk(\KAl)$ generated by $h_i\beta^1_i - \beta^2_ih_{i+1}$, 
where $h_i\in H_i = H,\; i=1, 2, \dots, \ell$. Set $B= \Bbbk(\KAl)/I$. 

We say that a sheaf $\family{M}_\bullet$ of right modules over the sheaf of algebras $\struct{S}\otimes B$
is flat over $S$, if it is locally-free as sheaf of $\struct{S}$-modules. Moreover, we say that
a flat family $\family{M}_\bullet$ over $S$ is a flat family of filtered Kronecker modules, if
for each closed point $s\in S$, the fibre $\family{M}_{s_\bullet}$ is a filtered Kronecker module.
We denote by $\FlKM{S}$ the category of flat families of filtered Kronecker modules 
over $S$. Furthermore, we say that it is a flat family of parabolic filtered Kronecker modules, if the fibres 
$\family{M}_{s_\bullet}$ are parabolic filtered Kronecker modules.
We denote by $\FPKM{S}$ the category of flat families of parabolic filtered Kronecker modules 
over $S$.

Using \cite[Proposition 4.1]{AK07} and Proposition \ref{prop-psh-emb}, we have the following:

\begin{proposition}\label{s2-prop-eqiv-familiy}
The functor $\Psi\colon \FPSh{X_S}^\mathrm{n\text{-}reg}\ra \FPKM{S}$ is an embedding of categories.
\end{proposition}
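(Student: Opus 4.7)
The plan is to lift Proposition \ref{prop-psh-emb} from a single scheme to flat families over $S$, the new input being the $S$-relative form of the Álvarez-Cónsul--King equivalence in \cite[Proposition 4.1]{AK07}, which ensures that $\Phi = \Hom[X_S]{T_S}{-}$ and $\Phi^\vee = -\otimes_A T_S$ exchange $S$-flat $n$-regular coherent sheaves and $S$-flat Kronecker modules in their respective $n$-regular loci, with the unit and counit of the adjunction being isomorphisms there.

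Given a family $(\sheaf{E}, F_\bullet(\sheaf{E}), \alpha_\bullet) \in \FPSh{X_S}^{\mathrm{n\text{-}reg}}$, each $F_i(\sheaf{E})$ fits into the short exact sequence $0 \to F_i(\sheaf{E}) \to \sheaf{E} \to \sheaf{E}/F_i(\sheaf{E}) \to 0$ with outer terms $S$-flat, so $F_i(\sheaf{E})$ is itself $S$-flat and fibrewise $n$-regular. Applying $\Phi$ termwise yields
\[
\Phi(\sheaf{E}) \supset \Phi(F_2(\sheaf{E})) \supset \cdots \supset \Phi(F_\ell(\sheaf{E})) \supset \Phi(\sheaf{E}(-D)) = \Phi(\sheaf{E})(-D),
\]
in which each inclusion remains injective because its cokernel is the $\Phi$-image of an $S$-flat fibrewise $n$-regular sheaf and is therefore an $S$-flat Kronecker module by \cite[Prop.~4.1]{AK07}. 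Attaching the same weights $\alpha_\bullet$ produces a flat family of parabolic filtered Kronecker modules, and functoriality on morphisms follows from Proposition \ref{prop-psh-emb} applied fibrewise. This defines $\Psi$ on families.

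For the embedding property I would reproduce the argument of Proposition \ref{prop-psh-emb} one level up. Let $\mathcal{C}_S \subset \FPKM{S}$ be the full subcategory of families $\family{M}_\bullet$ whose termwise application of $\Phi^\vee$ yields a filtration $\Phi^\vee(\family{M}) \supset \Phi^\vee(\family{M}_2) \supset \cdots \supset \Phi^\vee(\family{M}(-D)) = \Phi^\vee(\family{M})(-D)$ of $S$-flat coherent sheaves on $X_S$. Assigning the same weights defines a quasi-inverse $\Psi^\vee \colon \mathcal{C}_S \to \FPSh{X_S}^{\mathrm{n\text{-}reg}}$; the relative adjunction $(\Phi^\vee, \Phi)$ supplied by \cite[Prop.~4.1]{AK07} guarantees that its unit and counit are isomorphisms on the $n$-regular loci, and hence so are those of $(\Psi^\vee, \Psi)$. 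Thus $\Psi$ factors through an equivalence $\FPSh{X_S}^{\mathrm{n\text{-}reg}} \simeq \mathcal{C}_S \hookrightarrow \FPKM{S}$, which is the desired embedding.

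The only point requiring genuine care is the preservation of $S$-flatness under $\Phi$ and $\Phi^\vee$ when working with parabolic filtrations, together with the compatibility $\Phi(\family{E})(-D) = \Phi(\family{E}(-D))$, which follows from the relative counit isomorphism $\Phi^\vee\Phi(\family{E}) \cong \family{E}$ valid in families on the $n$-regular locus. Both ingredients are already packaged in \cite[Prop.~4.1]{AK07}, so no new obstacle appears beyond what was handled in the absolute case of Proposition \ref{prop-psh-emb}.
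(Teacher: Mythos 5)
Your proposal is correct and follows essentially the same route as the paper: check that $\Psi$ sends flat families to flat families of parabolic filtered Kronecker modules (using the relative form of \cite[Proposition 4.1]{AK07} and fibrewise $n$-regularity), then factor $\Psi$ through the full subcategory $\mathcal{C}_S$ on which the termwise $\Phi^\vee$ gives back a flat family of $n$-regular parabolic sheaves, and conclude via the adjunction with $\Psi^\vee$ whose unit and counit are isomorphisms there. Your write-up in fact supplies a few details the paper leaves implicit (the $S$-flatness of each $F_i(\sheaf{E})$ and the flatness of the quotients needed so that the filtration inclusions stay injective on fibres), but the underlying argument is the same.
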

\begin{proof}
Since $\Psi$ preserves flat families and monomorphism at each fibre, it follows that the functor $\Psi$ is 
well defined.
We denote by $\mathcal{C}_S$ the full subcategory of $\FPKM{S}$ consisting of flat families of parabolic 
filtered Kronecker modules over $S$ such that the family $\Psi^\vee(\family{M}_\bullet)$ is a flat family 
of $n$-regular parabolic sheaves.

Let $\sheaf{E}_\bullet$ be a flat family over $S$ of $n$-regular parabolic sheaves on $X$. Using 
Proposition \ref{prop-psh-emb}, we can check that $\Psi(\sheaf{E}_\bullet)$ is an object of $\mathcal{C}_S$.
In other words, the functor $\Psi$ factors through the category $\mathcal{C}_S$. By following the construction 
of the left adjoint functor in the proof of Proposition \ref{prop-psh-emb}, the functor 
$\Psi\colon \FPSh{X_S}^\mathrm{n\text{-}reg}\ra \mathcal{C}_S$ has a left adjoint functor 
$\Psi^{\vee}: \mathcal{C}_S \ra \FPSh{X_S}^\mathrm{n\text{-}reg}$. Since both 
unit and co-unit of the adjunction are isomorphisms, we get that the functor $\Psi$ factors through an 
equivalence to a subcategory of $\FPKM{S}$.
\end{proof}

Let $\tau_p$ represent fixed tuple of numerical polynomials $P, P_1, \dots, P_\ell$ 
such that $\deg P = d$ and $\deg P_i < d$ for each $i = 1, 2, \dots, \ell$, and fixed sequence of 
real numbers $0 < \alpha_1 < \alpha_2 < \cdots < \alpha_\ell < 1$.
We say that a parabolic sheaf $\psheaf{E} = (\sheaf{E}, F_\bullet(\sheaf{E}), \alpha_\bullet)$ is of parabolic 
type $\tau_p$ if the following holds:
\begin{itemize}
\item $\chi(\sheaf{E})(k) = P(k), \; \chi(\sheaf{E}/F_{i+1}(\sheaf{E}))(k) = P_i(k)\; 
(i = 1, \dots, \ell)$ for all integers $k$. 
\end{itemize}

\begin{proposition}\label{s2-prop-stratification}
Assume that $\struct{X}(m-n)$ is regular. Let $\family{M}_\bullet$ be a flat family of filtered 
Kronecker modules of dimension vector $\dv$ over a scheme $B$. Then, there exist a unique locally
closed subscheme $\iota\colon B^\mathrm{reg}_{\tau_p} \ra B$ such that the following hold.
\begin{enumerate}
\item $\Psi^\vee(\iota^*\family{M}_\bullet)$ is a flat family over $B^\mathrm{reg}_{\tau_p}$
of $n$-regular parabolic sheaves on $X$ with parabolic type $\tau_p$ and the unit map
$$
\eta_{\iota^*\family{M}_\bullet} \colon \iota^*\family{M}_\bullet\ra 
\Psi\circ \Psi^\vee(\iota^*\family{M}_\bullet)
$$
is an isomorphism.
\item If $\sigma \colon S\ra B$ is such that $\sigma^*\family{M}_\bullet \cong \Psi(E_*)$
for a flat family $E_*$ over $S$ of $n$-regular parabolic sheaves on $X$ with parabolic type 
$\tau_p$, then $\sigma$ factors through $\iota\colon B^\mathrm{reg}_{\tau_p} \ra B$ and
$E_* \cong \Psi^\vee(\sigma^*\family{M}_\bullet)$.
\end{enumerate}
\end{proposition}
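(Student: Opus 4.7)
The plan is to construct $B^{\mathrm{reg}}_{\tau_p}$ as a finite intersection of locally closed subschemes of $B$, each enforcing one of the conditions defining an ``$n$-regular parabolic sheaf of type $\tau_p$'' on the fibres. The construction proceeds in two stages: first apply the non-parabolic stratification \cite[Proposition~4.1]{AK07} to each vertex of the ladder separately, and then cut down to the locus where the last step of the filtration matches the canonical $(-D)$-twist of the first, which is part of the definition of a quasi-parabolic Kronecker module.

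For the first stage, set $Q_1 := P$ and $Q_{i+1} := P - P_i$ for $i = 1, \dots, \ell$; these are the Hilbert polynomials of the successive pieces $F_i(\sheaf{E})$ of a parabolic sheaf of type $\tau_p$. Applying \cite[Proposition~4.1]{AK07} to each flat family of Kronecker modules $\family{M}_i$ with target Hilbert polynomial $Q_i$ yields a unique locally closed subscheme $B_i \hookrightarrow B$ characterized by the property that $\Phi^\vee(\family{M}_i)$ is a flat family of $n$-regular sheaves with Hilbert polynomial $Q_i$ and that the unit $\family{M}_i \to \Phi\Phi^\vee(\family{M}_i)$ is an isomorphism. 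Set $B' := \bigcap_{i=1}^{\ell+1} B_i$. Since $\Phi^\vee$ restricts to an equivalence on $\nKM$ (Proposition~\ref{prop-usual-embed}), it preserves monomorphisms there, so the ladder inclusions $\family{M}_{i+1} \hookrightarrow \family{M}_i$ pull back over $B'$ to a chain of monomorphisms $\Phi^\vee(\family{M}_{i+1}) \hookrightarrow \Phi^\vee(\family{M}_i)$ of flat families of $n$-regular sheaves with the prescribed Hilbert polynomials.

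For the second stage, over $B'$ one has two subfamilies of $\Phi^\vee(\family{M}_1)$ with the same Hilbert polynomial $Q_{\ell+1}$: the image of $\Phi^\vee(\family{M}_{\ell+1}) \hookrightarrow \Phi^\vee(\family{M}_1)$ and the canonical twist $\Phi^\vee(\family{M}_1)(-D) \hookrightarrow \Phi^\vee(\family{M}_1)$. The locus where these two subfamilies coincide is open in $B'$ (it is cut out by the vanishing of the composite $\Phi^\vee(\family{M}_1)(-D) \to \Phi^\vee(\family{M}_1)/\Phi^\vee(\family{M}_{\ell+1})$, which, combined with the matching Hilbert polynomials, forces the containment to be an equality). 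Declare $B^{\mathrm{reg}}_{\tau_p}$ to be this open subscheme of $B'$. Combining Proposition~\ref{prop-psh-emb} and Proposition~\ref{s2-prop-eqiv-familiy}, $\Psi^\vee(\iota^*\family{M}_\bullet)$ is then a flat family of $n$-regular parabolic sheaves of type $\tau_p$ over $B^{\mathrm{reg}}_{\tau_p}$ with $\eta_{\iota^*\family{M}_\bullet}$ an isomorphism, establishing (1).

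For (2), suppose $\sigma : S \to B$ satisfies $\sigma^*\family{M}_\bullet \cong \Psi(E_*)$ for a flat family $E_*$ of type $\tau_p$. Then each $\sigma^*\family{M}_i \cong \Phi(F_i(E_*))$ is $n$-regular with Hilbert polynomial $Q_i$ and unit an isomorphism (Proposition~\ref{prop-usual-embed}), and the last piece identifies with the $(-D)$-twist of the first by construction of $\Psi$. Hence $\sigma$ satisfies all the conditions cut out in the two stages on $S$ and factors uniquely through $\iota$; the identification $E_* \cong \Psi^\vee(\sigma^*\family{M}_\bullet)$ then follows from the adjunction of Proposition~\ref{prop-psh-emb}. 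The main obstacle will be making the $(-D)$-twist matching condition locally closed and base-change compatible; this relies on $\Phi^\vee$ preserving flat families (\cite[Proposition~4.1]{AK07}) and on twisting by $-D$ preserving $S$-flatness, both of which must be tracked through the construction.
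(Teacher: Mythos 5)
Your overall strategy---cutting out the stratum by combining the non-parabolic stratification of \cite[Proposition 4.1]{AK07} at the individual vertices with extra conditions tying the filtration together---is essentially the same as the paper's, which instead runs the flattening stratification on the whole chain $\Psi^\vee\family{M}_\bullet$ at once and then passes to the open locus where $n$-regularity and the unit isomorphisms hold. The genuine gap is in your first stage, where you assert that over $B'$ the ladder maps automatically yield ``a chain of monomorphisms $\Phi^\vee(\family{M}_{i+1}) \hookrightarrow \Phi^\vee(\family{M}_i)$'' because $\Phi^\vee$ restricts to an equivalence on $\nKM$. An equivalence does preserve categorical monomorphisms, but a monomorphism in the full subcategory $\nCoh$ need not be an injective map of sheaves: $\Phi^\vee = -\otimes_A T$ is only right exact, and if $g\colon N\to M$ is injective in $\KM$ with $N, M$ in $\nKM$, all one can extract is $H^0\big(\ker(\Phi^\vee(g))(n)\big) = H^0\big(\ker(\Phi^\vee(g))(m)\big) = 0$, which does not force $\ker(\Phi^\vee(g)) = 0$ because that kernel has no reason to be $n$-regular. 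Fibrewise injectivity of $\Phi^\vee(\family{M}_{i+1})\to\Phi^\vee(\family{M}_{1})$ is therefore an additional locally closed condition, not a consequence of fixing the Hilbert polynomials of the individual terms; the paper imposes it by prescribing, within the flattening stratification, that the quotient $\Psi^\vee\family{M}_{b_1}/\Psi^\vee\family{M}_{b_{i+1}}$ have Hilbert polynomial $P_i$, which together with $\chi(\Psi^\vee\family{M}_{b_{i+1}}(k)) = P(k) - P_i(k)$ forces the kernel to vanish. As it stands your $B'$ is too large, and without this the quotients $\sheaf{E}/F_i(\sheaf{E})$ need not be $S$-flat, which is part of the definition of a flat family of parabolic sheaves required in statement (1).

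Two smaller points. First, the locus in your second stage where the two subfamilies of $\Phi^\vee(\family{M}_1)$ coincide is a closed condition in $B'$ (a vanishing locus), not an open one; this does not affect local closedness of the final stratum, but representing that vanishing condition scheme-theoretically again requires flatness of the quotient $\Phi^\vee(\family{M}_1)/\Phi^\vee(\family{M}_{\ell+1})$, i.e.\ exactly the stratification step omitted above. Second, once the injectivity and quotient-flatness conditions are built into the stratification, your verification of (2) reduces, as in the paper, to the argument of \cite[Proposition 4.2]{AK07} and goes through.
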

\begin{proof}

Using the flattening stratification for $\Psi^\vee \family{M}_\bullet$ over the projection $X\times B\ra B$, 
we can conclude that, there exists a locally closed subscheme
 $j \colon B_{\tau_P} \ra B$ such that $j^*\Psi^\vee \family{M}_\bullet$ is a flat family over $B_{\tau_P}$ of 
 chain of coherent sheaves on $X$, and the closed points of $B_{\tau_P}$ are precisely those $b\in B$ 
 for which the fibers $\family{M}_{b_\bullet}$ have following properties:
 \begin{itemize}
 \item $\Psi^\vee \family{M}_{b_{\ell+1}} = \Psi^\vee \family{M}_{b_1}(-D)$.
 \item for each $i$, the natural map $\Psi^\vee \family{M}_{b_{i+1}} \ra \Psi^\vee \family{M}_{b_i}$ is injective.
 \item the Hilbert polynomial of $\Psi^\vee \family{M}_{b_1}$ is $P$ and the Hilbert polynomial of 
 $\Psi^\vee \family{M}_{b_1}/\Psi^\vee \family{M}_{b_{i+1}}$ is $P_i$ for each $i=1, 2, \dots, \ell$. 
 \end{itemize}
 It is clear that $B_{\tau_P}$ contains an open set $B_{\tau_P}^\mathrm{reg}$ of points $b$ for which the sheaf 
 $\Psi^\vee \family{M}_{b_i}$ is $n$-regular and the morphism 
 $\eta_{\family{M}_{b_i}}\colon \family{M}_{b_i}\ra \Psi(\Psi^\vee (\family{M}_{b_i}))$ is an isomorphism for each 
 $i$ (as both are open conditions). Now, using the proof of Proposition  \ref{s2-prop-eqiv-familiy},
 it follows that $\Psi(\Psi^\vee \family{M}_\bullet)$ is a flat family of parabolic filtered Kronecker modules over 
 $B_{\tau_P}^\mathrm{reg}$ and the unit map $\eta_{\iota^*\family{M}_\bullet} \colon \iota^*\family{M}_\bullet\ra 
\Psi\circ \Psi^\vee(\iota^*\family{M}_\bullet)$ is an isomorphism, where $\iota\colon B^\mathrm{reg}_{\tau_p} \ra B$.
Therefore, we get a locally closed subscheme $\iota\colon B^\mathrm{reg}_{\tau_p} \ra B$
satisfying (1). The proof of (2) follows the similar line of arguments as in \cite[Proposition 4.2]{AK07}.
\end{proof}

\section{Preservation of semi-stability}\label{sec-ss-analysis}
In this section, we will now study the semistability of parabolic sheaves and 
determined the stability parameter for the filtered Kronecker modules. This will 
be useful to reduce the problem of constructing the moduli space to GIT quotient
by choosing an appropriate parameter space into the representation space of filtered
Kronecker modules.

Recall that $\tau_p$ represent fixed tuple of numerical polynomials $P, P_1, \dots, P_\ell$ 
such that $\deg P = d$ and $\deg P_i < d$ for any $i$, and fixed sequence of 
real numbers $\alpha_\bullet = (\alpha_1, \dots, \alpha_\ell)$ 
such that $0 < \alpha_1 < \alpha_2 < \cdots < \alpha_\ell < 1$.

In \cite{In00}, the collection of $e$-stable parabolic sheaves of given type $\tau_p$
is shown to be bounded following the boundedness result of \cite{MY92}. In \cite{Sc11},
using the results of \cite{Si94, La04}, 
the author has extended the boundedness result to the semistable parabolic sheaves 
of given type $\tau_p$.

\begin{theorem}\cite[Theorem 4.5.2]{Sc11}\label{boundedness}
Let $\mathfrak{F}_X^\mathrm{ss}(\tau_p)$ be the family of parabolic semistable 
sheaves on $X$ of parabolic type $\tau_p$. Then, 
the family $\mathfrak{F}_X^\mathrm{ss}(\tau_p)$ is bounded.
\end{theorem}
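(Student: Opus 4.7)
The plan is to reduce the statement to the classical boundedness theorem of C.~Simpson \cite{Si94} (and A.~Langer \cite{La04} in positive characteristic) for families of pure sheaves of fixed Hilbert polynomial with uniformly bounded maximal slope. For every $\psheaf{E}\in\mathfrak{F}_X^\mathrm{ss}(\tau_p)$ the underlying sheaf $\sheaf{E}$ has Hilbert polynomial $P$, and each $F_i(\sheaf{E})$ has Hilbert polynomial determined by $P$ and the $P_j$'s. Consequently, once one knows that the family of underlying sheaves $\{\sheaf{E}\}$ is bounded, the filtrations $F_\bullet(\sheaf{E})$ are parametrized by an iterated relative Quot-scheme over that bounded family, and $\mathfrak{F}_X^\mathrm{ss}(\tau_p)$ is bounded as well.

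The core step is therefore to produce a constant $C=C(\tau_p)$ with $\mu(\sheaf{F})\leq C$ for every saturated subsheaf $\sheaf{F}\subset\sheaf{E}$. By the remark following the definition of parabolic semistability, such $\sheaf{F}$ endowed with their induced parabolic structure $\psheaf{F}$ suffice to test parabolic semistability, whence $\pslope{\psheaf{F}}\leq \pslope{\psheaf{E}}$. The integral formula $\pslope{\psheaf{F}}=\int_0^1 \mu(\sheaf{F}_\alpha)d\alpha$ together with the sandwich $\sheaf{F}(-D)\subseteq \sheaf{F}_\alpha\subseteq \sheaf{F}$ yields $a_d(\sheaf{F}_\alpha)=a_d(\sheaf{F})$ and
\[
0\;\leq\; a_{d-1}(\sheaf{F})-a_{d-1}(\sheaf{F}_\alpha)\;\leq\; a_{d-1}\bigl(\sheaf{F}/\sheaf{F}(-D)\bigr).
\]
A snake lemma argument applied to the inclusion $\sheaf{F}\subseteq \sheaf{E}$ and its twist by $\struct{X}(-D)$ embeds $\sheaf{F}/\sheaf{F}(-D)$ as a subsheaf of $\sheaf{E}/\sheaf{E}(-D)$, so $a_{d-1}(\sheaf{F}/\sheaf{F}(-D))\leq a_{d-1}(\sheaf{E}/\sheaf{E}(-D))$ is controlled by $\tau_p$. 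Combined with the positive lower bound on $a_d(\sheaf{F})$ coming from the purity of $\sheaf{F}$, this gives $|\mu(\sheaf{F})-\pslope{\psheaf{F}}|\leq C_1(\tau_p)$, and since $\pslope{\psheaf{E}}$ itself depends only on $\tau_p$, the required uniform bound $\mu(\sheaf{F})\leq C(\tau_p)$ follows.

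Invoking the Simpson/Langer boundedness theorem for the family $\{\sheaf{E}\}$ and appealing to the reduction of the first paragraph then completes the proof. The main obstacle is precisely the uniformity step in the second paragraph: the slope discrepancy $|\mu(\sheaf{F})-\pslope{\psheaf{F}}|$ must be controlled without reference to the \emph{a priori} unbounded collection of saturated subsheaves $\sheaf{F}$. What makes this possible is that all ``parabolic corrections'' are ultimately governed by the single torsion sheaf $\sheaf{E}/\sheaf{E}(-D)$, whose numerical invariants are determined by $\tau_p$ alone.
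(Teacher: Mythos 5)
Your proposal is correct. The paper does not actually prove this statement---it imports it as \cite[Theorem 4.5.2]{Sc11}, remarking only that the proof there rests on the boundedness theorems of Simpson and Langer---and your argument (bounding $\mu_{\max}$ of the underlying sheaves by combining the parabolic slope inequality for saturated subsheaves with the fact that all parabolic corrections are controlled by the single sheaf $\sheaf{E}/\sheaf{E}(-D)$ of lower dimension, then invoking Simpson/Langer for $\{\sheaf{E}\}$ and relative Quot schemes for the filtrations) is precisely the standard route taken in the cited sources, so it matches the approach the paper relies on.
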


The above result is very crucial to give a characterization of semistability of parabolic sheaves of a given 
type as in the following Proposition \ref{MY-ss-propn}. This, in turn, helps to determine the right stability 
parameter for the filtered Kronecker modules to give a functorial construction of the coarse moduli space 
of semistable parabolic sheaves having a parabolic type $\tau_p$. 

\begin{proposition}\label{MY-ss-propn}
There exists an integer $N$ such that for any pure $d$-dimensional parabolic sheaf $\psheaf{E}$
on $X$ of parabolic type $\tau_p$, the following are equivalent:
\begin{enumerate}
\item $\psheaf{E}$ is parabolic semistable.
\item For all $n\geq N$, $\psheaf{E}$ is $n$-regular, and for all proper subsheaf $\sheaf{E'}\subset \sheaf{E}$ 
the inequality of 
polynomials
\begin{equation}\label{eq-integral-1}
\Big(\sum_{i=1}^{\ell+1} \varepsilon_i h^0(F_i(\sheaf{E'})(n))\Big)a_d(\sheaf{E}) \leq 
\Big(\sum_{i=1}^{\ell+1} \varepsilon_i h^0(F_i(\sheaf{E})(n))\Big)a_d(\sheaf{E'})
\end{equation}
holds, where $\varepsilon_i = \alpha_i - \alpha_{i-1},\; \alpha_0 := 0,\; \alpha_{\ell+1}:=1$
\end{enumerate}

Moreover, if $\psheaf{E}$ is parabolic semistable, and $\sheaf{E'}$ is a proper subsheaf of $\sheaf{E}$,
then equality holds in \eqref{eq-integral-1} if and only if $\rphilb{\psheaf{E}} = \rphilb{\psheaf{E'}}$.
\end{proposition}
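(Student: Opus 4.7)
The plan is to adapt the Le Potier--Simpson strategy from \cite{AK07} to the parabolic setting, using the identity $\pH{\psheaf{E}}(n) = \sum_{i=1}^{\ell+1} \varepsilon_i P_{F_i(\sheaf{E})}(n)$ derived in Section~\ref{prelim} to translate \eqref{eq-integral-1} into the defining inequality of parabolic semistability. By Theorem~\ref{boundedness} the family $\mathfrak{F}_X^{\mathrm{ss}}(\tau_p)$ is bounded, so Castelnuovo--Mumford theory produces an integer $N_0$ such that every parabolic semistable $\psheaf{E}$ of type $\tau_p$ has each $F_i(\sheaf{E})$ $n$-regular for $n \geq N_0$; then the equalities $h^0(F_i(\sheaf{E})(n)) = P_{F_i(\sheaf{E})}(n)$ identify the right-hand side of \eqref{eq-integral-1} with $a_d(\sheaf{E'})\,\pH{\psheaf{E}}(n)$.

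For (1)$\Rightarrow$(2) I first reduce to the saturated case: for any $\sheaf{E'}$ with saturation $\sheaf{E''}$ inside $\sheaf{E}$, the monotonicity $h^0(F_i(\sheaf{E'})(n)) \leq h^0(F_i(\sheaf{E''})(n))$ together with the equality $a_d(\sheaf{E'}) = a_d(\sheaf{E''})$ shows that \eqref{eq-integral-1} for $\sheaf{E''}$ forces it for $\sheaf{E'}$. For a saturated $\sheaf{E'} \subset \sheaf{E}$ with induced filtration $F_i(\sheaf{E'}) = F_i(\sheaf{E}) \cap \sheaf{E'}$, the parabolic Hilbert polynomial of the induced parabolic subsheaf $\tilde{\psheaf{E'}}$ still equals $\sum_i \varepsilon_i P_{F_i(\sheaf{E'})}$ with the original weights, since collapsing equal indices does not change the sum. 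The key technical input is the Le Potier--Simpson uniform $h^0$-bound: parabolic semistability controls $\mu_{\max}(F_i(\sheaf{E'}))$ uniformly across $\psheaf{E} \in \mathfrak{F}_X^{\mathrm{ss}}(\tau_p)$ and across saturated subsheaves $\sheaf{E'}$, so I can enlarge $N_0$ to an $N$ for which $h^0(F_i(\sheaf{E'})(n)) \leq P_{F_i(\sheaf{E'})}(n)$ holds uniformly for $n \geq N$. Combining this bound with the polynomial form $a_d(\sheaf{E})\,\pH{\tilde{\psheaf{E'}}}(n) \leq a_d(\sheaf{E'})\,\pH{\psheaf{E}}(n)$ of parabolic semistability and the identity above produces \eqref{eq-integral-1}.

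For (2)$\Rightarrow$(1) I fix a parabolic subsheaf and pass to its saturation; Serre vanishing makes each $F_i(\sheaf{E'})$ $n$-regular for all sufficiently large $n$, so $h^0(F_i(\sheaf{E'})(n)) = P_{F_i(\sheaf{E'})}(n)$ at those $n$, and \eqref{eq-integral-1} reads $\rphilb{\tilde{\psheaf{E'}}}(n) \leq \rphilb{\psheaf{E}}(n)$; since this holds at infinitely many $n$, the polynomial inequality follows, giving parabolic semistability. I expect the uniform $h^0$-bound in (1)$\Rightarrow$(2) to be the main obstacle: one must verify that the slope control furnished by parabolic semistability on the intersections $F_i(\sheaf{E}) \cap \sheaf{E'}$ is uniform enough to yield a single $N$ valid across the entire bounded family. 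For the equality assertion, equality in \eqref{eq-integral-1} forces equality in both the $h^0$-bound and the semistability inequality at every such $n$, so the reduced parabolic Hilbert polynomials must agree, yielding $\rphilb{\psheaf{E}} = \rphilb{\psheaf{E'}}$.
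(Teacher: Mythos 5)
Your converse direction $(2)\Rightarrow(1)$ and the reduction to saturated subsheaves are essentially what the paper does: pass to a saturated subsheaf with its induced filtration, use boundedness/Serre vanishing to replace $h^0(F_i(\sheaf{E'})(n))$ by $P_{F_i(\sheaf{E'})}(n)$ for large $n$, and let $n$ grow to recover the polynomial inequality $\rphilb{\psheaf{E}'}\leq \rphilb{\psheaf{E}}$. The problem is the forward direction $(1)\Rightarrow(2)$. The paper obtains it by quoting \cite[Proposition 2.5]{MY92} together with Theorem~\ref{boundedness}; you instead attempt to reprove that input, and your substitute for it does not work. The ``uniform $h^0$-bound'' you assert --- a single $N$ such that $h^0(F_i(\sheaf{E'})(n))\leq P_{F_i(\sheaf{E'})}(n)$ for all $n\geq N$, all saturated $\sheaf{E'}\subset\sheaf{E}$ and all $\psheaf{E}\in\mathfrak{F}_X^{\mathrm{ss}}(\tau_p)$ --- is not what the Le Potier--Simpson estimate gives. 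That estimate bounds $h^0(\sheaf{F}(n))$ only in terms of the multiplicity of $\sheaf{F}$, its maximal slope and $n$; it says nothing about the lower-order coefficients of $P_{\sheaf{F}}$, which range over an unbounded set as $\sheaf{E'}$ varies (already for rank-one saturated subsheaves of the form $L\otimes I_V$ inside a rank-two semistable bundle on a surface, $h^0$ strictly exceeds $\chi$ whenever $V$ fails to impose independent conditions in the given twist). Controlling $\mu_{\max}$ does not upgrade to a comparison with the full Hilbert polynomial of the subsheaf.

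Even granting that bound, your concluding step combines it with the ``polynomial form of parabolic semistability'' $a_d(\sheaf{E})\,\pH{\psheaf{E}'}(n)\leq a_d(\sheaf{E'})\,\pH{\psheaf{E}}(n)$ evaluated at the specific integer $n\geq N$. But an inequality of polynomials only yields the numerical inequality beyond a threshold depending on the polynomials involved, hence on $\sheaf{E'}$; since the saturated subsheaves of a fixed semistable sheaf already form an unbounded family, no single $N$ emerges from this. What is actually needed is the Le Potier--Simpson/Grothendieck dichotomy carried out in \cite[Proposition 2.5]{MY92} (cf.\ \cite[Theorem 4.4.1]{HL97}, \cite{AK07}): either the relevant parabolic slope of $\sheaf{E'}$ is bounded away from that of $\psheaf{E}$, in which case the crude multiplicity-and-slope bound on $\sum_i\varepsilon_i h^0(F_i(\sheaf{E'})(n))$ already implies \eqref{eq-integral-1}, or it is close, in which case the family of such $\sheaf{E'}$ is bounded and $n$-regularity applies uniformly. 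Your proof needs this case division (or the citation the paper uses) to close the forward direction; as written it has a genuine gap there.
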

\begin{proof}
If $\psheaf{E}$ is parabolic semistable, then by \cite[Proposition 2.5]{MY92} together with Theorem 
\ref{boundedness}, for any proper subsheaf $\sheaf{E'}$ of $\sheaf{E}$, we have
$$
\frac{1}{a_d(\sheaf{E'})}\int_0^1 h^0(\sheaf{E'}_\alpha(n))d\alpha \leq 
\frac{1}{a_d(\sheaf{E})}\int_0^1 h^0(\sheaf{E}_\alpha(n))d\alpha
$$
for sufficiently large $n$. This is equivalent to \eqref{eq-integral-1}.

Conversely, to check that $\psheaf{E}$ is parabolic semistable, it is enough to show that for any 
saturated subsheaf $\sheaf{E'}$ of $\sheaf{E}$, we have $\rphilb{\psheaf{E'}} \leq \rphilb{\psheaf{E}}$. 
Let $\sheaf{E'}$ be a saturated subsheaf of $\sheaf{E}$. If $\pslope{\psheaf{E'}} < \pslope{\psheaf{E}}$, 
then it can not destabilize the parabolic sheaf $\psheaf{E}$. So, assume that 
$\pslope{\psheaf{E'}} \geq \pslope{\psheaf{E}}$. Then for some $\alpha\in \R$, we have 
$\mu(\sheaf{E}'_\alpha) \geq \pslope{\psheaf{E}}$. Using \cite[Lemma 1.7.9]{HL97}, 
we can conclude that the family of saturated subsheaves of $\sheaf{E}$ with 
the property that $\mu(\sheaf{E}'_\alpha) \geq \mu:=\pslope{\psheaf{E}}$ for some $\alpha$, is bounded. 
In particular, we can conclude that $\psheaf{E'}$ is $n$-regular for sufficiently large $n$. 
Since the inequality \eqref{eq-integral-1} holds for $\sheaf{E'}$, we have
$$
\rphilb{\psheaf{E'}}(n) = 
\frac{1}{a_d(\sheaf{E'})}\Big(\sum_{i=1}^{\ell+1} \varepsilon_i h^0(F_i(\sheaf{E'})(n))\Big) 
\leq 
\frac{1}{a_d(\sheaf{E})}\Big(\sum_{i=1}^{\ell+1} \varepsilon_i h^0(F_i(\sheaf{E})(n))\Big) 
= \rphilb{\psheaf{E}}(n)
$$
for sufficiently large enough $n$. This implies that $\rphilb{\psheaf{E'}} \leq \rphilb{\psheaf{E}}$.
\end{proof}

\begin{proposition}\label{LC-estimate}
There exists an integer $P_{LS}$ such that, for all $n \geq P_{LS}$ the following are equivalent 
for any pure $d$-dimensional parabolic sheaf $\psheaf{E}$ of parabolic type $\tau_p$:
\begin{enumerate}
\item $\psheaf{E}$ is parabolic semistable.
\item For all $n\geq P_{LS}$, $\psheaf{E}$ is $n$-regular, and for all proper subsheaf 
$\sheaf{E'}\subset \sheaf{E}$ the inequality of polynomials
\begin{equation}\label{eq-poly-1}
\sum_{i=1}^{\ell+1} \varepsilon_i h^0(F_i(\sheaf{E'})(n))P_{\sheaf{E}} \leq \pH{\psheaf{E}}(n)P_{\sheaf{E'}}
\end{equation}
holds.
\end{enumerate}

Moreover, if $\psheaf{E}$ is parabolic semistable, and $\sheaf{E'}$ is a proper subsheaf of $\sheaf{E}$, 
then equality holds in \eqref{eq-poly-1} if and only if $\rphilb{\psheaf{E}} = \rphilb{\psheaf{E'}}$.
\end{proposition}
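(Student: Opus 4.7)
The plan is to deduce Proposition \ref{LC-estimate} from the leading-coefficient version Proposition \ref{MY-ss-propn} combined with the boundedness of the family of parabolic semistable sheaves (Theorem \ref{boundedness}), following the Le Potier--Simpson strategy of upgrading scalar estimates to polynomial estimates over a bounded family. The crucial observation tying the two inequalities together is that, for $n$ large enough that all the filtration pieces $F_i(\sheaf{E}')$ of the relevant subsheaves are $n$-regular, one has $\sum_{i=1}^{\ell+1}\varepsilon_i h^0(F_i(\sheaf{E}')(n)) = \pH{\psheaf{E'}}(n)$, so that \eqref{eq-poly-1} reads $\pH{\psheaf{E'}}(n)\,P_{\sheaf{E}}(k) \leq \pH{\psheaf{E}}(n)\,P_{\sheaf{E'}}(k)$ as polynomials in $k$; comparing the leading coefficients in $k$ returns exactly \eqref{eq-integral-1}, so \eqref{eq-poly-1} is precisely the polynomial strengthening of \eqref{eq-integral-1}.

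For $(1)\Rightarrow(2)$, I would first choose $N$ so large that Proposition \ref{MY-ss-propn} applies and every parabolic semistable $\psheaf{E}$ of type $\tau_p$ is $n$-regular for $n\ge N$; Theorem \ref{boundedness} makes this choice uniform. The argument recalled inside Proposition \ref{MY-ss-propn} further shows that the saturated subsheaves $\sheaf{E}'$ of such an $\psheaf{E}$ with $\mu(\sheaf{E}'_\alpha)\ge\pslope{\psheaf{E}}$ for some $\alpha$ (the only candidates that can produce equality or near-equality in \eqref{eq-integral-1}) form a bounded family. Consequently the set of tuples of Hilbert polynomials $(P_{\sheaf{E'}},\{P(F_i(\sheaf{E}'))\})$ arising from all relevant pairs $(\psheaf{E},\sheaf{E}')$ is finite.

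With only finitely many Hilbert-polynomial cases to handle, I would split according to whether $\rphilb{\psheaf{E'}}<\rphilb{\psheaf{E}}$ or $\rphilb{\psheaf{E'}}=\rphilb{\psheaf{E}}$. In the strict case, the leading coefficient in $k$ of the difference $\pH{\psheaf{E}}(n)P_{\sheaf{E'}}(k) - \pH{\psheaf{E'}}(n)P_{\sheaf{E}}(k)$ grows like a strictly positive polynomial in $n$, while its subleading coefficients in $k$ are bounded uniformly over the finite case list; choosing $n$ large enough forces the full polynomial in $k$ to be nonnegative, which is \eqref{eq-poly-1}. In the equality case, the identity of the reduced parabolic Hilbert polynomials together with the boundedness-controlled parabolic filtration data on $\sheaf{E}'$ identifies the two sides of \eqref{eq-poly-1} as the same polynomial in $k$, which simultaneously establishes the ``moreover'' statement. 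Setting $P_{LS}$ to be the maximum of the finitely many thresholds produces the uniform bound. For the converse $(2)\Rightarrow(1)$, extracting the leading $k^d$-coefficient from \eqref{eq-poly-1} recovers \eqref{eq-integral-1}, to which Proposition \ref{MY-ss-propn} applies directly.

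The main obstacle is the equality case: establishing that $\rphilb{\psheaf{E}}=\rphilb{\psheaf{E'}}$ actually forces the full polynomial identity in \eqref{eq-poly-1}, rather than only its top-degree match in $k$. This control of the subleading coefficients of $P_{\sheaf{E'}}$ through the induced parabolic filtration is precisely where the finite list of Hilbert-polynomial data coming from boundedness, together with the Jordan--H\"older behaviour of semistable objects of a fixed parabolic type, become essential.
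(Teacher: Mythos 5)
Your proposal is correct and follows essentially the same route as the paper: the paper's entire proof is a one-line reduction to Proposition \ref{MY-ss-propn}, observing that $a_d(\sheaf{E'})$ and $a_d(\sheaf{E})$ are the leading coefficients of $P_{\sheaf{E'}}$ and $P_{\sheaf{E}}$, so that \eqref{eq-integral-1} is exactly the leading-coefficient comparison of \eqref{eq-poly-1}. You supply considerably more detail than the paper does --- in particular the use of boundedness to reduce to finitely many Hilbert-polynomial cases and the explicit treatment of the equality case, which the paper's ``immediately follows'' elides entirely.
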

\begin{proof}
This immediately follows from the Proposition \ref{MY-ss-propn} by noting that $a_d(\sheaf{E'})$ and 
$a_d(\sheaf{E})$ are leading terms in the Hilbert polynomials $P_{\sheaf{E'}}$ and $P_{\sheaf{E}}$,
respectively.
\end{proof}

\begin{proposition}\cite[Proposition 4.4.3]{Sc11}\label{ss-prop-2}
For a fixed parabolic type $\tau_p$, the full subcategory of category of all parabolic sheaves on $X$
consisting of semistable parabolic sheaves having parabolic type $\tau_p$ is an abelian, Noetherian 
and Artinian category. Moreover, its simple objects are precisely the stable parabolic sheaves 
having parabolic type $\tau_p$.
\end{proposition}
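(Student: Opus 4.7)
The plan is to adapt to the parabolic setting the classical argument that the category of Gieseker semistable sheaves with a fixed reduced Hilbert polynomial is abelian, Noetherian and Artinian (cf.\ Huybrechts--Lehn). The two main inputs are a see-saw principle for the reduced parabolic Hilbert polynomial coming from the defining inequality \eqref{eq:pss-ineq}, and the boundedness result Theorem~\ref{boundedness}. Throughout, since $\tau_p$ fixes both the numerical polynomials and the weights, all semistable objects of type $\tau_p$ share the same reduced parabolic Hilbert polynomial $\rphilb{\tau_p}$, and one should really think of the subcategory as that of all parabolic semistable sheaves having this common reduced parabolic Hilbert polynomial.

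First I would establish the abelian structure. For a non-zero morphism $f\colon \psheaf{F}\ra \psheaf{E}$ between two semistable parabolic sheaves of type $\tau_p$, the image $\mathrm{im}(f)$ carries the induced parabolic structure as a subsheaf of $\psheaf{E}$, while the kernel and cokernel inherit canonical parabolic structures. Semistability of $\psheaf{E}$ gives $\rphilb{\mathrm{im}(f)}\leq \rphilb{\psheaf{E}}$; semistability of $\psheaf{F}$ applied to $\ker(f)\subset \psheaf{F}$, together with additivity of the parabolic Hilbert polynomial in the short exact sequence $0\ra \ker(f)\ra \psheaf{F}\ra \mathrm{im}(f)\ra 0$, yields $\rphilb{\psheaf{F}}\leq \rphilb{\mathrm{im}(f)}$. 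Since $\rphilb{\psheaf{F}} = \rphilb{\psheaf{E}}$, equality propagates to $\ker(f)$, $\mathrm{im}(f)$ and $\mathrm{coker}(f)$. A short direct argument then shows each of these is itself semistable with the same reduced parabolic Hilbert polynomial, and that the canonical morphism $\mathrm{coim}(f)\ra \mathrm{im}(f)$ is an isomorphism. To handle the technicality that these subquotients might involve non-saturated parabolic subsheaves, I would invoke the remark after Definition \ref{MY-defn} reducing stability tests to saturated subsheaves with induced parabolic structures.

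Next I would establish the Noetherian and Artinian properties using Theorem~\ref{boundedness}. Any chain in the subcategory gives rise to a chain of parabolic subsheaves $\psheaf{F}\subset \psheaf{E}$ with $\rphilb{\psheaf{F}} = \rphilb{\psheaf{E}}$, so by boundedness the set of possible Hilbert polynomials for the underlying sheaves $F_i(\sheaf{F})$ is finite. Since $X$ is projective and each filtration step $F_i(\sheaf{F})\subseteq F_i(\sheaf{E})$ is a coherent subsheaf, standard Noetherian/Artinian properties for coherent sheaves on a projective scheme force both ascending and descending chains of such parabolic subsheaves to stabilize.

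Finally, characterizing the simple objects is essentially tautological once the abelian structure is in place: a proper non-zero subobject in this subcategory is precisely a proper parabolic subsheaf realizing equality in \eqref{eq:pss-ineq}, so simplicity is equivalent to the strict inequality for every proper parabolic subsheaf, i.e.\ to parabolic stability. The main obstacle I anticipate is the careful bookkeeping of the \emph{parabolic} structure on kernels, images and cokernels (including the interplay between the filtration $F_\bullet$ and the weights $\alpha_\bullet$), ensuring that the elementary see-saw on numerical parabolic Hilbert polynomials translates faithfully into statements about morphisms in $\PSh$.
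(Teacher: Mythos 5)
The paper does not prove this proposition: it is imported verbatim from Schl\"uter's thesis [Sc11, Proposition 4.4.3], whose proof is precisely the Huybrechts--Lehn-style argument you outline (see-saw on the reduced parabolic Hilbert polynomial to get the abelian structure, multiplicity and boundedness arguments for the chain conditions, and the tautological identification of simple objects with stable ones). Your sketch is correct in outline, and your observation that the subcategory must really be taken to consist of semistable parabolic sheaves sharing the \emph{common reduced} parabolic Hilbert polynomial determined by $\tau_p$ (rather than literally having type $\tau_p$, which is not preserved by subobjects and quotients) is exactly the right reading of the statement.
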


From Proposition \ref{ss-prop-2}, it follows that a parabolic semistable sheaf $\psheaf{E}$ having 
parabolic type $\tau_p$ admits a Jordan-H\"older filtration
$$
\{0\} = \sheaf{E}_{0_*} \subset \sheaf{E}_{1_*} \subset \sheaf{E}_{2_*} \subset \cdots \subset 
\sheaf{E}_{k_*} = \psheaf{E}
$$
such that the successive quotients $\sheaf{E}_{*_i+1}/\sheaf{E}_{*_i}$ are stable parabolic sheaves
having parabolic type $\tau_p$. The associated graded object is 
$$
\mathrm{gr}(\psheaf{E}) = \oplus_{i=0}^{k-1} \sheaf{E}_{*_i+1}/\sheaf{E}_{*_i}
$$
which does not depend on the choice of the filtration upto an isomorphism. 

We say that two parabolic semistable sheaves $\psheaf{E}$ and $\psheaf{F}$ are $S$-equivalent if
the associated graded objects $\mathrm{gr}(\psheaf{E})$ and $\mathrm{gr}(\psheaf{F})$ are
isomorphic.

Now onwards, in this section we choose $m \gg n \gg 0$ such that the following holds:
\begin{enumerate}
\item[(C1)] All parabolic semistable sheaves of parabolic type $\tau_p$ are $n$-regular.
\item[(C2)] The Le Potier-Simpson estimates hold, i.e., $n \geq P_{LS}$, for $P_{LS}$ as in
Proposition \ref{LC-estimate}.
\item[(C3)] $\struct{X}(m-n)$ is regular.
\end{enumerate}

Let $\psheaf{E}$ be any parabolic sheaf of parabolic type $\tau_p$ which is $n$-regular. For each $j$,
let 
$$
ev_j \colon H^0(F_j(\sheaf{E})(n))\otimes \struct{X}(-n)\ra \sheaf{E}
$$
be the natural evaluation map.
For subspaces $V_j' \subseteq H^0(F_j(\sheaf{E})(n))$, let $E'_j$ and $F'_j$ be the image and 
kernel of restriction of $ev_j$ to $V_j'\otimes \struct{X}(-n)$. Let $\mathcal{S}$ be the
set of all sheaves $E'_j,\; F'_j$ that arises in this way, and all saturated subsheaves 
$\sheaf{E}'\subset \sheaf{E}$, where $\psheaf{E}$ is $n$-regular parabolic sheaf having parabolic type 
$\tau_p$ and $\pslope{\psheaf{E'}} \geq \pslope{\psheaf{E}}$. Then by Grothendieck lemma, the family 
$\mathcal{S}$ is bounded.

\begin{enumerate}
\item[(C4)] All the sheaves in $\mathcal{S}$ are $m$-regular. 
\item[(C5)] For a parabolic sheaf $\psheaf{E}$ of parabolic type $\tau_p$, let $P_j(k) = \chi(F_j(\sheaf{E})(k))$.
Then for any integers $c_j \in \{0, 1, \dots, P_j(n)\}$ and sheaves $E' \in \mathcal{S}$, 
the polynomial relation $P_\sheaf{E}\sum_{j}\varepsilon_j c_j \sim P_{\sheaf{E}'}\pH{\psheaf{E}}(n)$ is 
equivalent to the relation $P_\sheaf{E}(m)\sum_{j}\varepsilon_j c_j \sim P_{\sheaf{E}'}(m)\pH{\psheaf{E}}$, 
where $\sim$ is any of $\leq$ or $<$ or $=$.
\end{enumerate}

\subsection{Semistability of filtered Kronecker modules}
In \cite{AD20}, we gave the moduli construction of filtered quiver representation with the appropriate notion 
of semistability using rank and degree functions. In the following, we will consider a specific rank and degree, 
which corresponds to the stability of parabolic sheaves. 

Let 
$$M_\bullet : M = M_1 \supset M_2 \supset \cdots \supset M_\ell \supset M_{\ell+1}$$ be a
filtered Kronecker module, where $M_i$'s are Kronecker modules. The category of 
all filtered Kronecker modules will be denoted as $\FKM$. We define a slope 
of $M_\bullet$ as
\begin{equation}\label{ss-slope}
\mu(M_\bullet) := \frac{\sum_{i=1}^{\ell+1} \varepsilon_i \dim M_{i1}}{\dim M_{12}},
\end{equation}
which takes values in $[0, \infty]$.

Let $\dv = (d_{11}, d_{12}, \dots, d_{(\ell+1) 1}, d_{(\ell+1) 2})$ be a dimension 
vector of $\KAl$. If $M_\bullet$ is an object of $\FKM$ having dimension vector 
$\dv$, we set $\mathrm{rk}(\dv) = d_{12}$. 

Let $M_\bullet'$ be a non-zero subrepresentation of $M_\bullet$ in the category 
of filtered Kronecker modules. We say that $M_\bullet'$ is \emph{degenerate} 
if $M_{12}' = 0$.

Let $\Theta_{j1} = \varepsilon_j$ and $\Theta_{j2} = 0$. Then, we get degree function
$$
\Theta(\dv(M_\bullet)):= \sum_{w\in (\KAl)_0} \Theta_w \dv(M)_w \,.
$$

If $M_\bullet$ is an object of $\FKM$ having dimension vector 
$\dv$ and $M_\bullet'$ is a subrepresentation of $M_\bullet$,
we set
\begin{equation}\label{eq-theta-defn}
\theta(M_\bullet') := \big(\sum_{j=1}^{\ell+1} (\Theta_{j1} \dim M_{j1}')\big)d_{12} - 
\big(\sum_{j=1}^{\ell+1} (\Theta_{j1} d_{j1})\big) \dim M'_{12}
\end{equation}

where $\Theta_{j1} = \varepsilon_j$ and $\Theta_{j2} = 0$ for each $j$.

We say that $M_\bullet$ is $\theta$-semistable if $\theta(M_\bullet') \leq 0$ 
for all subrepresentations $M'_\bullet$ of $M_\bullet$.
Note that the slope $\mu(M'_\bullet)$ is well-defined for all non-degenerate 
subrepresentations of $M_\bullet$. 

\begin{lemma}\label{ss-lemma-3}
Let $\psheaf{E}$ be an $n$-regular parabolic sheaf having parabolic type $\tau_p$, and let 
$M_\bullet := \Psi(\psheaf{E})$ be a corresponding filtered Kronecker module. 
Then, $M_\bullet$ does not have any degenerate subrepresentation.
\end{lemma}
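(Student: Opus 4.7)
The plan is to show that any subrepresentation $M'_\bullet\subseteq M_\bullet=\Psi(\psheaf{E})$ with $M'_{12}=0$ must be the zero object. I will proceed in two steps: first propagate the vanishing from $M'_{12}$ along the filtration to obtain $M'_{i2}=0$ for all $i$, and then use the Kronecker arrow at each vertex together with $n$-regularity of $\psheaf{E}$ to force $M'_{i1}=0$.

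For the first step, recall that the chain maps $\beta_i^j\colon M_{(i+1)j}\to M_{ij}$ of $M_\bullet=\Psi(\psheaf{E})$ are obtained by applying $\Phi=\Hom[X]{T}{-}$ to the inclusions $F_{i+1}(\sheaf{E})\hookrightarrow F_i(\sheaf{E})$ of the parabolic filtration \eqref{eq-par-fil}. Since $\Phi$ preserves monomorphisms, each $\beta_i^j$ is injective, and therefore the restriction to any subrepresentation, $M'_{(i+1)j}\to M'_{ij}$, is again injective. The inclusion chain $M'_{(\ell+1)2}\hookrightarrow\cdots\hookrightarrow M'_{22}\hookrightarrow M'_{12}=0$ then forces $M'_{i2}=0$ for every $i=1,\dots,\ell+1$.

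For the second step, fix $i$ and examine the Kronecker arrow $H_i$ at vertex $i$. The subrepresentation condition requires the multiplication map $\alpha_i\colon H^0(F_i(\sheaf{E})(n))\otimes H\to H^0(F_i(\sheaf{E})(m))$ to send $M'_{i1}\otimes H$ into $M'_{i2}=0$. Suppose for contradiction that some $v\in M'_{i1}$ is nonzero, and view $v$ as a nonzero morphism $\struct{X}(-n)\to F_i(\sheaf{E})$; pick a point $x\in X$ at which this morphism is nonzero. By condition (C3), $\struct{X}(m-n)$ is regular, hence globally generated, so there exists $h\in H=H^0(\struct{X}(m-n))$ with $h(x)\neq 0$, and such $h$ is a unit on the stalk at $x$. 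Then $h\cdot v\colon \struct{X}(-m)\to F_i(\sheaf{E})$ is nonzero at $x$ and hence a nonzero element of $H^0(F_i(\sheaf{E})(m))$, contradicting $\alpha_i(v\otimes h)=0$. Therefore $M'_{i1}=0$ for every $i$, and combined with the first step this yields $M'_\bullet=0$.

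The argument is genuinely short; the only point to organize carefully is the direction in which vanishing propagates in step one and the use of global generation of $\struct{X}(m-n)$ from (C3) in step two to produce a section $h$ that is nonvanishing at a prescribed point. Note that no information about semistability or the specific parabolic type $\tau_p$ is used, so the conclusion holds for every $n$-regular parabolic sheaf.
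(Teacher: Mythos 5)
Your proof is correct, and it fills in precisely the detail that the paper outsources with the one-line citation to the observation in the proof of \cite[Lemma 8.8]{GRT}: a nonzero section $v\in H^0(F_i(\sheaf{E})(n))$ stays nonzero after multiplication by a suitable $h\in H^0(\struct{X}(m-n))$ because that line bundle is globally generated, so a degenerate subrepresentation is forced to vanish at every vertex once the injective chain maps propagate $M'_{12}=0$ to all $M'_{i2}$. This is essentially the same approach as the cited source (which phrases the same point via $m$-regularity of the image sheaves rather than pointwise evaluation), so no further comment is needed.
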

\begin{proof}
This follows from the observation in the proof of \cite[Lemma 8.8]{GRT}.  
\end{proof}

We can reformulate the $\theta$-semistability in terms of slope semistability.

\begin{proposition}\label{ss-prop-1}
Assume that a filtered Kronecker module $M_\bullet$ having dimension vector $\dv$
does not have any degenerate subrepresentation. Then, $M_\bullet$ is 
$\theta$-semistable if and only if for all non-zero subrepresentations 
$M'_\bullet$ of $M_\bullet$, we have $\mu(M'_\bullet)\leq \mu(M_\bullet)$.
\end{proposition}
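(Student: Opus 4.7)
The plan is to unwind the definitions and reduce the equivalence to clearing denominators, using the non-degeneracy hypothesis to guarantee that all relevant slopes are well-defined. First I would apply the hypothesis to the improper subrepresentation $M_\bullet\subseteq M_\bullet$ itself to conclude that $d_{12} = \dim M_{12} > 0$, and simultaneously deduce that every non-zero subrepresentation $M'_\bullet$ satisfies $\dim M'_{12} > 0$, since otherwise it would constitute a degenerate subrepresentation of $M_\bullet$, contrary to hypothesis. This in particular means that $\mu(M_\bullet)$ and $\mu(M'_\bullet)$ are well-defined finite numbers for every non-zero $M'_\bullet$ that we need to test.

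Next I would substitute $\Theta_{j1} = \varepsilon_j$ and $\Theta_{j2} = 0$ in \eqref{eq-theta-defn} to rewrite
\[
\theta(M'_\bullet) = \Big(\sum_{j=1}^{\ell+1} \varepsilon_j \dim M'_{j1}\Big)\, d_{12} \;-\; \Big(\sum_{j=1}^{\ell+1} \varepsilon_j d_{j1}\Big)\, \dim M'_{12}.
\]
For any non-zero subrepresentation $M'_\bullet$, both $d_{12}$ and $\dim M'_{12}$ are positive integers, so dividing the inequality $\theta(M'_\bullet)\leq 0$ through by the positive quantity $d_{12}\,\dim M'_{12}$ transforms it directly into
\[
\mu(M'_\bullet)\;=\;\frac{\sum_j \varepsilon_j \dim M'_{j1}}{\dim M'_{12}}\;\leq\; \frac{\sum_j \varepsilon_j d_{j1}}{d_{12}}\;=\;\mu(M_\bullet),
\]
and the reverse direction is the same manipulation run backwards. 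Running this equivalence over all non-zero subrepresentations (which, by hypothesis, are precisely the non-degenerate ones) yields both implications.

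I do not expect any real obstacle; the only point worth stressing is that the non-degeneracy hypothesis is used twice, once to ensure that $\mu(M_\bullet)$ is defined at all, and once to guarantee that testing $\theta(M'_\bullet)\leq 0$ only on non-degenerate subrepresentations is enough for $\theta$-semistability. Without the assumption a subrepresentation with $\dim M'_{12}=0$ but $\sum_j \varepsilon_j \dim M'_{j1}>0$ would $\theta$-destabilize $M_\bullet$ yet have no finite slope, so the slope reformulation would fail; the hypothesis is exactly what rules this out.
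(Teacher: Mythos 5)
Your proposal is correct and follows essentially the same route as the paper: both arguments substitute $\Theta_{j1}=\varepsilon_j$, $\Theta_{j2}=0$ into the definition of $\theta$ and observe the identity $\theta(M'_\bullet) = d_{12}\,\dim M'_{12}\,(\mu(M'_\bullet)-\mu(M_\bullet))$, from which the equivalence follows since $d_{12}\,\dim M'_{12}>0$ by non-degeneracy. Your explicit remarks on where the non-degeneracy hypothesis is used are a welcome elaboration of what the paper leaves implicit, but the underlying argument is identical.
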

\begin{proof}
Note that
$$
\theta(M_\bullet') := \big(\sum_{j=1}^{\ell+1} (\varepsilon_j \dim M_{j1}')\big)d_{12} - 
\big(\sum_{j=1}^{\ell+1} (\varepsilon_j d_{j1})\big) \dim M'_{12}
$$
Hence, we have
$$
\theta(M'_\bullet) = d_{12}\dim M'_{12}(\mu(M'_\bullet) - \mu(M_\bullet)).
$$
From this the assertion follows.
\end{proof}

\begin{definition}\rm{
Let $M'_\bullet$ and $M''_\bullet$ be two subobjects of $M_\bullet$. We say that 
$M'_\bullet$ is subordinate to $M''_\bullet$ if 
$$
M'_{j1} \subseteq M''_{j1} ~\mbox{for 
each}~ j, ~\mbox{and}~ M''_{12} \subseteq M'_{12}.
$$

We say that $M'_\bullet$ is tight if whenever it is subordinate to a subobject $M''_\bullet$ of $M_\bullet$, we have
$M'_{j1} = M''_{j1}$ for each $j$, and $M''_{12} = M'_{12}$. In this case, we have $\mu(M'_\bullet) = \mu(M''_\bullet)$.
}
\end{definition}

\begin{lemma}\label{ss-lemma-1}
Let $\widetilde{M}_\bullet$ be a subobject of $M_\bullet$. Then $\widetilde{M}_\bullet$ 
is subordinate to some tight subobject $M'_\bullet$ of $M_\bullet$.
\end{lemma}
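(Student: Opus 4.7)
The plan is to find a tight enlargement of $\widetilde{M}_\bullet$ by a finite-dimensional maximality argument along the subordination partial order. First I would verify transitivity of subordination: if $M'_\bullet$ is subordinate to $M''_\bullet$ and $M''_\bullet$ is subordinate to $M'''_\bullet$, then chaining the inclusions $M'_{j1} \subseteq M''_{j1} \subseteq M'''_{j1}$ and $M'''_{12} \subseteq M''_{12} \subseteq M'_{12}$ shows that $M'_\bullet$ is subordinate to $M'''_\bullet$. This confirms that subordination gives a genuine partial order on subobjects of $M_\bullet$.

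Next I would consider the family
\[
\mathcal{F} := \{M'_\bullet \subseteq M_\bullet \text{ in } \FKM \mid \widetilde{M}_\bullet \text{ is subordinate to } M'_\bullet\},
\]
which is non-empty since it contains $\widetilde{M}_\bullet$ itself. For $M'_\bullet \in \mathcal{F}$ define the integer
\[
f(M'_\bullet) := \sum_{j=1}^{\ell+1} \dim_\Bbbk M'_{j1} - \dim_\Bbbk M'_{12}.
\]
Under the subordination order, $f$ is strictly monotone (any proper inclusion $M'_{j1} \subsetneq M''_{j1}$ or $M''_{12} \subsetneq M'_{12}$ strictly increases $f$), and the dimensions $\dim M'_{j1}$ are bounded above by $\dim M_{j1}$ while $\dim M'_{12}$ is bounded below by $0$. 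Hence $f$ attains only finitely many values on $\mathcal{F}$, and I pick some $M'_\bullet \in \mathcal{F}$ at which $f$ is maximal.

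Finally I would verify tightness of this maximizer: if $M'_\bullet$ is subordinate to any subobject $M''_\bullet$ of $M_\bullet$, then by transitivity $M''_\bullet \in \mathcal{F}$, so $f(M''_\bullet) \leq f(M'_\bullet)$ by maximality. Combined with $f(M''_\bullet) \geq f(M'_\bullet)$ coming from subordination, strict monotonicity forces $M'_{j1} = M''_{j1}$ for every $j$ and $M''_{12} = M'_{12}$, which is precisely the tightness condition. Since $\widetilde{M}_\bullet$ is subordinate to $M'_\bullet$ by construction, the lemma follows.

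The only subtle point is that a subobject in $\FKM$ carries additional vector-space data at the vertices $(j,2)$ for $j \geq 2$ which is not tracked by subordination; however $f$ is insensitive to these components as well, so the maximization is unaffected. The main potential obstacle would be if the set $\mathcal{F}$ failed to be closed under the operations one might want (e.g.\ taking sums or intersections of subobjects), but the argument above only needs the existence of a single $f$-maximizer, which is guaranteed by finite-dimensionality alone without any such closure properties.
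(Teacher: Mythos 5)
Your argument is correct, but it proves existence of a tight enlargement by an abstract extremal principle, whereas the paper constructs the tight subobject explicitly. The paper sets $M'_{12} := \rho_1(\widetilde{M}_{11}\otimes H)$ (the smallest admissible space at the vertex $1_2$), lets $M'_{11}$ be the largest subspace of $M_{11}$ whose image under $\rho_1$ lands in $M'_{12}$, and puts $M'_{j1} = M'_{11}\cap M_{j1}$, $M'_{j2} = \rho_j(M'_{j1}\otimes H)$; tightness is then a short direct computation. Your route --- checking transitivity of subordination, maximizing $f(M'_\bullet) = \sum_j \dim M'_{j1} - \dim M'_{12}$ over the (nonempty, $f$-bounded) family $\mathcal{F}$, and using strict monotonicity of $f$ under proper subordination to force the required equalities --- is sound: $f$ takes only finitely many integer values on $\mathcal{F}$, so a maximizer exists even though $\mathcal{F}$ may be infinite, and your observation that the untracked data at the vertices $(j,2)$, $j\geq 2$, is irrelevant to both $f$ and the definition of tightness is the right thing to note. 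What your argument buys is brevity and the fact that you never have to verify that an explicitly assembled collection of subspaces is closed under the structure maps $\rho_j$, $f_{j1}$, $f_{j2}$ (a point the paper's proof must, and does, implicitly handle). What the paper's construction buys is a canonical ``saturation'' of $\widetilde{M}_\bullet$, which matches the geometric picture used immediately afterwards in Lemma \ref{ss-lemma-2}, where the tight subobject is compared with $\Psi$ of the subsheaf generated by the sections $M'_{j1}$. Either proof suffices for the way the lemma is used in Theorem \ref{ss-thm-1}, namely to reduce the semistability check to tight subobjects.
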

\begin{proof}
Let $M_\bullet$ be a filtered Kronecker module. In other words, we have linear maps
$$
\rho_j \colon M_{j1}\otimes H\ra M_{j2}
$$
together with injective linear maps
$$
f_{j1}\colon M_{(j+1)1}\ra M_{j1} \quad \mbox{and} \quad f_{j2}\colon M_{(j+1)2}\ra M_{j2}
$$
such that $f_{j2} \circ \rho_{j+1} = \rho_j \circ f_{j1}$ for each $j = 1, 2, \dots , \ell$. 

Let $M'_{12} := \rho_1(\widetilde{M}_{11}\otimes H)$ and
$M'_{11} := \{v\in M_{11}\;|\; \alpha_j(v\otimes h) \in M'_{12} \; \mbox{for all}\; h\in H\}$. For $j\geq 2$, we define
$M'_{j1} := (f_{11}\circ f_{21} \circ \cdots \circ f_{(j-1)1})^{-1}(M'_{11})$ and $M'_{j2} := \rho_j(M'_{j1}\otimes H)$. Since
each $f_{j1}$ is injective, we have $M'_{j1} = M'_{11}\cap M_{j1}$ for each $j$.

Let $M'_\bullet$ be a subobject of $M_\bullet$ given by $\{M'_{j1}, M'_{j2}\}$ with the induced maps
from $\rho'$s and $f'$s. From the definition, it follows that $\widetilde{M}_{11}\subseteq M'_{11}$ and 
$\widetilde{M}_{12}\supseteq M'_{12}$. Since $f_{j1}(\widetilde{M}_{(j+1)1})\subseteq \widetilde{M}_{j1}$ and
$\widetilde{M}_{11}\subseteq M'_{11}$, by induction, we can conclude that $\widetilde{M}_{j1}\subseteq M'_{j1}$ for all
$j\geq 2$. This proves that $\widetilde{M}_\bullet$ is subordinate to $M'_\bullet$.

To see that $M'_\bullet$ is a tight subobject of $M_\bullet$, let $M''_\bullet$ be a subobject of $M_\bullet$ such that
$M'_\bullet$ is subordinate to $M''_\bullet$. That is, we have $M'_{j1}\subseteq M''_{j1}$ and $M''_{12}\subseteq M'_{12}$.
Note that
$$
M'_{12} = \rho_1(\widetilde{M}_{11}\otimes H) \subseteq \rho_1(M'_{11}\otimes H) 
\subseteq \rho_1(M''_{11}\otimes H) \subseteq M''_{12} 
$$
This proves that $M''_{12} = M'_{12}$, and hence by definition of $M'_{11}$, we have $M'_{11} = M''_{11}$. 
Observe that $M''_{j1} \subseteq M''_{11} = M'_{11}$, and hence $M''_{j1} \subseteq (M'_{11}\cap M_{j1}) = M'_{j1}$ for $j\geq 2$.
This completes the proof that $M'_\bullet$ is a tight subobject of $M_\bullet$.
\end{proof}

\begin{lemma}\label{ss-lemma-2}
Let $\psheaf{E}$ be an $n$-regular parabolic sheaf with parabolic type $\tau_p$. Let 
$M_\bullet = \Psi(\psheaf{E})$, and let $M'_\bullet$ be a subobject of $M_\bullet$. 
Then there exists a subsheaf $\sheaf{E}'$ of $\sheaf{E}$ such that $M'_\bullet$ is subordinate 
to $\Psi(\psheaf{E}')$. In particular, if $M'_\bullet$ is tight and non-degenerate, then 
$\mu(M'_\bullet) = \mu(\Psi(\psheaf{E}'))$.
\end{lemma}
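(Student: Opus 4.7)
The plan is to adapt \cite[Proposition 5.5]{AK07} to the filtered parabolic setting, by building a candidate subsheaf $\sheaf{E}'$ directly out of $M'_{11}$ and then verifying the two inclusions that constitute subordination.

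First I define $\sheaf{E}'\subseteq \sheaf{E}$ as the image of the evaluation map $M'_{11}\otimes \struct{X}(-n)\ra \sheaf{E}$, where $M'_{11}\subseteq H^0(\sheaf{E}(n))=M_{11}$, and equip it with the induced filtration $F_j(\sheaf{E}') := F_j(\sheaf{E})\cap \sheaf{E}'$ together with the original weights $\alpha_\bullet$ (after discarding equalities, as in the remark following Definition \ref{MY-defn}). Because $M'_\bullet$ is a subrepresentation of $M_\bullet$, the transition maps $M'_{(j+1),1}\hookrightarrow M'_{j,1}$ are restrictions of those for $M_\bullet$, so viewing everything inside $M_{11}$ we have $M'_{j,1}\subseteq M'_{11}$, with each $s\in M'_{j,1}$ simultaneously (a) factoring through $\sheaf{E}'$, by definition of $\sheaf{E}'$, and (b) lying in $H^0(F_j(\sheaf{E})(n))$ by the containment $M'_{j,1}\subseteq M_{j,1}$. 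Hence $s\in H^0((F_j(\sheaf{E})\cap \sheaf{E}')(n)) = \Psi(\sheaf{E}')_{j,1}$, proving $M'_{j,1}\subseteq \Psi(\sheaf{E}')_{j,1}$ for every $j$.

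Next I establish the reverse inclusion $\Psi(\sheaf{E}')_{12} = H^0(\sheaf{E}'(m))\subseteq M'_{12}$, which is the genuinely quantitative step. By construction $\sheaf{E}'$ and the kernel $\sheaf{K}$ of the presentation $M'_{11}\otimes \struct{X}(-n)\twoheadrightarrow \sheaf{E}'$ both belong to the bounded family $\mathcal{S}$ from Section \ref{sec-ss-analysis}, and therefore are $m$-regular by condition (C4). Twisting the short exact sequence $0\to \sheaf{K}\to M'_{11}\otimes \struct{X}(-n)\to \sheaf{E}'\to 0$ by $\struct{X}(m)$ and using $H^1(\sheaf{K}(m)) = 0$ yields a surjection $M'_{11}\otimes H \twoheadrightarrow H^0(\sheaf{E}'(m))$. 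But the composition $M'_{11}\otimes H\to H^0(\sheaf{E}(m)) = M_{12}$ is exactly the restriction of the Kronecker multiplication map, and its image lies in $M'_{12}$ because $M'_\bullet$ is a subrepresentation. Combining yields $\Psi(\sheaf{E}')_{12}\subseteq M'_{12}$, finishing the subordination check. For the ``in particular'' clause, tightness upgrades both subordination inclusions to equalities $M'_{j,1}=\Psi(\sheaf{E}')_{j,1}$ and $M'_{12}=\Psi(\sheaf{E}')_{12}$, and non-degeneracy keeps the denominator in \eqref{ss-slope} nonzero, so the slope formula gives $\mu(M'_\bullet)=\mu(\Psi(\sheaf{E}'))$ at once.

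The main obstacle is the second inclusion in Step two: it is not a formal consequence of the adjunction $(\Psi^\vee,\Psi)$, and genuinely needs the quantitative regularity set-up (C1)--(C5), specifically (C4), to guarantee that both $\sheaf{E}'$ and its presentation kernel $\sheaf{K}$ lie in an $m$-regular bounded family so that the required $H^1$-vanishing holds. A minor technical subtlety is that $\sheaf{E}'$ need not be saturated in $\sheaf{E}$, so the induced filtration gives a parabolic structure only in the extended sense of the preliminaries; since $\Psi$ is computed termwise this has no effect on either inclusion or on the final slope comparison.
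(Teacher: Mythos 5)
Your proposal is correct and follows essentially the same route as the paper: define $\sheaf{E}'$ as the image of the evaluation map on $M'_{11}$, give it the induced filtration, check the first subordination inclusion by factoring sections of $M'_{j1}$ through both $\sheaf{E}'$ and $F_j(\sheaf{E})$, and obtain the reverse inclusion $H^0(\sheaf{E}'(m))\subseteq M'_{12}$ from the $m$-regularity of the presentation kernel guaranteed by (C4) together with the subrepresentation condition. Your identification of (C4) as the genuinely quantitative ingredient, and the remark about $\sheaf{E}'$ not being saturated, match the paper's treatment.
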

\begin{proof}
Let $\psheaf{E}: E_1 \supset E_2 \supset \cdots E_\ell \supset E(-D)$, and let
$M_\bullet : M_1 \supset M_2 \supset \cdots M_\ell \supset M(-D)$. Let $M'_\bullet$ be a subobject of 
$M_\bullet$. Let
$E'_j := \mathrm{Im}(M'_{j1} \otimes \struct{X}(-n)\ra E)$, and $\sheaf{E}' := E'_1$.
Then, the $\sheaf{E}'$ being a subsheaf of $\sheaf{E}$, we get an induced parabolic structure on $\sheaf{E}'$,
which we shall denote by $\psheaf{E}'$. Note that
$$
M'_{j1} \subset H^0(E'_j(n)) \subset H^0(F_j(\sheaf{E}')(n))\,.
$$
For each $j$, there is a short exact sequence $0 \ra F_j \ra M'_{j1}\otimes \struct{X}(-n)\ra E'_j\ra 0$.
By (C4), $F_j$ is $m$-regular, and hence the map $M'_{j1}\otimes H^0(\struct{X}(m-n))\ra H^0(E'_j(m))$ is
surjective.  Since $M'_\bullet$ is a subobject of $M_\bullet$, it follows that 
$$
H^0(F_1(\sheaf{E}')(m)) =  H^0(\sheaf{E}'(m))\subset M'_{12}\,.
$$
This proves that $M'_\bullet$ is subordinate to $\Psi(\psheaf{E}')$.
\end{proof}

\begin{remark}\rm{
Let $\mathrm{sk}(\FKM)$ be the skeleton of $\FKM$, that is, the set of isomorphism 
classes of objects of $\FKM$.
From the above Lemma \ref{ss-lemma-3}, we get a rank function
$$
\mathrm{rk}\colon \mathrm{sk}(\FKM) \ra \mathbb{N}
$$
given by $\mathrm{rk}(\dv) = d_{12}$. 

We can consider a slope function $\mu \colon \mathrm{sk}(\FKM)-\{0\}\ra \mathbb{Q}$ 
as follows:
$$
\mu_{\Theta, \mathrm{rk}}(M):= \frac{\Theta(\dv(M))}{\mathrm{rk}(\dv(M))}\,.
$$
Note that the slope defined in \eqref{ss-slope} agree with $\mu_{\Theta, \mathrm{rk}}$
on $\FKM$.
}
\end{remark}

\begin{theorem}\label{ss-thm-1}
A parabolic sheaf $\psheaf{E}$ of parabolic type $\tau_p$ is parabolic semistable 
if and only if it is pure, $n$-regular
and $\Psi(\psheaf{E})$ is $\theta$-semistable.
\end{theorem}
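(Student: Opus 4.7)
The plan is to prove the two implications separately, using Proposition \ref{LC-estimate} as the sheaf-side numerical criterion, Proposition \ref{ss-prop-1} together with Lemma \ref{ss-lemma-3} on the Kronecker side to recast $\theta$-semistability as slope semistability, and Lemmas \ref{ss-lemma-1}--\ref{ss-lemma-2} to bridge the two via tight subobjects. Conditions (C1)--(C5) will be invoked precisely where each is needed.

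For the forward direction, assuming $\psheaf{E}$ is parabolic semistable of type $\tau_p$, purity is built into the type and $n$-regularity is (C1). To verify $\theta$-semistability of $M_\bullet := \Psi(\psheaf{E})$, I will check $\mu(M'_\bullet) \leq \mu(M_\bullet)$ for every non-zero subrepresentation $M'_\bullet$; this suffices by Lemma \ref{ss-lemma-3} and Proposition \ref{ss-prop-1}. Since subordination weakly shrinks the numerator and weakly enlarges the denominator of $\mu$, Lemma \ref{ss-lemma-1} reduces the problem to tight subobjects $M'_\bullet$. For such $M'_\bullet$, Lemma \ref{ss-lemma-2} yields a subsheaf $\sheaf{E}' \subseteq \sheaf{E}$ (with its induced parabolic structure $\psheaf{E}'$) such that $M'_\bullet$ is subordinate to $\Psi(\psheaf{E}')$, and tightness forces $\mu(M'_\bullet) = \mu(\Psi(\psheaf{E}'))$. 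Using $n$-regularity of $\psheaf{E}$ and the $m$-regularity of the sheaves $E'_j$ of Lemma \ref{ss-lemma-2} (these lie in the family $\mathcal{S}$, hence are $m$-regular by (C4)), the slopes $\mu(\Psi(\psheaf{E}'))$ and $\mu(\Psi(\psheaf{E}))$ can be rewritten in terms of Hilbert polynomials evaluated at $m$, and the desired inequality becomes the Le Potier--Simpson polynomial inequality of Proposition \ref{LC-estimate} evaluated at $m$; condition (C5) shows this evaluation is equivalent to the polynomial inequality itself, which holds by parabolic semistability of $\psheaf{E}$.

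For the converse, assume $\psheaf{E}$ is pure, $n$-regular, and $\Psi(\psheaf{E})$ is $\theta$-semistable. Given a proper subsheaf $\sheaf{E}' \subset \sheaf{E}$, the Remark after the definition of parabolic semistability permits me to replace $\sheaf{E}'$ by its saturation, and I further restrict to the case $\pslope{\psheaf{E}'} \geq \pslope{\psheaf{E}}$, since the remaining subsheaves do not destabilize. Such $\sheaf{E}'$ belong to the bounded family $\mathcal{S}$, so by (C4) each $F_j(\sheaf{E}')$ is $m$-regular; setting $M'_{j1} := H^0(F_j(\sheaf{E}')(n))$ and $M'_{j2} := H^0(F_j(\sheaf{E}')(m))$ then defines a bona fide subrepresentation $M'_\bullet$ of $\Psi(\psheaf{E})$. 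Applying $\theta$-semistability yields
\[
\Big(\sum_{i=1}^{\ell+1}\varepsilon_i\, h^0(F_i(\sheaf{E}')(n))\Big)\, P_{\sheaf{E}}(m) \;\leq\; \pH{\psheaf{E}}(n)\, P_{\sheaf{E'}}(m),
\]
which (C5) upgrades to the polynomial inequality of Proposition \ref{LC-estimate}, giving parabolic semistability of $\psheaf{E}$.

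The main obstacle I anticipate is the accurate matching of slopes of subrepresentations of $\Psi(\psheaf{E})$ with ratios of Hilbert polynomials of subsheaves of $\sheaf{E}$ when those subsheaves need not be $n$-regular: the Kronecker slope mixes $h^0$-values at twist $n$ in the numerator with values at twist $m$ in the denominator, and uniform control over the potentially destabilizing subsheaves is only possible because the family $\mathcal{S}$ is bounded. The conditions (C4) and (C5) are engineered precisely to perform this translation in both directions, and careful bookkeeping will be needed to confirm that the subsheaves produced by Lemma \ref{ss-lemma-2} do lie in $\mathcal{S}$ and that the subspaces defined from $\sheaf{E}'$ in the converse direction indeed satisfy the ladder-compatibility required to be a subobject in $\FKM$.
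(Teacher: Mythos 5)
Your proposal is correct and follows essentially the same route as the paper's proof: both directions reduce $\theta$-semistability to slope semistability via Lemma \ref{ss-lemma-3} and Proposition \ref{ss-prop-1}, pass to tight subobjects through Lemmas \ref{ss-lemma-1} and \ref{ss-lemma-2}, and translate between Kronecker slopes and the Le Potier--Simpson inequality of Proposition \ref{LC-estimate} using (C1), (C4) and (C5). Your explicit remarks on the monotonicity of $\mu$ under subordination and on why the relevant subsheaves lie in the bounded family $\mathcal{S}$ make precise two points the paper leaves implicit, but the argument is the same.
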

\begin{proof}
Suppose that $\psheaf{E}$ is parabolic semistable. Then, by definition, it is pure 
and $n$-regular by (C1). By Lemma \ref{ss-lemma-3} and Proposition \ref{ss-prop-1},
we need to check that $\mu(M'_\bullet)\leq \mu(M_\bullet)$ for all tight subobjects 
$M'_\bullet$ of $M_\bullet$.
Let $M'_\bullet$ be a tight subobject of $M_\bullet$. By Lemma \ref{ss-lemma-2}, 
we get a parabolic subsheaf $\psheaf{E}'$ of $\psheaf{E}$ such that 
$\mu(M'_\bullet) = \mu(\Psi(\psheaf{E}'))$. Since $\psheaf{E}$ is parabolic 
semistable, by Proposition \ref{LC-estimate}, we have
$$
\sum_{i=1}^{\ell+1} \varepsilon_i h^0(F_i(\sheaf{E'})(n))P_{\sheaf{E}} \leq 
\pH{\psheaf{E}}(n)P_{\sheaf{E'}}\,.
$$
This polynomial inequality implies the following numerical inequality:
$$
\sum_{i=1}^{\ell+1} \varepsilon_i h^0(F_i(\sheaf{E'})(n))P_{\sheaf{E}}(m) \leq 
\pH{\psheaf{E}}(n)P_{\sheaf{E'}}(m)\,.
$$
In other words,
$$
\mu(M'_\bullet) = 
\frac{\sum_{i=1}^{\ell+1} \varepsilon_i h^0(F_i(\sheaf{E'})(n))}{P_{\sheaf{E'}}(m)} 
\leq
\frac{\sum_{i=1}^{\ell+1} \varepsilon_i h^0(F_i(\sheaf{E})(n))}{P_{\sheaf{E}}(m)} 
= \mu(M_\bullet).
$$ 
This proves that $\Psi(\psheaf{E})$ is $\theta$-semistable.

Conversely, suppose that $\psheaf{E}$ is pure, $n$-regular and $\Psi(\psheaf{E})$ 
is $\theta$-semistable. Let $\sheaf{E}'$ be a saturated subsheaf of $\sheaf{E}$ 
such that $\pslope{\psheaf{E'}} \geq \pslope{\psheaf{E}}$.
Since $\Psi(\psheaf{E})$ is $\theta$-semistable, we have 
$\theta(\Psi(\psheaf{E}')) \leq 0$.
Note that
$$
\theta(\Psi(\psheaf{E}')) = 
\big(\sum_{i=1}^{\ell+1} \varepsilon_i h^0(F_i(\sheaf{E}')(n))\big) P_{\sheaf{E}}(m)
-  \big(\sum_{i=1}^{\ell+1} \varepsilon_i h^0(F_i(\sheaf{E})(n))\big) P_{\sheaf{E}'}(m)
$$
Hence, we have
$$
\big(\sum_{i=1}^{\ell+1} \varepsilon_i h^0(F_i(\sheaf{E}')(n))\big) P_{\sheaf{E}}(m)
\leq  \big(\sum_{i=1}^{\ell+1} \varepsilon_i h^0(F_i(\sheaf{E})(n))\big) P_{\sheaf{E}'}(m)
$$
By condition (C5), this numerical inequality is equivalent to the polynomial inequality
$$
\big(\sum_{i=1}^{\ell+1} \varepsilon_i h^0(F_i(\sheaf{E}')(n))\big) P_{\sheaf{E}}
\leq  \big(\sum_{i=1}^{\ell+1} \varepsilon_i h^0(F_i(\sheaf{E})(n))\big) P_{\sheaf{E}'}
$$
Now, using Proposition \ref{LC-estimate}, we can conclude that $\psheaf{E}$ is 
parabolic semistable.
\end{proof}

From the Theorem \ref{ss-thm-1} and Proposition \ref{prop-psh-emb}, we can deduce that the 
functor $\Psi$ induces an equivalence between the category $\PSh^\mathrm{ss}(\tau_p)$ consisting 
of semistable parabolic sheaves on $X$ having parabolic type $\tau_p$ and the full subcategory category 
$\mathcal{C}^\mathrm{ss}(\mu)$ of $\mathcal{C}$ consisting of semistable parabolic filtered Kronecker 
module having fixed slope $\mu$, which is determined by the parabolic type $\tau_p$. 
Since $\PSh^\mathrm{ss}(\tau_p)$ is an abelian category, it follows that Jordan-H\"older theorem holds
for $\mathcal{C}^\mathrm{ss}(\mu)$. In other words, if $M_\bullet$ is an object of 
$\mathcal{C}^\mathrm{ss}(\mu)$, then there exists a filtration
$$
\{0\} = M_{0_\bullet} \subset M_{1_\bullet} \subset M_{2_\bullet} \subset \cdots \subset 
M_{k_\bullet} = M_\bullet
$$
such that the successive quotients $M_{(i+1)_\bullet}/M_{i_\bullet}$ are stable objects
in $\mathcal{C}^\mathrm{ss}(\mu)$. The associated graded object is 
$$
\mathrm{gr}^{\mathrm{JH}}(M_\bullet) = \oplus_{i=0}^k M_{(i+1)_\bullet}/M_{i_\bullet}
$$
which does not depend on the choice of the filtration upto an isomorphism. 

We say that two objects $M_\bullet$ and $N_\bullet$ in $\mathcal{C}^\mathrm{ss}(\mu)$ are 
$S^{\mathrm{JH}}$-equivalent if the associated graded objects $\mathrm{gr}^\mathrm{JH}(M_\bullet)$ and 
$\mathrm{gr}^{\mathrm{JH}}(N_\bullet)$ are isomorphic.

\begin{proposition}\label{ss-prop-3}
Let $\psheaf{E}$ be a semistable parabolic sheaf having parabolic type $\tau_p$, and let 
$M_\bullet := \Psi(\psheaf{E})$ be the corresponding filtered Kronecker module. 
If $\sheaf{E}_{1_\bullet}\subset \sheaf{E}_{2_\bullet}$ are parabolic subsheaves of $\psheaf{E}$
having parabolic type $\tau_p$,
then
$$
\Psi(\sheaf{E}_{2_\bullet})/\Psi(\sheaf{E}_{1_\bullet})\cong \Psi(\sheaf{E}_{2_\bullet}/\sheaf{E}_{1_\bullet})\,.
$$
Moreover, we have
$$
\mathrm{gr}^{\mathrm{JH}}(M_\bullet) \cong \Psi(\mathrm{gr}(\psheaf{E}))\,. \quad \mbox{and} \quad
\mathrm{gr}(\psheaf{E}) \cong \Psi^\vee(\mathrm{gr}^{\mathrm{JH}}(M_\bullet))\,.
$$
\end{proposition}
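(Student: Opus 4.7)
The plan is to first establish the quotient formula $\Psi(\sheaf{E}_{2_\bullet})/\Psi(\sheaf{E}_{1_\bullet})\cong \Psi(\sheaf{E}_{2_\bullet}/\sheaf{E}_{1_\bullet})$ via the exactness of $\Phi$ on $n$-regular sheaves, and then deduce the Jordan--H\"older statements by applying this quotient formula termwise to the JH filtration of $\psheaf{E}$.

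For the first claim, since $\sheaf{E}_{1_\bullet}$, $\sheaf{E}_{2_\bullet}$, and their quotient all have parabolic type $\tau_p$, Proposition \ref{MY-ss-propn} together with condition (C1) ensures that each $F_i(\sheaf{E}_j)$ is $n$-regular (and also $m$-regular by the choice of $m\gg n$ and (C4)). Applying $\Phi = \Hom[X]{T}{-}$ to the short exact sequence
\[
0 \to F_i(\sheaf{E}_1) \to F_i(\sheaf{E}_2) \to F_i(\sheaf{E}_2)/F_i(\sheaf{E}_1) \to 0
\]
at each level $i$ yields a short exact sequence of Kronecker modules, because the vanishings $H^1(F_i(\sheaf{E}_2)(n)) = H^1(F_i(\sheaf{E}_2)(m)) = 0$ make $\Phi$ exact on this subcategory. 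These short exact sequences are compatible with the inclusion maps $F_{i+1}\hookrightarrow F_i$, so the cokernel $\Psi(\sheaf{E}_{2_\bullet})/\Psi(\sheaf{E}_{1_\bullet})$ computed levelwise, with its induced strict filtration and the same weights, coincides with $\Psi$ applied to the induced parabolic quotient $\sheaf{E}_{2_\bullet}/\sheaf{E}_{1_\bullet}$.

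For the Jordan--H\"older statements, I would apply the first claim inductively to the JH filtration $\{0\} = \sheaf{E}_{0_*} \subset \cdots \subset \sheaf{E}_{k_*} = \psheaf{E}$, producing a filtration $\{0\} = \Psi(\sheaf{E}_{0_*}) \subset \cdots \subset \Psi(\sheaf{E}_{k_*}) = M_\bullet$ in $\mathcal{C}^\mathrm{ss}(\mu)$ whose successive quotients are $\Psi(\sheaf{E}_{(i+1)_*}/\sheaf{E}_{i_*})$. By Theorem \ref{ss-thm-1} together with Proposition \ref{prop-psh-emb}, the functor $\Psi$ restricts to an equivalence of abelian categories $\PSh^\mathrm{ss}(\tau_p)\simeq \mathcal{C}^\mathrm{ss}(\mu)$, so it carries the simple objects (stable parabolic sheaves) to the simple objects of $\mathcal{C}^\mathrm{ss}(\mu)$. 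Hence the filtration above is a genuine JH filtration of $M_\bullet$, and by uniqueness of the associated graded we obtain $\mathrm{gr}^{\mathrm{JH}}(M_\bullet) \cong \Psi(\mathrm{gr}(\psheaf{E}))$. Applying the quasi-inverse $\Psi^\vee$ from Proposition \ref{prop-psh-emb} yields $\mathrm{gr}(\psheaf{E}) \cong \Psi^\vee(\mathrm{gr}^{\mathrm{JH}}(M_\bullet))$.

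The main obstacle I expect is verifying that the quotient $\sheaf{E}_{2_\bullet}/\sheaf{E}_{1_\bullet}$, taken with its induced parabolic structure, really is $n$-regular with the expected numerical data and a strict filtration. The regularity is not automatic for an arbitrary subquotient: one must work within the bounded family of semistable parabolic sheaves associated to $\tau_p$ (Theorem \ref{boundedness}), where the uniform choice of $n$ from (C1) applies, and one must keep track of the weights so that the induced filtration on the quotient stays strict of the correct length. Once these regularity and strictness checks are in place, the rest is a diagram chase using the exactness of $\Phi$.
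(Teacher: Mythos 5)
Your proposal is correct and follows essentially the same route as the paper: the paper likewise deduces the quotient isomorphism by applying $\Psi$ to the short exact sequence $0\to \sheaf{E}_{1_\bullet}\to \sheaf{E}_{2_\bullet}\to \sheaf{E}_{2_\bullet}/\sheaf{E}_{1_\bullet}\to 0$ levelwise, using that a parabolic subsheaf of type $\tau_p$ is itself semistable and hence $n$-regular by (C1), so that $\Phi$ is exact there. The Jordan--H\"older statements are then obtained exactly as you describe (the paper delegates this step to the argument of \cite[Corollary 5.11]{AK07}, which is the inductive termwise application plus uniqueness of the associated graded that you spell out).
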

\begin{proof}
First note that if $\psheaf{E}'$ is a parabolic subsheaf of $\psheaf{E}$ having 
parabolic type $\tau_p$, then $\psheaf{E}'$ is also semistable, and hence 
$n$-regular by the condition (C1). Consider the short exact sequence
\begin{equation}\label{s4-eq-exact}
0\ra \sheaf{E}_{1_\bullet}\ra \sheaf{E}_{2_\bullet}\ra \sheaf{E}_{2_\bullet}/\sheaf{E}_{1_\bullet}\ra 0\,.
\end{equation}
For each $\alpha$, we have a short exact sequence
$$
0\ra \sheaf{E}_{1_\alpha}\ra \sheaf{E}_{2_\alpha}\ra 
\sheaf{E}_{2_\alpha}/\sheaf{E}_{1_\alpha}\ra 0\,.
$$
Since $\sheaf{E}_{1_\bullet}$ is $n$-regular, by applying the functor $\Psi$ to the 
short exact sequence \eqref{s4-eq-exact}, we get the isomorphism
$$
\Psi(\sheaf{E}_{2_\bullet})/\Psi(\sheaf{E}_{1_\bullet})\cong \Psi(\sheaf{E}_{2_\bullet}/\sheaf{E}_{1_\bullet})\,.
$$
The rest of the proof follows in the same line as in \cite[Corollary 5.11]{AK07}.
\end{proof}
\section{Moduli functors and construction}\label{sec-construction}
We are now ready to give a functorial construction of the following moduli problem.
Consider the moduli functor
$$
\mf(\tau_p)^\mathrm{ss}_X\colon (\Sch/\Bbbk)^\circ \ra \Set
$$
which assigns to a $\Bbbk$-scheme $S$ the set of isomorphism classes of flat families over $S$ of 
semistable parabolic sheaves on $X$ having parabolic type $\tau_p$. Similarly, there is a moduli functor
$\mf(\tau_p)^\mathrm{s}_X$ for stable sheaves.

Let us fix the dimension vector $\dimv{d}$ for the ladder quiver 
$$\KAl = ((\KAl)_0, (\KAl)_1)\,,$$
where $(\KAl)_0$ is the set of vertices and
$(\KAl)_1$ is the set of arrows as described in \eqref{relation_ideal}.  Let 
$$
\rs := \bigoplus_{a\in (\KAl)_1} 
\Hom[\Bbbk]{\Bbbk^{d_{s(a)}}}{\Bbbk^{d_{t(a)}}}
$$ 
be the space of representations of $\KAl$ having dimension vector $\dimv{d}$.
Let $\rsc$ be the closed subset of $\rs$ consisting of $(\alpha_a)$ satisfying the 
relation in $I$ (see, \eqref{relation_ideal}). Let $\rsf$ be an open subset of $\rsc$ consisting of 
$(\alpha_a)\in \rsc$ such that for any $a\in \mathbf A_{\ell 1} \times Q_0$, we have 
$\alpha_a$ injective map. Let
$$
G(\KAl):= \displaystyle \prod_{w\in (\KAl)_0} \mathrm{GL}(\Bbbk^{d_w})\,.
$$
There is a natural action of the group $G(\KAl)$
on $\rs$ such that the isomorphism classes in $\RepKAl$ correspond to the orbits in 
$\rs$ with respect to this action. 
Let $G:= G(\KAl)/\Delta$, where
$\Delta:= \{(t\mathbf{1}_{\Bbbk^{d_w}})_{w\in (\KAl)_0}\,|\, t\in \Bbbk^*\}$ 
and there is an action of this group $G$ on $\rs$.

Note that $\rsf$ is a locally closed subscheme of $\rs$, and is invariant 
under this action. Given a weight $\sigma\in \R^{(\KAl)_0}$, let 
$$
\rsf^{\sigma\text{-}\mathrm{ss}} = 
\{x\in \rsf \;|\; \mbox{the corresponding representation}~M_x~\mbox{is}~
\sigma\text{-}\mbox{semistable}\}
$$ 
be the subset of $\rsf$, which is $\sigma$-semistable locus in $\rsf$. 
Similarly, let $\rsf^{\sigma\text{-}\mathrm{s}}$ denote the $\sigma$-stable locus in $\rsf$.
Recall that if $\sigma$ is an integral weight, then $\rsf^{\sigma\text{-}\mathrm{ss}}$ and
$\rsf^{\sigma\text{-}\mathrm{s}}$ are open subset of $\rsf$. By \cite[Corollary 2.3]{Ch08}, 
it follows that they are open subsets of $\rsf$ for arbitrary weights. 

Now, for a fixed parabolic type $\tau_p$, we have a fixed parabolic weights 
$\alpha_*$. From this, we get a weight $\theta\in \R^{(\KAl)_0}$ as in \eqref{eq-theta-defn}
so that the functor $\Psi$ preserve the semistability (see, Section \ref{sec-ss-analysis}).
More precisely, for a vertex $w \in (\KAl)_0$, let
\[
\theta_w = \left\{\begin{array}{ll}
\varepsilon_j d_{12} & \mbox{if}~ w = j_1 \;, j = 1, 2, \cdots, \ell+1 \\
& \\
-\sum_{i=1}^{\ell+1}\varepsilon_i d_{i1} & \mbox{if}~ w = 1_2\\
& \\
0 & \mbox{otherwise},
\end{array}\right.
\]
where $\dv = (d_{11}, d_{12}, \dots, d_{(\ell+1) 1}, d_{(\ell+1) 2})$ is the dimension
vector of $\KAl$ determined by the fixed parabolic type $\tau_p$.

By \cite[Corollary 2.3]{Ch08}, there exist an integral weight $\sigma\in \Z^{(\KAl)_0}$
such that 
$$
\rsf^{\sigma\text{-}\mathrm{ss}} = \rsf^{\theta\text{-}\mathrm{ss}}~\mbox{and}~
\rsf^{\sigma\text{-}\mathrm{s}} = \rsf^{\theta\text{-}\mathrm{s}}\,.
$$

Let $\chi_\sigma \colon G\ra \Bbbk^*$ be the character determined by the 
integral weight $\sigma$. Then, we have
$$
\rsf^{\chi_\sigma\text{-}\mathrm{ss}} = \rsf^{\theta\text{-}\mathrm{ss}}~\mbox{and}~
\rsf^{\chi_\sigma\text{-}\mathrm{s}} = \rsf^{\theta\text{-}\mathrm{s}}\,.
$$
where $\rsf^{\chi_\sigma\text{-}\mathrm{ss}}$ denote the open subset of $\rsf$
which is $\chi_\sigma$-semistable locus in $\rsf$.

For simplicity, we shall denote $\rsf^{\theta\text{-}\mathrm{ss}}$ and 
$\rsf^{\theta\text{-}\mathrm{s}}$ simply by $\rsf^\mathrm{ss}$ and $\rsf^\mathrm{s}$,
respectively.

Recall \cite[Section 4.2]{AD20}, the moduli functor
$$
\mf^\mathrm{ss}_\mathrm{fil}(\dimv{d}) \colon (\Sch/\mathbb K)^\circ \ra \Set
$$
which assigns to a $\Bbbk$-scheme $S$ the set of isomorphism classes of flat families over $S$ of 
semistable filtered Kronecker modules of given dimension vector $\dimv{d}$.

\begin{remark}\label{remark:well-definedness}\rm{
Let $\mathbb{M}_\bullet$ be a tautological family of filtered Kronecker modules on $\rsf$. It will be a 
trivial vector bundle for each vertex of the ladder quiver, as it is a restriction of the tautological family 
over the affine space $\rs$. We can use this fact to see that if two families are related by an action of 
group element over a base scheme $S$, then they are isomorphic as families. 
}
\end{remark}

There is a natural functor $h\colon \fp{\rsf^\mathrm{ss}}\ra 
\mf^\mathrm{ss}_\mathrm{fil}(\dimv{d})$, which induces a local isomorphism 
$\tilde{h}\colon \fp{\rsf^\mathrm{ss}}/\fp{G}\ra \mf^\mathrm{ss}_\mathrm{fil}(\dimv{d})$ 
\cite[Proposition 4.11]{AD20}. 

\begin{theorem}
There exist moduli spaces $\msf(\dimv{d})$ (respectively, $\mstf(\dimv{d}))$
of $\theta$-semistable (respectively, $\theta$-stable) filtered Kronecker modules
having dimension vector $\dimv{d}$, where $\msf(\dimv{d})$ is a quasi-projective scheme.
Further, the closed points of the moduli space $\msf(\dimv{d})$ (respectively, $\mstf(\dimv{d})$) 
correspond to the $S$-equivalence classes of $\theta$-semistable 
(respectively, $\theta$-stable) objects of $\FKM$ which have dimension vector $\dimv{d}$. 
\end{theorem}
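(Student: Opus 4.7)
The plan is to realize both moduli spaces as GIT quotients of the representation scheme $\rsf$ of the ladder quiver $\KAl$ with relations, using the integral weight $\sigma$ constructed just above the theorem as our GIT parameter. The trivial line bundle on $\rs$ restricted to $\rsf$ carries a natural $G$-linearization twisted by the character $\chi_\sigma$ associated to $\sigma$, and we take $\msf(\dv)$ to be the projective-over-affine GIT quotient $\rsf^{\chi_\sigma\text{-ss}}\git G$ together with its open subscheme $\mstf(\dv)$ parametrising properly stable orbits. The key identifications already in hand are $\rsf^{\chi_\sigma\text{-ss}}=\rsf^{\theta\text{-ss}}$ and $\rsf^{\chi_\sigma\text{-s}}=\rsf^{\theta\text{-s}}$, so the GIT-semistable locus exactly matches the $\theta$-semistable locus that Proposition \ref{ss-prop-1} interprets as slope semistability.

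First I would invoke our earlier construction in \cite{AD20}, which carries out precisely this GIT argument for filtered representations of a quiver with relations (adapting A.~King's original construction to the quasi-abelian setting via results of Y.~Andr\'e). That theorem, applied to the ladder quiver $\KAl$ with relation ideal $I$ and dimension vector $\dv$, produces $\msf(\dv)$ as a quasi-projective scheme together with the open stable locus $\mstf(\dv)$, and shows that the natural functor $h$ from $\fp{\rsf^\mathrm{ss}}$ factors through $\msf(\dv)$ via the local isomorphism $\tilde{h}$ recalled before the theorem. Given the weight translation $\theta\leftrightarrow \chi_\sigma$ supplied by \cite[Corollary~2.3]{Ch08}, the output of \cite{AD20} applied to the weight $\sigma$ is exactly the desired moduli space.

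For the identification of closed points I would argue as in the classical case: closed $G$-orbits inside $\rsf^{\chi_\sigma\text{-ss}}$ correspond to $\theta$-polystable filtered Kronecker modules, i.e.\ direct sums of $\theta$-stable objects of the same slope. By Proposition \ref{ss-prop-2} transported to the filtered side, the full subcategory of $\theta$-semistable filtered Kronecker modules with fixed slope $\mu$ is abelian and Artinian, so every $\theta$-semistable $M_\bullet$ of dimension vector $\dv$ has a Jordan-H\"older filtration whose associated graded $\mathrm{gr}^{\mathrm{JH}}(M_\bullet)$ is independent of the choice of filtration; two $\theta$-semistable modules lie in the same closed $G$-orbit in $\overline{GM_\bullet}$ exactly when their $\mathrm{gr}^{\mathrm{JH}}$'s are isomorphic, i.e.\ when they are $S^{\mathrm{JH}}$-equivalent, giving the claimed bijection. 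The stable case is immediate since $\theta$-stable objects are simple in $\mathcal{C}^\mathrm{ss}(\mu)$ and hence have closed orbits with trivial stabiliser modulo the diagonal $\Delta$.

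The main obstacle is that $\rsf$ is only a locally closed subscheme of the affine representation space $\rs$: the commutativity relations in $I$ cut out a closed subscheme $\rsc$, but requiring the filtration maps to be injective is an open condition. One therefore cannot quote King's theorem verbatim. This is precisely what is handled in \cite{AD20}, where the GIT quotient is formed on $\rsc$ and then $\msf(\dv)$ is identified with the image of $\rsf^{\chi_\sigma\text{-ss}}$ under the quotient map, using that $G$ preserves both the closed relation locus and the open filtered locus; the remaining work of the present proof is just translating our weight $\theta$ into the language of \cite{AD20} and citing that construction.
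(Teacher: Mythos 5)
Your proposal follows the same route as the paper: the paper's proof is simply that, in view of Proposition \ref{ss-prop-1} (which recasts $\theta$-semistability as slope semistability), the theorem is a special case of the GIT construction of moduli of filtered quiver representations in \cite[Theorem 4.12]{AD20}. Your additional discussion of the weight translation via \cite[Corollary 2.3]{Ch08}, the locally closed nature of $\rsf$, and the Jordan--H\"older description of closed points correctly fills in what that citation carries, so the argument is sound and essentially identical in strategy.
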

\begin{proof}
In view of Proposition \ref{ss-prop-1}, this is a special case of \cite[Theorem 4.12]{AD20}.
\end{proof}

Let $Q = (\rsf)^{\mathrm{reg}}_{\tau_p}$ be the locally closed subscheme of $\rsf$ corresponding 
to the family $\mathbb{M}_\bullet$ as in the Proposition \ref{s2-prop-stratification}.

Let $\mf_1, \mf_2 \colon \Sch^\mathrm{op}\ra \Set$ be two functors. Recall that a morphism of 
functors $g\colon \mf_1 \ra \mf_2$ is said to be a local isomorphism, if it induces 
an isomorphism of sheafification in the Zariski topology.

\begin{lemma}
For a fixed parabolic type $\tau_p$, the moduli functor $\mf(\tau_p)^\mathrm{ss}_X$ is locally isomorphic 
to the quotient functor $\fp{Q^{\mathrm{[ss]}}}/\fp{G}$.
\end{lemma}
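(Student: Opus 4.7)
The plan is to build mutually inverse morphisms between $\mf(\tau_p)^\mathrm{ss}_X$ and the Zariski sheafification of $\fp{Q^{\mathrm{[ss]}}}/\fp{G}$, using the functor $\Psi$ from Section \ref{sec-embedding}, the stratification of Proposition \ref{s2-prop-stratification}, and the already-established local isomorphism $\tilde h$ for the filtered Kronecker side.

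First I would construct the forward direction. Let $S$ be a $\Bbbk$-scheme and $\mathcal{E}_\bullet$ a flat family over $S$ of semistable parabolic sheaves of parabolic type $\tau_p$. By condition (C1) each fibre is $n$-regular, so $\mathcal{E}_\bullet$ is an object of $\FPSh{X_S}^{\mathrm{n\text{-}reg}}$; by Proposition \ref{s2-prop-eqiv-familiy}, $\Psi(\mathcal{E}_\bullet)$ is a flat family of parabolic filtered Kronecker modules over $S$ lying in $\mathcal{C}_S$; by Theorem \ref{ss-thm-1} every fibre is $\theta$-semistable, so $\Psi(\mathcal{E}_\bullet)$ defines an element of $\mf^\mathrm{ss}_\mathrm{fil}(\dimv{d})(S)$. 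Via the local isomorphism $\tilde h\colon \fp{\rsf^\mathrm{ss}}/\fp{G}\ra \mf^\mathrm{ss}_\mathrm{fil}(\dimv{d})$, the family $\Psi(\mathcal{E}_\bullet)$ is Zariski-locally on $S$ represented by a morphism to $\rsf^\mathrm{ss}$, well-defined up to $G$-action. Because $\Psi(\mathcal{E}_\bullet)\cong \Psi\circ \Psi^\vee(\Psi(\mathcal{E}_\bullet))$ with $\Psi^\vee(\Psi(\mathcal{E}_\bullet))\cong \mathcal{E}_\bullet$ an $n$-regular family of parabolic type $\tau_p$, Proposition \ref{s2-prop-stratification}(2) forces this local morphism to factor through $Q=(\rsf)^{\mathrm{reg}}_{\tau_p}$, and in fact through $Q^{\mathrm{[ss]}}:=Q\cap \rsf^\mathrm{ss}$. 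This gives a morphism of functors $\mf(\tau_p)^\mathrm{ss}_X\ra \fp{Q^{\mathrm{[ss]}}}/\fp{G}$ after sheafification.

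For the reverse direction, given a morphism $\sigma\colon S\ra Q^{\mathrm{[ss]}}$, I pull back the tautological family $\mathbb{M}_\bullet$ on $\rsf$ to obtain a flat family $\sigma^*\mathbb{M}_\bullet$ of $\theta$-semistable filtered Kronecker modules over $S$. Proposition \ref{s2-prop-stratification}(1), applied to the stratum $Q=B^{\mathrm{reg}}_{\tau_p}$, gives that $\Psi^\vee(\sigma^*\mathbb{M}_\bullet)$ is a flat family of $n$-regular parabolic sheaves of parabolic type $\tau_p$, and Theorem \ref{ss-thm-1} ensures each fibre is parabolic semistable. By Remark \ref{remark:well-definedness} the tautological family is a trivial vector bundle at every vertex of $\KAl$, so two morphisms $\sigma,\sigma'\colon S\ra Q^{\mathrm{[ss]}}$ in the same $\fp{G}(S)$-orbit produce isomorphic families; consequently this construction descends to a morphism $\fp{Q^{\mathrm{[ss]}}}/\fp{G}\ra \mf(\tau_p)^\mathrm{ss}_X$.

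Finally, I would verify that these two morphisms are mutually inverse after Zariski sheafification. Starting from $\mathcal{E}_\bullet$, the composition recovers $\Psi^\vee(\Psi(\mathcal{E}_\bullet))\cong \mathcal{E}_\bullet$ by Proposition \ref{s2-prop-eqiv-familiy}; starting from $\sigma$, the composition recovers $\Psi(\Psi^\vee(\sigma^*\mathbb{M}_\bullet))\cong \sigma^*\mathbb{M}_\bullet$ by Proposition \ref{s2-prop-stratification}(1), which after the local isomorphism $\tilde h$ and the $\fp{G}$-quotient represents the original class of $\sigma$. The main obstacle is a careful bookkeeping of the local/global passage: one must check that the Zariski sheafification of $\fp{Q^{\mathrm{[ss]}}}/\fp{G}$ correctly absorbs the choice of trivialization of $\Psi(\mathcal{E}_\bullet)$ as a family over the representation space, which is precisely what the local isomorphism statement for $\tilde h$ guarantees in the Kronecker-module setting and what is transported here by the functorial embedding $\Psi$.
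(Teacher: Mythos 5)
Your argument is correct and uses exactly the same ingredients as the paper's proof: the embedding of flat families (Proposition \ref{s2-prop-eqiv-familiy}), the stratification of Proposition \ref{s2-prop-stratification}, the preservation of semistability (Theorem \ref{ss-thm-1}), Remark \ref{remark:well-definedness} for descent to the $\fp{G}$-quotient, and the local isomorphism $\tilde h$. The paper packages the same content more compactly by showing that the square relating $\fp{Q^{\mathrm{[ss]}}}/\fp{G}$ and $\fp{\rsf^{\mathrm{ss}}}/\fp{G}$ over $\mf(\tau_p)^\mathrm{ss}_X$ and $\mff(\dv)$ is Cartesian (the fibre-product identification being precisely Proposition \ref{s2-prop-stratification}(2)) and then transporting the local isomorphism $\tilde h$ along the base change, which is your explicit mutual-inverse construction in disguise.
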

\begin{proof}
Let $Q^{\mathrm{[ss]}}\subset Q$ be an open loci, where the fibres of the tautological flat family 
$\mathbb{F}_\bullet = \Psi^\vee(\iota^* \mathbb{M}_\bullet)$ over $Q$ are semistable parabolic sheaves.
Then, we get a natural transformation
$$
g^\mathrm{ss}\colon \fp{Q^{\mathrm{[ss]}}} \ra \mf(\tau_p)^\mathrm{ss}_X
$$
which is restriction of the natural transformation 
$g\colon \fp{Q} \ra \mf(\tau_p)^\mathrm{reg}_X$ defined by
$$
g^\mathrm{ss}(\sigma\colon S\ra Q^{\mathrm{[ss]}})\mapsto [\Psi^\vee(\sigma^* \iota^* \mathbb{M}_\bullet)]
$$
If we change the family $\sigma^* \iota^* \mathbb{M}_\bullet$ by the action of the group $G(S)$, then as 
we noticed in Remark \ref{remark:well-definedness}, the tautological family comes with 
the trivial vector bundle over each vertex of the ladder quiver, and hence gives the isomorphic family. 
Since an isomorphism of families is preserved after applying the functor $\Psi^\vee$, we get the well defined 
natural transformation $\tilde{g}^\mathrm{ss} \colon \fp{Q^{\mathrm{[ss]}}}/\fp{G} \ra \mf(\tau_p)^\mathrm{reg}_X$.
By Proposition \ref{s2-prop-stratification}(1), we have the following commutative diagram of natural 
transformations
\[
\xymatrix{
\fp{Q^{\mathrm{[ss]}}} \ar[r]^\iota \ar[d]_{g^\mathrm{ss}} & \fp{\rsf^{\mathrm{ss}}} 
\ar[d]^{h} \\
\mf(\tau_p)^\mathrm{ss}_X \ar[r]_j & \mff(\dv) 
}
\]
For any $\Bbbk$-scheme $S$, the map $\fp{Q^{\mathrm{[ss]}}} \ra 
\mf(\tau_p)^\mathrm{ss}_X\times_{\mff(\dv)} \fp{\rsf^{\mathrm{ss}}}$
defined by 
$$
(\sigma\colon S\ra Q^{\mathrm{[ss]}})\mapsto (g_S(\sigma), \iota\circ \sigma)
$$
is a bijection, by Proposition \ref{s2-prop-stratification}(2). In other words, we have the
following Cartesian diagram
\[
\xymatrix{
\fp{Q^{\mathrm{[ss]}}}/\fp{G} \ar[r] \ar[d]_{\tilde{g}^\mathrm{ss}} & 
\fp{\rsf^{\mathrm{ss}}}/\fp{G} \ar[d]^{\tilde{h}} \\
\mf(\tau_p)^\mathrm{ss}_X \ar[r]_j & \mff(\dv) 
}
\]
Since $\tilde{h}$ is a local isomorphism, it follows that 
$\tilde{g}^\mathrm{ss}\colon \fp{Q^{\mathrm{[ss]}}}/\fp{G} \ra \mf(\tau_p)^\mathrm{ss}_X$
is a local isomorphism.
\end{proof}

\begin{theorem}\label{main_thm:FMC}
There exists a quasi-projective scheme $\msp_X^\mathrm{ss}(\tau_p)$ which 
corepresents the moduli functor
$\mf(\tau_p)^\mathrm{ss}_X$. The closed points of $\msp_X^\mathrm{ss}(\tau_p)$ correspond to 
the $S$-equivalence classes of semistable parabolic sheaves having parabolic type $\tau_p$.
\end{theorem}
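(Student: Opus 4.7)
The plan is to construct $\msp_X^{\mathrm{ss}}(\tau_p)$ as a GIT quotient of $Q^{[\mathrm{ss}]}$ by $G$ with respect to the linearization determined by the character $\chi_\sigma$ introduced above, and then transfer corepresentability from the filtered Kronecker module side to the parabolic sheaf side using the local isomorphism $\tilde{g}^{\mathrm{ss}}\colon \fp{Q^{[\mathrm{ss}]}}/\fp{G}\to \mf(\tau_p)^{\mathrm{ss}}_X$ established in the preceding lemma.

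First, I would observe that $Q^{[\mathrm{ss}]}$ sits naturally as a locally closed subscheme of $\rsf^{\mathrm{ss}}$: by Theorem \ref{ss-thm-1}, the locus where the family $\Psi^\vee(\iota^*\mathbb{M}_\bullet)$ is fibrewise a semistable parabolic sheaf of type $\tau_p$ corresponds exactly to the fibres of $\mathbb{M}_\bullet$ that are $\theta$-semistable filtered Kronecker modules, so $Q^{[\mathrm{ss}]} = Q \cap \rsf^{\mathrm{ss}}$. Since $Q$ is $G$-invariant (being cut out by isomorphism class conditions) and $\rsf^{\mathrm{ss}}$ is $G$-invariant, so is $Q^{[\mathrm{ss}]}$. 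The GIT quotient $\msp_X^{\mathrm{ss}}(\tau_p):=Q^{[\mathrm{ss}]}\git G$ with respect to $\chi_\sigma$ is then a quasi-projective scheme, as all points of $Q^{[\mathrm{ss}]}$ are $\chi_\sigma$-semistable by construction.

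Next, I would argue corepresentability. The GIT quotient corepresents the sheafification of the quotient functor $\fp{Q^{[\mathrm{ss}]}}/\fp{G}$ in the Zariski topology, by standard properties of good quotients (and the fact that the linearized tautological family descends to $\msp_X^{\mathrm{ss}}(\tau_p)$ up to isomorphism, in view of Remark \ref{remark:well-definedness}). Since $\tilde{g}^{\mathrm{ss}}$ is a local isomorphism by the preceding lemma, it induces an isomorphism on Zariski sheafifications, and so $\msp_X^{\mathrm{ss}}(\tau_p)$ corepresents $\mf(\tau_p)^{\mathrm{ss}}_X$ as well.

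Finally, the identification of closed points: closed points of the GIT quotient $Q^{[\mathrm{ss}]}\git G$ are in bijection with closed $G$-orbits in $Q^{[\mathrm{ss}]}$, which in turn correspond to $S^{\mathrm{JH}}$-equivalence classes of $\theta$-polystable filtered Kronecker modules lying in $Q^{[\mathrm{ss}]}$. Using Proposition \ref{ss-prop-3}, Jordan--H\"older filtrations and associated gradeds are interchanged by $\Psi$ and $\Psi^\vee$, so two semistable parabolic sheaves of type $\tau_p$ are $S$-equivalent if and only if the corresponding filtered Kronecker modules are $S^{\mathrm{JH}}$-equivalent. This yields the desired bijection between closed points of $\msp_X^{\mathrm{ss}}(\tau_p)$ and $S$-equivalence classes of semistable parabolic sheaves of type $\tau_p$. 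The main obstacle here is the careful checking that the associated graded of a parabolic Jordan--H\"older filtration is again of type $\tau_p$ (so that $\Psi$ applies and $(\mathrm{C1})$ gives $n$-regularity on every step), and that the local isomorphism of moduli functors genuinely upgrades to corepresentability rather than merely identifying closed points.
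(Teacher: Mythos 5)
Your overall strategy coincides with the paper's: realize $\msp_X^\mathrm{ss}(\tau_p)$ as a good quotient of $Q^{[\mathrm{ss}]}$ by $G$, transfer corepresentability through the local isomorphism $\tilde{g}^\mathrm{ss}$, and identify closed points via closed orbits together with Proposition \ref{ss-prop-3}. Your second and third steps are essentially the paper's argument.

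The genuine gap is in the first step. You assert that $Q^{[\mathrm{ss}]}\git G$ exists as a quasi-projective scheme ``as all points of $Q^{[\mathrm{ss}]}$ are $\chi_\sigma$-semistable by construction.'' That is not a valid deduction: containment in the GIT-semistable locus guarantees a good quotient of $\rsf^{\mathrm{ss}}$ itself, not of an arbitrary $G$-invariant locally closed subscheme of it. For the restriction of $\pi$ to $Q^{[\mathrm{ss}]}$ to be a good quotient onto a locally closed (hence quasi-projective) subscheme of $\msf(\dv)$, one must verify a saturation condition: for every point of $Q^{[\mathrm{ss}]}$, the unique closed orbit contained in the closure of its orbit inside $\overline{Q^{[\mathrm{ss}]}}\cap\rsf^{\mathrm{ss}}$ must again lie in $Q^{[\mathrm{ss}]}$. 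This is exactly where the paper deploys Proposition \ref{ss-prop-3}: for $M_\bullet=\Psi(\psheaf{E})$ that closed orbit corresponds to $\mathrm{gr}^{\mathrm{JH}}(M_\bullet)\cong\Psi(\mathrm{gr}(\psheaf{E}))$, and since $\mathrm{gr}(\psheaf{E})$ is again a semistable parabolic sheaf of type $\tau_p$ (hence $n$-regular by (C1)), the orbit stays in $Q^{[\mathrm{ss}]}$; only then can one run the argument of \cite[Proposition 6.3]{AK07} to get the good quotient. You do invoke precisely this compatibility of Jordan--H\"older graded objects later, for the closed-point bijection, and you even flag the type-$\tau_p$ issue as the main obstacle --- but the place where it is indispensable is earlier, in establishing that the quotient exists at all. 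Moving that observation to the front closes the gap and makes your proof essentially identical to the paper's.
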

\begin{proof}
First we show that the good quotient 
$\pi_X \colon Q^{\mathrm{[ss]}}\ra Q^{\mathrm{[ss]}}\sslash G=:\msp_X^\mathrm{ss}(\tau_p)$
exists. Set $Z := \overline{Q^{\mathrm{[ss]}}}\cap \rsf^\mathrm{ss}$. Let $O(M_\bullet)$ be
an orbit of a point in $Q^{\mathrm{[ss]}}$ corresponding to $M_\bullet = \Psi(\psheaf{E})$, 
where $\psheaf{E}$ is a semistable parabolic sheaf on $X$ having parabolic type $\tau_p$.
The closed orbit in $Z$ contained in the closure of the orbit $O(M_\bullet)$ in $Z$ is the
orbit corresponding to the associated graded object $\mathrm{gr}^\mathrm{JH}(M_\bullet)$ 
(see, \cite[Proposition 4.9]{AD20}). By Proposition \ref{ss-prop-3}, 
$\mathrm{gr}^{\mathrm{JH}}(M_\bullet) \cong \Psi(\mathrm{gr}(\psheaf{E}))\,,$ and
$\mathrm{gr}(\psheaf{E})$ is semistable. Hence, this closed orbit is also in $Q^{\mathrm{[ss]}}$.
Now following the arguments as in the proof of \cite[Proposition 6.3]{AK07}, we conclude that
the good quotient 
$\pi_X \colon Q^{\mathrm{[ss]}}\ra \msp_X^\mathrm{ss}(\tau_p)$ exists.

Since $\tilde{g}^\mathrm{ss}\colon \fp{Q^{\mathrm{[ss]}}}/\fp{G} \ra \mf(\tau_p)^\mathrm{ss}_X$
is a local isomorphism, it follows that $\msp_X^\mathrm{ss}(\tau_p)$ corepresents the moduli functor
$\mf(\tau_p)^\mathrm{ss}_X$.

Moreover, we have the following commutative diagram
\[
\xymatrix{
Q^{\mathrm{[ss]}} \ar[r]^\iota \ar[d]_{\pi_X} & \rsf^\mathrm{ss} \ar[d]^\pi \\
\msp_X^\mathrm{ss}(\tau_p) \ar[r]_\varphi & \msf(\dv)
}
\]
where the morphism $\varphi \colon \msp_X^\mathrm{ss}(\tau_p) \ra \msf(\dv)$ is a set-theoretic
injection of the closed points \cite[Proposition 6.3]{AK07}. In other words, it induces a bijection
$$
\varphi\colon \msp_X^\mathrm{ss}(\tau_p)(\Bbbk) \ra \pi(Q^{\mathrm{[ss]}})(\Bbbk)\,.
$$
Hence, the closed points of $\msp_X^\mathrm{ss}$ correspond to the $S$-equivalence classes of
semistable filtered Kronecker modules that are of the form $\Psi(\psheaf{E})$ for semistable parabolic 
sheaves $\psheaf{E}$ having parabolic type $\tau_p$. By Proposition \ref{ss-prop-3}, the assertion for
closed points follows.
\end{proof}

\begin{remark}\rm{
The image of the map 
$\varphi\colon \msp_X^\mathrm{ss}(\tau_p)(\Bbbk) \ra \msf(\dv)(\Bbbk)$ is contained in the set 
of all closed points of $\msf(\dv)$ which correspond to the $S^{JH}$-equivalence classes of filtered 
Kronecker modules (see the remark after \cite[Theorem 4.12]{AD20}). We have not considered the case of 
parabolic weights with $\alpha_1 = 0$, as we do not know the validity of \cite[Proposition 4.4.3]{Sc11}. 
But in these cases, it is possible to use the more general notion of $S$-equivalence as in \cite{AD20} to get 
the description of two types of points of $\msp_X^\mathrm{ss}(\tau_p)$. 
}
\end{remark}

The projectivity of moduli space $\msp_X^\mathrm{ss}(\tau_p)$ follows
esentially from \cite[\S 5]{Yo93} using Langton's criterion. 
In the following, we provide some essential steps with appropriate references.

Let $R$ be a discrete valuation ring over $\Bbbk$ with 
quotient filed $L$. Let $i\colon X_\Bbbk\ra X_R$ be a closed immersion, 
and $j\colon X_L\ra X_R$ an open immersion.

\begin{proposition}\rm{\cite[Theorem 5.7]{Yo93}}\label{s5-prop-extn}
Let $\psheaf{E}$ be a flat family of parabolic sheaves having parabolic 
type $\tau_p$ on $X$ over $\mathrm{Spec}(R)$ such that $j^*\psheaf{E}$ is a 
flat family of semistable parabolic sheaves on $X$ over $\mathrm{Spec}(L)$. 
Then, there exists an $\struct{X_R}$-submodule $\sheaf{E}'$ of $\sheaf{E}$
such that $j^*\sheaf{E}' = j^*\sheaf{E}$ and $\psheaf{E}'$ is a flat family
of parabolic sheaves on $X$ over $\mathrm{Spec}(R)$ and $i^*\psheaf{E}'$ is
semistable; where $\sheaf{E}'_\alpha = \sheaf{E}'\cap \sheaf{E}_\alpha$ for
all $\alpha$.
\end{proposition}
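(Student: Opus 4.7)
The plan is to adapt S.~G. Langton's classical argument, as carried out in \cite{Yo93} for the parabolic setting, in the form of a proof by contradiction. Assume that no such modification $\sheaf{E}'$ exists. I would then build an infinite descending chain of $\struct{X_R}$-submodules
\[
\sheaf{E} = \sheaf{E}^{(0)} \supset \sheaf{E}^{(1)} \supset \sheaf{E}^{(2)} \supset \cdots
\]
such that each $\sheaf{E}^{(k)}$ is $R$-flat with $j^*\sheaf{E}^{(k)} = j^*\sheaf{E}$, and equipped with the induced parabolic filtration $\sheaf{E}^{(k)}_\alpha := \sheaf{E}^{(k)} \cap \sheaf{E}_\alpha$. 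The contradiction will come from showing that this chain must stabilize, which forces the last modification to have a semistable special fiber.

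The iteration step works as follows. Given $\psheaf{E}^{(k)}$ whose special fiber is not parabolic semistable, I would pick a maximal parabolic destabilizing subsheaf $\psheaf{B}_k \subset i^*\psheaf{E}^{(k)}$ (with its induced parabolic structure) so that $\rphilb{\psheaf{B}_k} > \rphilb{i^*\psheaf{E}^{(k)}}$ and both $\rphilb{\psheaf{B}_k}$ and $\pH{\psheaf{B}_k}$ are maximal among such choices. Then set
\[
\sheaf{E}^{(k+1)} := \ker\!\bigl(\sheaf{E}^{(k)} \twoheadrightarrow i_*(i^*\sheaf{E}^{(k)}/\sheaf{B}_k)\bigr),
\]
which is $R$-flat with the same generic fiber, and whose special fiber fits into a short exact sequence relating $\psheaf{B}_k$ and the quotient $i^*\psheaf{E}^{(k)}/\psheaf{B}_k$. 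By the contradiction hypothesis, $i^*\psheaf{E}^{(k+1)}$ is again unstable, producing $\psheaf{B}_{k+1}$ and allowing the iteration to continue indefinitely.

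The crux is to analyze the sequence of reduced parabolic Hilbert polynomials $\rphilb{\psheaf{B}_k}$. A computation on the exact sequences that interlock $\psheaf{B}_{k+1}$ and $\psheaf{B}_k$ on the special fiber (the parabolic analogue of Langton's key lemma, carried out in \cite[\S 5]{Yo93}) shows that this sequence is non-increasing in the lexicographic order on $\mathbb{Q}[m]$, and always bounded below by $\rphilb{i^*\psheaf{E}}$. Hence it stabilizes, and then $\pH{\psheaf{B}_k}$ stabilizes as well. Combined with the boundedness Theorem \ref{boundedness} applied to the family of parabolic subsheaves with prescribed reduced parabolic Hilbert polynomial, one deduces that the set of isomorphism classes of the $\psheaf{B}_k$ (and hence of the $\sheaf{E}^{(k)}$) lies in a bounded family.

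The main obstacle is extracting the final contradiction from this boundedness. The argument is to pass to the intersection $\sheaf{E}^{(\infty)} := \bigcap_k \sheaf{E}^{(k)}$ inside the bounded family: it is still $R$-flat, has the same generic fiber, and its special fiber inherits a parabolic filtration whose analysis forces the stabilized destabilizing quotient to vanish, contradicting strict inequality of the reduced parabolic Hilbert polynomials. This is precisely the content of \cite[Theorem 5.7]{Yo93}, to which we refer for the detailed numerical computations specific to the parabolic category.
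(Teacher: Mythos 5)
The paper does not actually prove this proposition: it is imported verbatim as \cite[Theorem 5.7]{Yo93}, and the surrounding text only assembles references for the projectivity argument. So your decision to sketch Langton's descending-chain strategy and then defer to Yokogawa for the parabolic bookkeeping matches the paper's treatment in spirit, and the skeleton you describe is the right one: elementary modifications $\sheaf{E}^{(k+1)} = \ker\bigl(\sheaf{E}^{(k)} \twoheadrightarrow i_*(i^*\sheaf{E}^{(k)}/\sheaf{B}_k)\bigr)$ carrying the induced filtrations $\sheaf{E}^{(k)}_\alpha = \sheaf{E}^{(k)}\cap\sheaf{E}_\alpha$, monotonicity of $\rphilb{\psheaf{B}_k}$, and stabilization via boundedness.

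The final step of your sketch, however, is wrong as stated, and the error is substantive rather than cosmetic. The intersection $\sheaf{E}^{(\infty)} = \bigcap_k \sheaf{E}^{(k)}$ does \emph{not} have the same generic fibre as $\sheaf{E}$: localization does not commute with infinite intersections, and since $\sheaf{E}^{(k)}/\sheaf{E}^{(k+1)}$ is a fixed nonzero sheaf $Q$ for all large $k$, the chain genuinely loses rank in the limit. (Already for $\sheaf{E}=R^{2}$ over a discrete valuation ring with uniformizer $\pi$ and $\sheaf{B}=\Bbbk\times 0$, one gets $\sheaf{E}^{(k)} = R\times \pi^{k}R$ and $\bigcap_k \sheaf{E}^{(k)} = R\times 0$.) This is not a fixable slip, because the whole point of Langton's endgame is the opposite of what you assert: after stabilization, $\bigcap_k\sheaf{E}^{(k)}$ is an $R$-flat module whose special fibre is essentially the stabilized destabilizer $\sheaf{B}$ and whose generic fibre is therefore a \emph{proper} parabolic subsheaf of $j^*\psheaf{E}$ with reduced parabolic Hilbert polynomial equal to $\rphilb{\psheaf{B}}$, which exceeds $\rphilb{j^*\psheaf{E}}$; the contradiction is with the hypothesis that $j^*\psheaf{E}$ is semistable. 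Tellingly, your argument never invokes that hypothesis, which is a reliable sign that the contradiction you describe cannot be reached as written --- without generic semistability the statement is simply false, since semistability is an open condition in flat families and would otherwise propagate from the modified special fibre back to the generic fibre. You should replace the claim ``same generic fibre, quotient forced to vanish'' with the correct limit analysis (or, as the paper does, lean entirely on \cite[Theorem 5.7]{Yo93}, which also supplies the parabolic-specific checks that each $\sheaf{E}^{(k+1)}_\alpha$ and its quotients remain $R$-flat).
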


\begin{proposition}
Let $\tau_p$ be a fixed parabolic type with $\deg(P) := \dim X$.
The moduli space $\msp_X^\mathrm{ss}(\tau_p)$ is projective over $\mathrm{Spec}(\Bbbk)$.
\end{proposition}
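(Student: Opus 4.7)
The plan is to establish projectivity by combining the quasi-projectivity already obtained in Theorem \ref{main_thm:FMC} with the valuative criterion of properness, using Yokogawa's extension result (Proposition \ref{s5-prop-extn}) as the key input.

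First I would reduce to a statement about lifting flat families. Let $R$ be a discrete valuation ring over $\Bbbk$ with quotient field $L$ and residue field $\Bbbk$, and let $\mathrm{Spec}(L)\to \msp_X^\mathrm{ss}(\tau_p)$ be any $L$-valued point. Since $\msp_X^\mathrm{ss}(\tau_p)$ only corepresents $\mf(\tau_p)^\mathrm{ss}_X$, after possibly replacing $R$ by a finite extension (which is harmless for the valuative criterion for quasi-projective schemes) this point lifts to a flat family $\psheaf{E}_L$ over $\mathrm{Spec}(L)$ of semistable parabolic sheaves on $X$ of parabolic type $\tau_p$. Using the fact that $X_R$ is regular along the generic fibre and applying standard arguments on coherent extensions, one extends $\psheaf{E}_L$ (together with its filtration) to a flat family $\psheaf{E}$ of parabolic sheaves of the given type over $\mathrm{Spec}(R)$; the hypothesis $\deg(P) = \dim X$ is what allows the extension to remain a torsion-free sheaf of the same Hilbert polynomial after removing $R$-torsion.

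Next, I would apply Proposition \ref{s5-prop-extn}. Starting from the extension $\psheaf{E}$ whose generic fibre is semistable, Yokogawa's result produces a modification $\psheaf{E}'\subset \psheaf{E}$ that agrees with $\psheaf{E}$ over $\mathrm{Spec}(L)$ and whose special fibre $i^*\psheaf{E}'$ is again a semistable parabolic sheaf of type $\tau_p$. Thus $\psheaf{E}'$ is a flat family of semistable parabolic sheaves on $X$ over $\mathrm{Spec}(R)$ of type $\tau_p$, and by the moduli property it determines a morphism $\mathrm{Spec}(R)\to \msp_X^\mathrm{ss}(\tau_p)$ extending the given $L$-point. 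This verifies the valuative criterion, so $\msp_X^\mathrm{ss}(\tau_p)$ is proper and hence projective.

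The main obstacle will be the separatedness part of the valuative criterion, i.e.\ checking that two different $R$-extensions of the same $L$-family land at the same closed point of $\msp_X^\mathrm{ss}(\tau_p)$. The argument runs as in \cite{AK07}: via $\Psi$ one compares the two extensions inside a flat family of filtered Kronecker modules over $R$; the closed orbit inside $\rsf^\mathrm{ss}$ lying in the closure of each extension corresponds to the associated graded object of the special fibre, and by Proposition \ref{ss-prop-3} this associated graded only depends on the $S$-equivalence class of the parabolic sheaf. Hence both extensions determine the same point in the GIT quotient $\msp_X^\mathrm{ss}(\tau_p)$. The condition $\deg(P)=\dim X$ is essential throughout: it guarantees that saturations and quotients appearing in the Langton-style induction remain pure of dimension $d=\dim X$, so that the extension procedure stays inside the category of parabolic sheaves of type $\tau_p$.
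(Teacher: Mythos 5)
Your proposal follows essentially the same route as the paper's proof: quasi-projectivity from Theorem \ref{main_thm:FMC} plus the valuative criterion of properness, lifting the $L$-valued point to the parameter scheme $Q^{[\mathrm{ss}]}$ after passing to a dominating DVR, extending the coherent sheaf together with its filtration over the closed fibre (the paper does this via Langton's extension result, purification of the special fibre, and properness of the Quot scheme), and then invoking Proposition \ref{s5-prop-extn} to modify the family so that the special fibre is semistable. Two small corrections: the hypothesis $\deg(P)=\dim X$ is used not to control torsion or Hilbert polynomials of the extension but to guarantee that the pure special fibre satisfies $\dim(D\cap \mathrm{Supp})<\dim \mathrm{Supp}$, so that twisting by $-D$ remains injective on the special fibre and the extended filtration is genuinely a flat family of parabolic structures; and the separatedness issue you flag as the main obstacle is moot, since $\msp_X^\mathrm{ss}(\tau_p)$ is already quasi-projective, hence separated, so only existence of the lift needs to be checked.
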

\begin{proof}
Since $\msp_X^\mathrm{ss}(\tau_p)$ is quasi-projective, we only need to show that it is proper
over $\mathrm{Spec}(\Bbbk)$. For this, let $R$ be a discrete valuation ring with quotient 
field $L$ and residue filed $\Bbbk$. Let $x\colon \mathrm{Spec}(L) \ra X^\mathrm{ss}(\tau_p)$.
Then, there exist a field extension $L'$ of $L$ and $L'$-valued point $x'$ of $Q^\mathrm{[ss]}$
such that the following diagram 
\[
\xymatrix{
\mathrm{Spec}(L') \ar[r]^{x'} \ar[d] & Q^\mathrm{[ss]} \ar[d]^\pi \\
\mathrm{Spec}(L) \ar[r]_{x} & \msp_X^\mathrm{ss}(\tau_p)
}
\]
commutes. Let $\psheaf{E}' = x'^*\mathbb{F}_\bullet$, where $\mathbb{F}_\bullet = 
\Psi^\vee(\iota^* \mathbb{M}_\bullet)$ is the tautological family of parabolic 
semistable sheaves having parabolic type $\tau_P$ on $Q^\mathrm{[ss]}$. 
Let $R'$ be a discrete valuation ring dominating $R$ with quotient filed $L'$. 
Let $\sheaf{F}$ be a coherent $\struct{X_{R'}}$-submodule 
of $j_*\sheaf{E}'$ such that $j^*\sheaf{F} = \sheaf{E}'$, where
$j\colon X_L \ra X_{R'}$ (cf. \cite[Proposition 6]{La75}). 
By \cite[Exercise 2.B.2]{HL97}, there is a 
subsheaf $\sheaf{F}'\subset \sheaf{F}$ such that $j^*\sheaf{F}' = j^*\sheaf{F} 
= \sheaf{E}'$ and $i^*\sheaf{F}'$ is pure, where $i\colon X_{\Bbbk}\ra X_{R'}$ 
is a closed immersion. 
By properness of the Quot
scheme, there exists a unique subsheaf $\sheaf{F}'_\alpha$ of $\sheaf{F}'$ 
such that $j^*\sheaf{F}'_\alpha = \sheaf{E}'_\alpha$ and 
$\sheaf{F}'/\sheaf{F}'_\alpha$ is flat over $R'$ for each $0\leq \alpha \leq 1$. 
Since $i^*\sheaf{F}'$ is pure and hence using the assumption on $\tau_p$, we get 
$\dim(\mathrm{Supp}(i^*(\sheaf{F}')) \cap D) < \dim(\mathrm{Supp}(i^*(\sheaf{F}'))$. Therefore, the map 
$i^*(\sheaf{F}'\otimes_{\struct{X_R'}} \struct{X_R'}(-D_{R'}))\ra i^*\sheaf{F}'$ 
is injective, and hence 
$\sheaf{F}'/(\sheaf{F}'\otimes_{\struct{X_R'}} \struct{X_R'}(-D_{R'}))$ is flat 
over $R'$.

By Proposition \ref{s5-prop-extn}, we get a flat family 
$\psheaf{E}''$ of semistable parabolic sheaves on $X$ over $\mathrm{Spec}(R')$ 
such that $j^*\psheaf{E}'' = \psheaf{E}'$. The corresponding classifying map 
$x'_{R'}\colon \mathrm{Spec}(R')\ra \msp_X^\mathrm{ss}(\tau_p)$
is the lift of $x'$. The rest of the proof follows the identical arguments 
as in the proof of \cite[Proposition 6.6]{AK07}.
\end{proof}

\section{Moduli stacks and embedding}\label{sec-stack-embedding}
Recall that a quasi-parabolic sheaf on a scheme $X$ is called a sheaf of fixed type if each terms of the 
associated graded sheaf has fixed Hilbert polynomial. It is called $n$-regular if each of the sub-sheaves 
in the filtration is $n$-regular. We can check that the quasi-parabolic sheaf is $n$-regular if each 
subquotients are $n$-regular. We will say that the parabolic sheaf is $n$-regular, if the underlying 
quasi-parabolic sheaf is $n$-regular.

\begin{definition} \rm{
The moduli stack of $n$-regular parabolic sheaves is a category fibered in groupoids defined as,
$$P_X : \mfs_X^\mathrm{reg}(\tau_P) \to \Sch/\mathbb K ; \mfs^\mathrm{reg}_X(\tau_P) (T) \mapsto T$$
where $\mfs_X^\mathrm{reg}(\tau_P) (T)$ is the groupoid of all flat families of $n$-regular parabolic sheaves 
of fixed type parametrised by the scheme $T$ and morphism is given by isomorphism of families of 
quasi-parabolic sheaves. The morphisms of the category $\mfs_X^{reg}(\tau_P) $ are pairs 
$(f : T' \to T, \phi)$, similar to the example described in \cite[Example 2.16]{Go01}. 
}
\end{definition}

There are also two open moduli sub-stacks of the moduli stack $ \mfs_X^\mathrm{reg}$ determined by the 
notion of parabolic stability condition. The moduli stack of semistable (respectively, stable) parabolic 
sheaves will be denoted as $ \mfs_X^{\mathrm{ss}}(\tau_P)$ (respectively, $ \mfs_X^{\mathrm{s}}(\tau_P)$).

\begin{remark}\rm{
Using the boundedness result of Schl\"uter \cite[Theorem 4.5.2]{Sc11}, originally due to Maruyama and 
Yokogawa, and its extension by Inaba in some special cases, we can conclude that semistable and stable 
sub-stacks are algebraic stacks with respect to fppf topology on $\Sch$. Moreover, we get a canonical 
maps $Q^{[ss]} \ra \mfs_X^{\mathrm{ss}}(\tau_P)$  and $Q^{[s]} \ra \mfs_X^{\mathrm{s}}(\tau_P)$
}
\end{remark}

We will denote by $\mathbf M_X^{\mathrm{ss}}(\tau_P)$ (respectively, $\mathbf M_X^{\mathrm{s}}(\tau_P)$) 
the coarse moduli space of semistable (respectively, stable) parabolic moduli stack (see Theorem \ref{main_thm:FMC}). 
Similar to the parabolic case, we can also define the moduli stack of representations of fixed dimension vector.
 
\begin{definition}\rm{
The moduli stack of filtered representations is a category fibered in groupoids defined as,
$$ P_A : \mfs_{\mathrm{fil}}(\dv) \to \Sch/\mathbb K;  \mfs_{\mathrm{fil}}(\dv) (T) \mapsto T$$
where $\mfs_{\mathrm{fil}}(\dv) (T)$ is the groupoid of all flat families of filtered modules of fixed dimension 
vector parametrised by the scheme $T$ and morphism is given by isomorphism of families of modules. The 
morphism in this groupoid can be described as pair $(f : T' \to T, \phi : M' \xrightarrow{\simeq} M)$, where 
$\phi$ is an isomorphism of two flat families of filtered modules $M$ and $M'$. 
}
\end{definition}

Now, using the notion of stability via slope function, we get two open sub-stacks of $\mfs_{\mathrm{fil}}(\dv)$, 
denoted as $\mffs (\dv)$ and $\mfs_{\mathrm{fil}}^{\mathrm{s}}(\dv)$.  We will also denote by 
$\mathbf M^{\mathrm{ss}}_{\mathrm{fil}}(\dv)$ (respectively, $\mathbf M^{\mathrm{s}}_{\mathrm{fil}}(\dv)$) 
the coarse moduli space of semistable (respectively, stable)  filtered moduli stack. 

Now, we can lift the embedding of $\mathbf M_X^{\mathrm{ss}}(\tau_P)$ in $\mathbf M^{\mathrm{ss}}_{\mathrm{fil}}(\dv)$ 
as $1$-morphisms at the level of stacks \cite[\href{https://stacks.math.columbia.edu/tag/02XS}{Definition 02XS}]{stacks-project}.
 
\begin{proposition} \label{prop:stacky_adjunction}
There exists a morphism of moduli stacks
 $
 \Psi :  \mfs_X^{reg}(\tau_P) \to \mfs_{\mathrm{fil}}(\dv), 
 $
 i.e. $P_A \circ \Psi = P_X$. 
 Moreover, $\Psi$ is fully-faithful functor.
 \end{proposition}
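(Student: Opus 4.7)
The plan is to define $\Psi$ at the fibered-category level by applying the families-level embedding of Proposition \ref{s2-prop-eqiv-familiy} fibrewise over the base, and then to deduce full faithfulness directly from the adjunction $(\Psi^\vee, \Psi)$ constructed there, whose unit and counit are isomorphisms on the essential image.

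For the construction, given a $\Bbbk$-scheme $S$, send $\sheaf{E}_\bullet \in \mfs_X^\mathrm{reg}(\tau_P)(S)$ to $\Psi(\sheaf{E}_\bullet) \in \mfs_\mathrm{fil}(\dv)(S)$ via the functor of Proposition \ref{s2-prop-eqiv-familiy}. The parabolic type $\tau_P$ together with $n$-regularity pins down the dimension vector of each geometric fibre, whose entries are the values at $n$ and $m$ of the Hilbert polynomials of the $F_i(\sheaf{E}_s)$, so the target does land in $\mfs_\mathrm{fil}(\dv)(S)$ for the appropriate $\dv$. A morphism $(f\colon S'\to S,\, \phi\colon \sheaf{E}'_\bullet \xrightarrow{\sim} f^*\sheaf{E}_\bullet)$ is sent to $(f, \Psi(\phi))$; well-definedness requires that $\Psi$ commute with base change along $f$, which follows from $n$-regularity (higher direct images along $X_S\to S$ vanish, so the relative $\Hom$ construction commutes with arbitrary base change, and isomorphisms of families map to isomorphisms). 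The identity $P_A \circ \Psi = P_X$ is then tautological from the construction.

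For full faithfulness, fix $S$ and two families $\sheaf{E}_\bullet, \sheaf{F}_\bullet \in \mfs_X^\mathrm{reg}(\tau_P)(S)$. By Proposition \ref{s2-prop-eqiv-familiy} both lie in the subcategory $\mathcal{C}_S$ on which $\Psi$ restricts to an equivalence with quasi-inverse $\Psi^\vee$, the counit $\varepsilon\colon \Psi^\vee\Psi \xrightarrow{\sim} \mathrm{id}$ being an isomorphism. Given any isomorphism $\varphi\colon \Psi(\sheaf{E}_\bullet)\xrightarrow{\sim} \Psi(\sheaf{F}_\bullet)$ in $\mfs_\mathrm{fil}(\dv)(S)$, the assignment
\[
\varphi \;\mapsto\; \varepsilon_{\sheaf{F}_\bullet} \circ \Psi^\vee(\varphi) \circ \varepsilon_{\sheaf{E}_\bullet}^{-1}
\]
is a two-sided inverse to $\Psi$ on morphism sets, by naturality of $\varepsilon$ together with its invertibility; this yields simultaneously fullness and faithfulness of the induced functor between the groupoids $\mfs_X^\mathrm{reg}(\tau_P)(S)$ and $\mfs_\mathrm{fil}(\dv)(S)$, for every $S$.

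The main obstacle I anticipate is cleanly handling base-change compatibility for non-flat $f\colon S'\to S$, so that the natural map $\Psi(f^*\sheaf{E}_\bullet)\to f^*\Psi(\sheaf{E}_\bullet)$ is an isomorphism of flat families of filtered Kronecker modules on $S'$ (and so that composing two such morphisms goes to the composition on the other side). This is resolved by the $n$-regularity hypothesis on each $F_i(\sheaf{E}_\bullet)$: it gives $R^j\pi_{S*}(F_i(\sheaf{E}_\bullet)(k)) = 0$ for $j>0$ and $k\in\{n,m\}$ along $\pi_S\colon X_S\to S$, so cohomology-and-base-change reduces everything to the fibrewise statement already established in Section \ref{sec-embedding}.
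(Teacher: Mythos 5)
Your proposal is correct and follows essentially the same route as the paper: the functor is built fibrewise from Proposition \ref{s2-prop-eqiv-familiy}, base-change compatibility is supplied by $n$-regularity, and full faithfulness is checked on fibre groupoids (the paper delegates this last reduction to a Stacks project lemma, while you verify it directly via the counit isomorphism). Your handling of non-flat base change via cohomology-and-base-change is in fact slightly more careful than the paper's appeal to flat base change.
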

 \begin{proof}
By Proposition \ref{s2-prop-eqiv-familiy},  we have an embedding of the category of flat familiy of 
$n$-regular parabolic sheaves having parabolic type $\tau_p$ into the category of flat family of filtered 
Kronecker modules having dimension vector $\dv$, where $\dv$ is determined by the fixed parabolic 
type $\tau_p$.

If we have any flat family of parabolic sheaves on $X \times S$ over a scheme $S$, then we get a 
filtered locally free sheaf on $S$,using regularity, which has fiberwise $A$-module structure. 
This defines a functor at the object level, which will be denoted by the same symbol $\Psi$, 
similar to \cite{AK07}.

If we have an isomorphism of flat families of parabolic sheaves over a fixed base scheme, then it will also
induce an isomorphism of flat families of filtered Kronecker modules over the same base scheme. Hence,
using the functoriality, we can extend the functor to an embedding of the groupoid 
$$\Psi_S :  \mfs_X^\mathrm{reg}(\tau_P)(S) \to  \mfs_{\mathrm{fil}}(\dv) (S); E \mapsto \Psi_S(E). $$
 
If we have a morphism between the base schemes $f : S \to S'$, then we can get a functor 
$(f \times 1)^* :  \mfs_X^\mathrm{reg}(\tau_P)(S') \to  \mfs_X^\mathrm{reg}(\tau_P)(S)$. Similar map exists between the 
fiber groupoids at module side.
 
Using the fact that cohomology commutes with flat base change, we can get the morphism
between moduli stacks $\Psi :  \mfs_X^\mathrm{reg}(\tau_P) \to  \mfs_{\mathrm{fil}}(\dv) $.
Now, the result \cite[\href{https://stacks.math.columbia.edu/tag/003Z}{Lemma 003Z}]{stacks-project} 
proves that the functor $\Psi$ is fully faithful.
\end{proof}
 
  \begin{lemma} \label{lemma:stacky_FM}
 There is a commutative diagram with horizontal maps being locally closed immersion on topological space of closed points
 $$
 \xymatrix{
  \mathscr M_X^{\mathrm{ss}}(\tau_P)   \ar[r] \ar[d] & \mffs (\dv) \ar[d] \\
    \mathbf M_X^{\mathrm{ss}}(\tau_P) \ar[r]  & \mathbf M_{\mathrm{fil}}^{\mathrm{ss}} (\dv)
 }
 $$
 and there is a similar diagram with stable in place of semistable loci and lower horizontal arrow is locally closed 
 scheme theoretic embedding. 
 \end{lemma}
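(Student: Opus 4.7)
The plan is to assemble the diagram from pieces that are already essentially present in the paper. For the top horizontal arrow, I would restrict the morphism of stacks $\Psi : \mathscr M_X^{\mathrm{reg}}(\tau_P) \to \mathscr M_{\mathrm{fil}}(\dv)$ constructed in Proposition \ref{prop:stacky_adjunction} to the open semistable substacks on both sides. What must be checked is that this restriction is well-defined, i.e.\ that $\Psi$ carries families of semistable parabolic sheaves of type $\tau_p$ to families of $\theta$-semistable filtered Kronecker modules of dimension vector $\dv$. On geometric fibres this is precisely Theorem \ref{ss-thm-1}, and semistability is an open condition in a flat family, so the restriction $\Psi^{\mathrm{ss}} : \mathscr M_X^{\mathrm{ss}}(\tau_P) \to \mathscr M_{\mathrm{fil}}^{\mathrm{ss}}(\dv)$ is a well-defined $1$-morphism, and it remains fully faithful as a functor between the associated groupoids.

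Next, I would obtain the bottom horizontal arrow together with commutativity of the square via the universal property of the coarse moduli spaces $\msp_X^{\mathrm{ss}}(\tau_p)$ and $\msf(\dv)$, which were constructed in Theorem \ref{main_thm:FMC}. Composing the coarse moduli map of the target stack with $\Psi^{\mathrm{ss}}$ gives a $1$-morphism from $\mathscr M_X^{\mathrm{ss}}(\tau_P)$ to the scheme $\msf(\dv)$, and this factors uniquely through the coarse moduli $\msp_X^{\mathrm{ss}}(\tau_p)$. This produces the scheme morphism $\varphi : \msp_X^{\mathrm{ss}}(\tau_p) \to \msf(\dv)$ making the square commute. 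In fact, the explicit description of $\varphi$ given in the last diagram of the proof of Theorem \ref{main_thm:FMC}, together with the proof of Proposition \ref{prop:stacky_adjunction}, shows that this $\varphi$ coincides with the one already obtained from the good quotients of $Q^{\mathrm{[ss]}}$ and $\rsf^{\mathrm{ss}}$.

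For the claim about the horizontal maps on the space of closed points, I would invoke Proposition \ref{s2-prop-stratification} to recall that $\iota : Q = (\rsf)^{\mathrm{reg}}_{\tau_p} \hookrightarrow \rsf$ is a locally closed immersion of schemes; intersecting with the semistable loci, $\iota^{\mathrm{ss}} : Q^{\mathrm{[ss]}} \hookrightarrow \rsf^{\mathrm{ss}}$ is also a locally closed $G$-equivariant immersion. On the stacky level this gives the locally closed immersion $\mathscr M_X^{\mathrm{ss}}(\tau_P) \hookrightarrow \mathscr M_{\mathrm{fil}}^{\mathrm{ss}}(\dv)$, using the quotient-stack presentations $[\,Q^{\mathrm{[ss]}}/G\,]$ and $[\,\rsf^{\mathrm{ss}}/G\,]$ provided by the local isomorphisms of Section \ref{sec-construction}. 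At the level of closed points of the coarse moduli spaces, Theorem \ref{main_thm:FMC} already shows that $\varphi$ is injective on $\Bbbk$-points, and the image consists exactly of the closed $G$-orbits in $\pi(Q^{\mathrm{[ss]}})$; combined with the local-closedness of $Q^{\mathrm{[ss]}}$ in $\rsf^{\mathrm{ss}}$ and the fact that good quotients commute with taking $G$-invariant locally closed subsets of closed-orbit type, this yields the locally closed embedding on the topological space of closed points.

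For the stable variant, I would restrict everything to the open stable loci $Q^{\mathrm{[s]}} \subset Q^{\mathrm{[ss]}}$ and $\rsf^{\mathrm{s}} \subset \rsf^{\mathrm{ss}}$, where the good quotients become geometric quotients because every stable orbit is closed in its $\theta$-semistable locus. Since each $S$-equivalence class of a stable parabolic sheaf is a single isomorphism class (Proposition \ref{ss-prop-2}) and $\Psi$ is fully faithful, the induced map $\mathbf M_X^{\mathrm{s}}(\tau_P) \to \mathbf M_{\mathrm{fil}}^{\mathrm{s}}(\dv)$ is injective at the level of $\Bbbk$-points, and because it is obtained by descending a locally closed $G$-equivariant immersion between geometric quotients it is automatically a locally closed immersion of schemes. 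The main obstacle, and the place where I would spend the most care, is precisely the last point: upgrading the set-theoretic injectivity of $\varphi$ on closed points to a locally closed immersion requires checking that the image of $Q^{\mathrm{[ss]}}$ in $\msf(\dv)$ is a locally closed subset with the quotient scheme structure, which in the semistable case is only clear on closed points while in the stable case the geometric-quotient hypothesis allows one to promote it to a scheme-theoretic statement.
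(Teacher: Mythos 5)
Your proposal is correct and follows essentially the same route as the paper: the top arrow is the restriction of $\Psi$ to the semistable substacks, the bottom arrow is the map $\varphi$ descended to coarse moduli via Theorem \ref{main_thm:FMC}, the closed-point statement comes from the locally closed $G$-equivariant immersion $Q^{[\mathrm{ss}]}\hookrightarrow \rsf^{\mathrm{ss}}$ together with the fact that both good quotients identify the same $S$-equivalence classes, and the stable case is upgraded to a scheme-theoretic embedding using that $\rsf^{\mathrm{s}}\to \mathbf{M}^{\mathrm{s}}_{\mathrm{fil}}(\dv)$ is a principal bundle (geometric quotient). The obstacle you flag at the end—that in the semistable case the embedding is only controlled on closed points while the stable case admits a genuine scheme-theoretic statement—is exactly the distinction the paper draws.
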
 
 \begin{proof}
 The morphism of stack on top is obtained by restricting the functor $\Psi$. 
 We will use the same notations for the category fibered in sets given by the moduli functors of last section. 
 Hence, we get the following canonical commutative diagram 
 $$
 \xymatrix{
  \mathscr M_X^{\mathrm{ss}}(\tau_P)   \ar[r] \ar[d] & \mffs (\dv) \ar[d] \\
\mf(\tau_p)_X^\mathrm{ss}   \ar[r]  & \mf_{\mathrm{fil}}^{\mathrm{ss}} (\dv)
}
$$
Now, using the commutative square coming from the Theorem \ref{main_thm:FMC}, we get the required 
commutative square. 
Since both $Q^{[\mathrm{ss}]} \ra \mathbf M_X^{\mathrm{ss}}(\tau_P)$  and 
$R^{\mathrm{ss}}_{\mathrm{fil}}(\dv) \ra \mathbf M_{\mathrm{fil}}^{\mathrm{ss}} (\dv)$ are good quotients 
with the same $S$-equivlalence classes, the last assertion on closed points follows. 

We can get the commutative square on stable locus by similar arguement. Since $Q^{[\mathrm{s}]}$ is 
closed equivariant subscheme of $R^{\mathrm{s}}_{\mathrm{fil}}(\dv)$ which is a principal bundle over 
$M_{\mathrm{fil}}^{\mathrm{s}} (\dv)$, we get the required scheme theoretic embedding 
(see \cite[Proposition 6.7]{AK07}).
\end{proof}  
\begin{proposition} \label{prop:stacky_square}
 There is a commutative diagram of $1$-morphisms between the stacks i.e. following diagram which commutes 
 up to natural isomorphism (or $2$-morphism)
 $$
 \xymatrix{ 
  [Q^{[\mathrm{ss}]}/G(\KAl)] \ar[r]^{\tilde{\mathbf{\iota}}}  \ar[d]^{\tilde{\mathbf{g}}} & [R^{\mathrm{ss}}_{\mathrm{fil}} / G(\KAl)] \ar[d]^{\tilde{\mathbf{h}}}  \\
  \mathscr M_X^{\mathrm{ss}}(\tau_P)   \ar[r]^{\Psi}  & \mffs (\dv) 
 }
 $$ 
 such that the vertical morphisms are isomrophisms. A similar assertion holds for the stable in place of semistable.
 \end{proposition}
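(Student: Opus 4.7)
The plan is to upgrade the commutative diagram of Lemma \ref{lemma:stacky_FM} into a commutative square of algebraic stacks whose vertical arrows are equivalences. First, I would construct the right-hand morphism $\tilde{\mathbf{h}}$ by descent: the tautological family $\mathbb{M}_\bullet$ on $R^{\mathrm{ss}}_{\mathrm{fil}}$ carries a natural $G(\KAl)$-linearization (via the equivariance noted in Remark \ref{remark:well-definedness}), so that sending a $G(\KAl)$-torsor $P\to S$ together with an equivariant map $P \to R^{\mathrm{ss}}_{\mathrm{fil}}$ to the associated descended flat family of filtered Kronecker modules on $S$ defines a $1$-morphism $\tilde{\mathbf{h}}\colon [R^{\mathrm{ss}}_{\mathrm{fil}}/G(\KAl)]\to \mffs(\dv)$. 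Analogously, $\tilde{\mathbf{g}}$ is defined using the tautological parabolic family $\mathbb{F}_\bullet = \Psi^\vee(\iota^*\mathbb{M}_\bullet)$ on $Q^{[\mathrm{ss}]}$, which inherits a $G(\KAl)$-equivariant structure via functoriality of $\Psi^\vee$.

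The commutativity of the square up to a canonical $2$-isomorphism follows tautologically from the isomorphism $\Psi(\mathbb{F}_\bullet) \cong \iota^*\mathbb{M}_\bullet$ supplied by Proposition \ref{s2-prop-stratification}(1) (the unit of the adjunction being an isomorphism on $Q^{[\mathrm{ss}]}$), which is $G(\KAl)$-equivariant because both $\Psi$ and $\Psi^\vee$ are functors. Applying $\Psi$ to any family produced by $\tilde{\mathbf{g}}$ therefore reproduces the family obtained by first pulling back along $\tilde{\mathbf{\iota}}$ and then descending via $\tilde{\mathbf{h}}$, yielding the natural isomorphism $\Psi\circ \tilde{\mathbf{g}} \cong \tilde{\mathbf{h}}\circ \tilde{\mathbf{\iota}}$.

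The main task is to show that the vertical arrows are equivalences. For $\tilde{\mathbf{h}}$, I would argue that every flat family $\family{N}_\bullet$ of semistable filtered Kronecker modules over a base $S$ is, at each vertex of $\KAl$, a locally free $\struct{S}$-module of the prescribed rank, and hence Zariski-locally trivializable; the resulting local frames yield maps $S_i \to R^{\mathrm{ss}}_{\mathrm{fil}}$ whose transition functions assemble into a $G(\KAl)$-torsor exhibiting $\family{N}_\bullet$ as the descent of $\mathbb{M}_\bullet$. For $\tilde{\mathbf{g}}$, a flat family $\psheaf{E}$ of $n$-regular parabolic sheaves of type $\tau_p$ is carried by $\Psi$ to an object of the subcategory $\mathcal{C}_S$ of Proposition \ref{s2-prop-eqiv-familiy} whose local trivializations factor through $Q^{[\mathrm{ss}]}$ by Proposition \ref{s2-prop-stratification}(2), and $\Psi^\vee$ recovers $\psheaf{E}$ from this data. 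The hardest part will be verifying that isomorphisms of flat families of parabolic sheaves over $S$ correspond bijectively to isomorphisms of $G(\KAl)$-torsors compatible with the classifying maps to $Q^{[\mathrm{ss}]}$; this fully-faithfulness should follow from the equivalence established in Proposition \ref{s2-prop-eqiv-familiy} applied on each overlap together with the cocycle compatibility coming from the fact that $\Psi^\vee$ commutes with base change on $S$. The stable case is handled by restricting every piece of the argument to the stable loci, with no other modification.
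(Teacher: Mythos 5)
Your proposal is correct and follows essentially the same route as the paper: the vertical arrows are induced by the (equivariant) tautological families, the commutativity $2$-isomorphism is the unit of the adjunction via Proposition \ref{s2-prop-stratification}(1), and the stable case is handled by restriction. The only difference is presentational -- where you prove by hand (via Zariski-local trivialization of the locally free pieces, which suffices since $G(\KAl)$ is a product of general linear groups, hence special) that the vertical maps are equivalences, the paper simply cites \cite[Proposition 3.3]{Go01}, which packages exactly that local-universal-family argument.
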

 \begin{proof}
 Using the canonical inclusion of $Q^{[\mathrm{ss}]} \hookrightarrow R^{\mathrm{ss}}$, we get the horizontal 
 $1$-morphism of stacks. The bottom horizontal map is defined using the previous Proposition 
 \ref{prop:stacky_adjunction}. The vertical morphisms are defined using the local universal family of modules, 
 exactly similar to the maps $\tilde{g}$ and $\tilde{h}$, say $\tilde{\mathbf{g}}$ and $\tilde{\mathbf{h}}$, 
 respectively. The vertical maps are isomorphism follow from \cite[Proposition 3.3]{Go01}.
 
 We can check that the square commutes up to natural isomorphism given by the unit of the adjunction. 
 The assertion for the stable case follows using similar arguments.
 \end{proof}
 Now we get the following assertion from the description given in \cite[Example 2.29]{Go01} and properties of 
 GIT quotient from the Mumford's GIT.
 
 \begin{lemma} \label{lemma:GIT_square} 
 There exists a pullback diagram with vertical maps being canonical map to coarse moduli spaces with 
 horizontal maps being (set-theoretic) immerisions between stacks and schemes respectively.
 $$
 \xymatrix{
   [Q^{[\mathrm{ss}]}/G(\KAl)] \ar[r]  \ar[d] & [R^{\mathrm{ss}}_{\mathrm{fil}} / G(\KAl)] \ar[d] \\
       Q^{[\mathrm{ss}]} \git G  \ar@{^{(}->}[r]^j  & R^{\mathrm{ss}}_{\mathrm{fil}} \git G  
 }
 $$ 
 and there is a similar diagram for stable locus with the lower horizontal map being scheme theoretic immersion. 
 \end{lemma}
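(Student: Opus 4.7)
The plan is to construct each arrow of the diagram, verify that the square commutes, and then check the universal property of the fiber product. The top horizontal $1$-morphism is induced by the $G(\KAl)$-equivariant locally closed immersion $Q^{[\mathrm{ss}]}\hookrightarrow \rsf^{\mathrm{ss}}$ from Proposition~\ref{s2-prop-stratification}, which descends automatically to a $1$-morphism of the associated quotient stacks. The vertical arrows are the tautological coarse-moduli morphisms from a GIT quotient stack to its good quotient, as described in~\cite[Example 2.29]{Go01}: on the right this is the canonical map $[\rsf^{\mathrm{ss}}/G(\KAl)] \to \msf(\dv)$ from~\cite{AD20}, and on the left the good quotient $Q^{[\mathrm{ss}]}\git G = \msp_X^\mathrm{ss}(\tau_p)$ was constructed in the proof of Theorem~\ref{main_thm:FMC}.

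Next I would construct the bottom arrow $j$ and show it is a set-theoretic locally closed immersion. The essential input is Proposition~\ref{ss-prop-3}: for any semistable $\psheaf{E}$ whose image point lies in $Q^{[\mathrm{ss}]}$, the unique closed $G$-orbit contained in the closure of the orbit of $\Psi(\psheaf{E})$ inside $\rsf^{\mathrm{ss}}$ is precisely the orbit of $\Psi(\mathrm{gr}(\psheaf{E}))$, and since $\mathrm{gr}(\psheaf{E})$ is again a semistable parabolic sheaf of parabolic type $\tau_p$, that orbit lies in $Q^{[\mathrm{ss}]}$. This says exactly that $Q^{[\mathrm{ss}]}$ is \emph{saturated} with respect to the good quotient $\pi\colon \rsf^{\mathrm{ss}}\to \msf(\dv)$, i.e.\ $\pi^{-1}(\pi(Q^{[\mathrm{ss}]})) = Q^{[\mathrm{ss}]}$. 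The set-theoretic injectivity of $j$ on closed points is then the content of the morphism $\varphi$ at the end of the proof of Theorem~\ref{main_thm:FMC}.

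With saturation in hand, the Cartesian property can be checked on $T$-valued objects: a point of the fiber product consists of a principal $G(\KAl)$-bundle $P\to T$ together with an equivariant morphism $P\to \rsf^{\mathrm{ss}}$ whose induced map $T\to \msf(\dv)$ factors through $Q^{[\mathrm{ss}]}\git G$, and saturation forces the equivariant morphism itself to factor through $Q^{[\mathrm{ss}]}$, producing the required object of $[Q^{[\mathrm{ss}]}/G(\KAl)](T)$. For the stable locus, $\rsf^{\mathrm{s}}\to \mstf(\dv)$ is a principal $G$-bundle by~\cite[Proposition~6.7]{AK07}, so the closed equivariant inclusion $Q^{[\mathrm{s}]}\hookrightarrow \rsf^{\mathrm{s}}$ descends to a genuine scheme-theoretic locally closed immersion between the GIT quotients, upgrading the lower arrow as claimed. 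The main obstacle I anticipate is the careful verification of saturation; once that is secured, everything else follows formally from GIT together with the compatibility with Jordan--H\"older filtrations provided by Proposition~\ref{ss-prop-3}.
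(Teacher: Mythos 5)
Your construction of the arrows and your identification of saturation as the crux of the Cartesian property are on target (the paper itself gives no argument beyond a citation to G\'omez's description of the coarse-moduli map and ``properties of GIT''), but the step where you deduce saturation is where the proof breaks. What Proposition \ref{ss-prop-3} (via the proof of Theorem \ref{main_thm:FMC}) actually gives is that for every orbit $O(M_\bullet)\subset Q^{[\mathrm{ss}]}$, the unique closed orbit in $\overline{O(M_\bullet)}\cap Z$, where $Z=\overline{Q^{[\mathrm{ss}]}}\cap \rsf^{\mathrm{ss}}$, is again the orbit of $\Psi(\mathrm{gr}(\psheaf{E}))\in Q^{[\mathrm{ss}]}$. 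That is saturation of $Q^{[\mathrm{ss}]}$ \emph{inside $Z$}, which is exactly what is needed for the good quotient $Q^{[\mathrm{ss}]}\to Q^{[\mathrm{ss}]}\git G$ to exist. It is not the statement $\pi^{-1}(\pi(Q^{[\mathrm{ss}]}))=Q^{[\mathrm{ss}]}$ with $\pi\colon \rsf^{\mathrm{ss}}\to \rsf^{\mathrm{ss}}\git G$: that would require the \emph{converse} implication, namely that every $\theta$-semistable filtered Kronecker module $N_\bullet$ in $\rsf^{\mathrm{ss}}$ whose Jordan--H\"older graded object is isomorphic to $\Psi(\mathrm{gr}(\psheaf{E}))$ already lies in $Q^{[\mathrm{ss}]}$, i.e.\ is itself of the form $\Psi(\psheaf{F})$ with $\eta_{N_\bullet}$ an isomorphism. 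This amounts to $\Psi$ being full on extension classes between semistable objects of equal slope, which is not established anywhere in the paper and is dubious in general: an extension of $\Psi(\psheaf{E}_2)$ by $\Psi(\psheaf{E}_1)$ in the module category need not come from an extension of parabolic sheaves, yet it is semistable of the correct dimension vector and is $S^{\mathrm{JH}}$-equivalent to a point of $Q^{[\mathrm{ss}]}$. So your $T$-valued-point verification of the fiber-product property, which rests on full saturation, does not go through as written.

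What survives, and what the ``(set-theoretic)'' qualifier in the statement is presumably meant to capture, is the Cartesian property on closed points: the closed orbits of $\rsf^{\mathrm{ss}}$ lying over $\pi(Q^{[\mathrm{ss}]})$ are precisely the orbits of polystable objects $\Psi(\mathrm{gr}(\psheaf{E}))$, and these do lie in $Q^{[\mathrm{ss}]}$ by Proposition \ref{ss-prop-3} together with the injectivity of $\varphi$ from Theorem \ref{main_thm:FMC}. If you restrict your saturation claim and the fiber-product check to closed orbits, the argument closes. Your treatment of the stable locus is correct as stated: there each fiber of $\rsf^{\mathrm{s}}\to \mstf(\dv)$ is a single orbit, so the $G$-invariant subscheme $Q^{[\mathrm{s}]}$ is automatically the full preimage of its image and the scheme-theoretic conclusion follows from the principal-bundle property.
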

 
 \begin{proposition}
 There is a commutative diagram, up to natural isomorphism, where vertical maps are canonical map to 
 coarse moduli spaces
  $$
 \xymatrix{
    &  \mathscr M_X^{\mathrm{ss}}(\tau_P)  \ar[rr] \ar[dd] & & \mffs (\dv) \ar[dd]   & \\
  [Q^{[\mathrm{ss}]}/G(\KAl)]  \ar[ur]^{\simeq} \ar[dd] \ar[rr] & & [R^{\mathrm{ss}}_{\mathrm{fil}} / G(\KAl)] \ar[ur]^{\simeq} \ar[dd] & & \\
    & \mathbf M_X^{\mathrm{ss}}(\tau_P) \ar[rr]  & & \mathbf M_{\mathrm{fil}}^{\mathrm{ss}} (\dv) & \\
    Q^{[\mathrm{ss}]} \git G  \ar[ur] \ar[rr] & & R^{\mathrm{ss}}_{\mathrm{fil}} \git G  \ar[ur] &  & 
 }
 $$
 We get the similar diagram for stable locus with the vertical maps in bottom square being isomorphisms.
 \end{proposition}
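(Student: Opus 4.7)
The plan is to assemble the cube from the three squares already proved in the preceding results. The front square (two coarse moduli spaces with the embedding at the bottom) is the bottom square of Lemma \ref{lemma:stacky_FM}. The back square (the two parabolic/filtered moduli stacks) is the top square of Proposition \ref{prop:stacky_square}. The left and right squares (stack quotient mapping down to the GIT quotient) are exactly Lemma \ref{lemma:GIT_square}. The two remaining faces are the diagonal faces comparing $[Q^{[\mathrm{ss}]}/G(\KAl)]$ with $\mfs_X^\mathrm{ss}(\tau_P)$ over their coarse moduli, and similarly on the filtered side; each of these commutes because, by Proposition \ref{prop:stacky_square}, the top diagonal map $\tilde{\mathbf{g}}$ (resp.\ $\tilde{\mathbf{h}}$) is a stack isomorphism, and composing with the canonical map to the coarse moduli space agrees with the canonical map $Q^{[\mathrm{ss}]}\git G\to \msp_X^\mathrm{ss}(\tau_P)$ (resp.\ $\rsf^\mathrm{ss}\git G\to \msf(\dv)$) by the universal property of corepresentation established in Theorem \ref{main_thm:FMC}.

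Concretely, I would first fix the $2$-morphisms: from Proposition \ref{prop:stacky_square}, the back face commutes via a unit-of-adjunction $2$-isomorphism; the left and right faces of Lemma \ref{lemma:GIT_square} are strictly commutative Cartesian squares; the bottom face of Lemma \ref{lemma:stacky_FM} commutes on the nose. Then, using that coarse moduli spaces are initial among schemes receiving a morphism from a given stack, I would verify that the two possible ways of going from $[Q^{[\mathrm{ss}]}/G(\KAl)]$ to $\msf(\dv)$ in the cube agree up to a canonical $2$-isomorphism; this reduces to comparing the local universal families on $Q^{[\mathrm{ss}]}$ obtained by (i) pulling back from the parabolic stack and then pushing via $\Psi$, versus (ii) pushing to $[R^\mathrm{ss}_\mathrm{fil}/G(\KAl)]$ and then applying the canonical map to the filtered stack. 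These two families agree up to isomorphism by the construction of $\Psi$ as a left Kan extension of $\Hom[X]{T}{-}$ along the group action (Remark \ref{remark:well-definedness}) and by Proposition \ref{s2-prop-eqiv-familiy}.

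For the stable case, the same assembly works, but now the vertical maps of the bottom square become isomorphisms: this is because on the stable locus, $Q^{[\mathrm{s}]}\subset R^\mathrm{s}_\mathrm{fil}$ is a closed $G$-invariant subscheme where $G$ acts with trivial stabilizers (the scalar group $\Delta$ having been divided out), so the GIT quotient $Q^{[\mathrm{s}]}\git G$ is a geometric quotient and in fact a principal $G$-bundle over $\msp_X^\mathrm{s}(\tau_P)$. Combined with the fact that $\tilde{\mathbf{g}}$ and $\tilde{\mathbf{h}}$ are already stack isomorphisms, one concludes that $\mfs_X^\mathrm{s}(\tau_P)\to \msp_X^\mathrm{s}(\tau_P)$ and $\mffs(\dv)\to \msf(\dv)$ coincide with the stable analogues of the vertical arrows in Lemma \ref{lemma:GIT_square}, which are themselves geometric quotients and hence identify with their coarse moduli.

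The main obstacle I expect is purely bookkeeping of $2$-morphisms: one must check that the various unit-of-adjunction isomorphisms that realize the $2$-commutativity of the back face are compatible with the strict commutativity of the bottom face through the vertical stack-to-scheme maps. This is routine once one observes that all corepresentation maps are universal, so any two $2$-morphisms between the two composites $[Q^{[\mathrm{ss}]}/G(\KAl)]\rightrightarrows \msf(\dv)$ into a scheme must coincide. Thus the cube commutes, and the argument for the stable locus is formally identical, with the additional strengthening on the vertical arrows of the bottom square.
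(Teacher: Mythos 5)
Your proposal is correct and takes essentially the same approach as the paper: the cube is assembled face-by-face from Proposition \ref{prop:stacky_square} (top), Theorem \ref{main_thm:FMC} (bottom), Lemma \ref{lemma:GIT_square} and Lemma \ref{lemma:stacky_FM} (front and back), with the two remaining faces commuting by the universal property of the canonical maps to the coarse moduli spaces. Your additional bookkeeping of $2$-morphisms and the principal-bundle argument in the stable case merely make explicit what the paper leaves to ``similar arguments.''
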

 \begin{proof} 
 The two vertical squares come from the canonical maps coming from the definition of coarse moduli spaces 
 of respective stacks.
 Now, using previous Proposition \ref{prop:stacky_square}, we get that the top horizontal square will commute 
 up to natural isomorphism and the bottom horizontal square commutes using the Theorem \ref{main_thm:FMC}.

 Next, the front vertical square commutes using the Lemma \ref{lemma:GIT_square} and the back vertical square 
 commutes using the Lemma \ref{lemma:stacky_FM}. 
 The commutative diagram for the stable in place of semistable follows from similar arguments.
 \end{proof}
 
\begin{remark}\rm{
In the stable case, the front square in the above diagram becomes isomorphic to the square in the background, 
and hence we can deduce properties of the morphism between moduli stacks and coarse moduli spaces in 
the background square using the properties of morphism between quotient stacks diagram.
}
\end{remark}

\subsection*{Acknowledgement} 
We thank V. Balaji for asking the question of functorial moduli construction for 
parabolic sheaves and D. S. Nagaraj and N. Raghavendra for discussions and comments.
The first-named author would like to acknowledge the support of SERB-DST (India) 
grant number MTR/2018/000475. The second name author would like to acknowledge the support 
of DAE and DST INSPIRE grant number IFA 13-MA-25. We thank HRI and IIT Gandhinagar for providing great infrastructure.

We also thank the anonymous referee for helpful comments and corrections to improve the results.


\end{document}